\documentclass{article} 
\usepackage{amssymb,amsmath,color,graphicx,a4}

\def\eins{\mbox{1\hskip-0.24em l}}
\def\T{^{\sf T}}
\def\mT{^{\sf -T}}
\newcommand{\C}{ {\mathbb C} }
\newcommand{\N}{ {\mathbb N} }
\newcommand{\R}{ {\mathbb R} }
\newcommand{\MM}{{\mathbb M}}

\newcommand{\CC}{{\cal C}}

\newcommand{\PP}{{\cal P}}
\newcommand{\QQ}{{\cal Q}}
\newcommand{\Rc}{{\cal R}}
\newcommand{\Sc}{{\cal S}}

\newcommand{\mynegspace}{\hspace{-0.12em}}
\newcommand{\ltnorm}{\rvert\mynegspace\rvert\mynegspace\rvert}
\newcommand{\rtnorm}{\rvert\mynegspace\rvert\mynegspace\rvert}
\newcommand{\diag}{\,\mbox{diag}}
\newcommand{\bldiag}{\,\mbox{blockdiag}}
\newcommand{\imag}{{\bf i}}

\newcommand{\cc}{{\bf c}}
\newcommand{\gdw}{\,\iff\,}

\newcommand{\weg}[1]{}
\newcommand{\qed}{\qquad\mbox{$\square$}}

\newtheorem{remark}{Remark}[section]
\newtheorem{theorem}{Theorem}[section]
\newtheorem{lemma}{Lemma}[section]

\begin{document}
\title{Stiff Norm Estimates\\ for Variable-Stepsize Peer Two-Step Methods}
\author{Jens Lang \\
{\small \it Technical University Darmstadt,
Department of Mathematics} \\
{\small \it Dolivostra{\ss}e 15, 64293 Darmstadt, Germany}\\
{\small lang@mathematik.tu-darmstadt.de} \\ \\
Bernhard A. Schmitt \\
{\small \it Philipps-Universit\"at Marburg,
Department of Mathematics and Computer Science,}\\
{\small \it Hans-Meerwein-Stra{\ss}e 6, 35043 Marburg, Germany} \\
{\small schmitt@mathematik.uni-marburg.de}}
\maketitle
\begin{abstract}
Peer two-step methods are attractive for the numerical solution of stiff initial value problems because they combine favorable features of Runge–Kutta and multi-step methods, including high-order accuracy, strong stability properties, and the avoidance of order reduction. This paper develops rigorous norm estimates for the propagation operators of variable-stepsize diagonally-implicit $L(\alpha)$-stable Peer methods applied to stiff semi-linear systems. The analysis relies on a class of Peer methods whose propagation matrices possess a grid-independent left eigenvector, enabling the construction of suitable matrix norms for entire coefficient families. We show that this class contains $L(\alpha)$-stable methods of stage order $q=s$ and super-convergence of order $s+1$ with $s$ stages, exceeding the classical stage-order barrier $q\le 2$ of irreducible $s$-stage diagonally-implicit Runge-Kutta methods, and construct such methods up to order 5. Achieving high stage order is one of the principal advantages of Peer methods, especially for stiff differential equations where it helps mitigate order reduction. In addition, Runge-Kutta-type starting procedures of matching local order and efficient diagonally implicit iteration schemes are constructed. Unlike previous approaches based on a posteriori computation of joint spectral radii, the proposed framework exploits the large parameter space of Peer two-step methods to derive stability estimates a priori. The resulting theory provides uniform stability bounds for stiff semi-linear problems on variable grids and establishes a foundation for further applications to optimal control and related boundary value problems. Comparative numerical results are presented for three classical stiff benchmark problems, including three novel Peer two-step methods of order $3$, $4$, $5$, and 
four singly diagonally implicit Runge-Kutta methods with a first explicit step of stage order $2$ and order $3$, $4$, $5$, and $6$.
\end{abstract}

\noindent{\em Key words.} Implicit Peer two-step methods, stiff norm estimates for matrix families, variable stepsizes
\section{Introduction}
Peer methods have been introduced in \cite{SchmittWeiner2004} in linearly-implicit form.
In work by different authors during the last 20 years, they have shown their efficiency also as explicit, implicit-explicit or time-parallel methods, e.g. \cite{ContePaganoPaternoster2023, GerischLangPodhaiskyWeiner2009, KulikovWeiner2015, MontijanoPodhaiskyRandezCalvo2019, PodhaiskyWeinerSchmitt2005, SchmittWeinerBeck2013, SchmittWeinerErdmann2005, SchmittWeinerPodhaisky2005, SchneiderLangWeiner2021, SchroederGerischLang2017, SharifiAbdiHojjati2024}.
Recently, we have applied Peer two-step methods also to optimal control problems \cite{LangSchmitt2026a}, where additional adjoint order conditions have to be satisfied.
In stiff initial value problems, Peer two-step methods are especially attractive combining favorable properties of Runge-Kutta and multi-step methods.
They may have good stability properties including A- and L-stability \cite{MontijanoPodhaiskyRandezCalvo2019,SchmittWeiner2017}, and avoid order reduction for very stiff problems.
However, there appears to be a lack of algebraic tools for rigorously proving norm-contractive stability estimates for $A(\alpha)$- or $L(\alpha)$-stable Peer methods on general grids in stiff problems.
In the present paper, we will provide norm estimates for the propagation operator of $L(\alpha)$-stable Peer methods in time which may be applied to stiff semi-linear problems of the form
\begin{align}\label{semilin}
 y'(t)=f\big(y(t)\big),\,y(0)=y_0, \quad f(y):=Jy+g\big(y\big),
\end{align}
where $J=X\Lambda X^{-1}\in\R^{m\times m}$ is a diagonalizable matrix with eigenvalues in the left complex half-plane and $g:\,\R^n\to\R^n$ is a function with moderate Lipschitz constant $L_g$.
The exact solution of \eqref{semilin} is denoted as $y^\star$.
On variable-stepsize grids, due to their two-step form, the Peer coefficients depend on stepsize ratios and it is demanding to construct one fixed norm applicable to whole matrix families.
Here we need a severe restriction by assuming that a certain left eigenvector of the propagation matrix is grid-independent.
This may seem to be a severe restriction, but a very attractive class of Peer methods with this property has been proposed recently in the context of control problems in \cite{LangSchmitt2025a}.
Methods from this class have not been discussed so far for pure initial value problems and we will show that there  exist $L(\alpha)$-stable methods of even higher order $s+1$ if $s$ is the number of stages.
For all Peer methods, we also construct Runge-Kutta-type starting methods having the appropriate {\em local} order $s+1$.
Since such methods have full coefficient matrices, we also provide fast iteration methods in diagonally-implicit form in order to make the presentation self-contained.
\par
Rigorous stability estimates for multi-step or multi-value methods in variable-stepsize mode are rare.
Here, stability requires that long products $B_n\cdots B_k$, $k<n$, of matrices are uniformly bounded.
This means that the {\em joint spectral radius} of the matrix family $\{B_k\}$ is bounded by one.
However, computation of this radius seems to be an unsolved problem for general matrix families \cite{BlondelNesterov2010}. Guglielmi and Zennaro attacked this problem in \cite{GuglielmiZennaro2001}
for the zero-stability of the multi-step method BDF3 by computing the joint spectral radius and an appropriate polytope norm.
This approach was also pursued in \cite[Section 6]{JackiewiczPodhaiskyWeiner2004} for a two-stage two-step W-method (not a Peer method).
However, we do not follow this a-posteriori approach for existing methods.
Instead we exploit the large number of parameters in Peer two-step methods by combining an a-priori construction of weight matrices for usual matrix norms with the design of the Peer methods itself.
This construction will not be restricted to zero-stability but will apply to stiff semi-linear problems \eqref{semilin} and to corresponding boundary value problems \eqref{AWPyp}-\eqref{ADglyp} arising in ODE constrained optimal control problems.
\par
Peer two-step methods for the numerical solution of \eqref{semilin} have the general redundant form
\begin{align}\label{Peerva}
A_nY_n=&\;B_n Y_{n-1}+h_n K_n F(Y_n),\ n\ge 1,
\end{align}
which is equivalent to
\begin{align}\label{Peerivp}
Y_n=&\;\bar B_nY_{n-1}+h_n\bar K_nF(Y_{n}),\ n\ge 1,
\end{align}
with $\bar B_n=A_n^{-1}B_n$ and $\bar K_n=A_n^{-1}K_n$.
Here $A_n\in\R^{s\times s}$ is a 
nonsingular lower triangular matrix and $K_n\in\R^{s\times s}$ is a diagonal matrix 
with strictly positive entries in order to restrict the size
of the stage equations to $m$ unknowns.
For a shorter notation, a symbol 
like $A_n$ is also understood as its Kronecker product $A_n\otimes I_m$.
Further, $\{t_0,t_1,\ldots,t_{N+1}\}\subseteq[0,T]$ is a grid with stepsizes $h_n=t_{n+1}-t_n$, $n=0,\ldots,N$.
The stage solutions $Y_n=(Y_{ni})_{i=1}^s\in\R^{sm}$ are approximations
of $(y(t_n+c_ih_n))_{i=1}^s$, where all $s$ stages have equal accuracy.
Also, $F(Y_n)=(f(Y_{ni}))_{i=1}^s$ is used. When variable stepsize grids are used, coefficient matrices have to depend on the stepsize ratios $\sigma_n=h_n/h_{n-1},\,n=1,\ldots,N$, which is indicated by the index $n$ in \eqref{Peerva}, \eqref{Peerivp}.
\par
In the context of adjoint Peer methods for optimal control problems, where $A_n$ will be lower triangular and $K_n$ a diagonal matrix, the redundant form \eqref{Peerva} provides more degrees of freedom for the adjoint ODE.
However, for pure initial value problems, the form (\ref{Peerivp}) is often used and in this case $\bar K_n$ will also be lower triangular.
For the stability analysis, we will work with the simplified version \eqref{Peerivp} for pure initial problems.
\par
The paper is organized as follows: In Section \ref{SNorm}, a priori norm estimates for the stability matrix are derived. Three novel Peer two-step methods together with starting methods are constructed in Section \ref{SIPMeth} and the details of the corresponding stability proofs are presented. In Sections \ref{GerrIVP} and \ref{SAPOC}, the global error for pure initial value problems and boundary value problems in optimal control are considered, respectively. Results of numerical experiments are
presented and discussed in Section \ref{NumExp}. The paper concludes with a conclusion in Section \ref{Concl}.
Coefficient sets of novel Peer two-step methods are given in an Appendix A.
\section{Norm estimates for the stability matrix}\label{SNorm}
For a diagonalizable matrix $J$, the stiff part $y'=Jy$ of \eqref{semilin} may be separated into distinct Dahlquist test equations $y'=\lambda y$.
For test equations, the time step \eqref{Peerivp} of the Peer method reduces to $Y_n=\Rc_n(z_n)Y_{n-1}$ with the stability matrix
\begin{align}\label{Stfnz}
 \Rc_n(z)=&\;(I-z\bar K_n)^{-1}\bar B_n,\ z=z_n:=h_n\lambda,\ n\ge1.
\end{align}
A well-established stability concept associated with the test equation is $A(\alpha)$-stability on uniform grids for multi-step type methods.
In this case, the coefficient matrices will not depend on the index $n$ and $A(\alpha)$-stability is usually defined as
\begin{align}\label{Aalpha}
 \varrho\big(\Rc(z)\big)<1\mbox{ for }z\in\C,\ |\arg(-z)|<\alpha,
\end{align}
where $\varrho(\cdot)$ denotes the spectral radius.
Since $\bar K_n$ is non-singular by assumption, we have $\Rc(\infty)=0$ in \eqref{Stfnz} and such methods are also $L(\alpha)$-stable.
However, the spectral radius is not a norm and useless for products of different matrices.
For a rigorous analysis of the global error in the context of stiff initial value problems, estimates for some norm $\|\Rc_n(z)\|$ are required with $z$ in an unbounded part of the left complex half-plane.
Since this norm has to be the same in all time steps, it must not depend on the index $n$.
Also, for $L(\alpha)$-stable methods, bounds may be preferable which show error damping for large eigenvalues as in $\|\Rc(z)\|\to0\,(z\to\infty)$.
Since the form of these norm estimates may be influenced by certain transformations of the coefficient matrices which will be discussed later on, we rather consider the prototype matrix function
\begin{align*}
 \Rc(z)=&\;(I-z\Gamma)^{-1}M,\ z\in\C.
\end{align*}
In the original time step \eqref{Peerivp}, we have $\Gamma=\bar K_n=A_n^{-1}K_n$ and $M=\bar B_n=A_n^{-1}B_n$.
However, additional transformations may be applied to $\bar K_n,\bar B_n$ in order to obtain precise bounds.
Pre-consistency of the scheme requires that $A\eins=B\eins$ or equivalently $M\phi=\phi$ with some transformed eigenvector $\phi$.
Also the scheme \eqref{Peerivp} has to be zero-stable and one may assume that there exists some multiplicative norm such that $\|M\|=1$.
In this case, a simple sufficient estimate for unbounded $z$ is well known by use of the {\em logarithmic norm} $\mu(\Gamma):=\lim_{h\to0}(\|I+h\Gamma\|-1)/h$ subordinate to a given norm \cite{Bauer1962}.
\begin{lemma}
Let $\|M\|=1$, $z\in\C$, and $\mu(z \Gamma)<1$.
Then,
\begin{align}\label{ProdBd}
 \|\Rc(z)\|\le\frac1{1-\mu(z\Gamma)}.
\end{align}
\end{lemma}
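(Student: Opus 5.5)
Since $\|M\|=1$ and the norm is multiplicative, we have $\|\Rc(z)\|\le\|(I-z\Gamma)^{-1}\|\,\|M\|=\|(I-z\Gamma)^{-1}\|$. It therefore suffices to prove that $I-z\Gamma$ is invertible and that
\begin{align*}
 \|(I-z\Gamma)^{-1}\|\le\frac1{1-\mu(z\Gamma)}.
\end{align*}
The tool is the standard lower bound of the logarithmic norm, applied to $A:=-z\Gamma$: for every vector $x$ and every matrix $A$,
\begin{align*}
 \|(I+A)x\|\ge\big(1-\mu(-A)\big)\|x\|.
\end{align*}

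I would derive this inequality directly from the definition $\mu(C)=\lim_{h\to0^+}(\|I+hC\|-1)/h$. Take $h\in(0,1)$ and rewrite
\begin{align*}
 x=\frac1{1-h}\big((I-hA)x+h(I+A)x\big)-\frac{h^2}{1-h}Ax.
\end{align*}
This is a linear identity in $x$ and is checked by expanding the right-hand side. Taking norms gives
\begin{align*}
 (1-h)\|x\|\le\|I-hA\|\,\|x\|+h\|(I+A)x\|+O(h^2)\|x\|.
\end{align*}
Rearranging, we get
\begin{align*}
 h\|(I+A)x\|\ge\big(1-h-\|I+hC\|\big)\|x\|-O(h^2)\|x\|,\qquad C=-A=z\Gamma.
\end{align*}
Now divide by $h$ and write
\begin{align*}
 -\frac{\|I+hC\|-1}{h}-1,
\end{align*}
which is the form the coefficient of $\|x\|$ takes after dividing. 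Letting $h\to0^+$ yields $\|(I-z\Gamma)x\|\ge(1-\mu(z\Gamma))\|x\|$.

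A cleaner route to the same inequality is the triangle inequality with a scaling parameter. For $h>0$,
\begin{align*}
 \|x\| &=\big\|(I+hC)x-h(C-I)x\big\|\\
 &\le\|I+hC\|\,\|x\|+h\|(I-C)x\|.
\end{align*}
Hence $\|(I-C)x\|\ge\big(1-(\|I+hC\|-1)/h\big)\|x\|$, and letting $h\to0^+$ gives $\|(I-C)x\|\ge(1-\mu(C))\|x\|$. This is precisely the bound we need, and I would use this version.

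Since $\mu(z\Gamma)<1$, the lower bound shows that $I-z\Gamma$ is injective, hence invertible because it is a square matrix. Substituting $x=(I-z\Gamma)^{-1}y$ gives $\|(I-z\Gamma)^{-1}y\|\le\|y\|/(1-\mu(z\Gamma))$. Combined with the first step, this yields \eqref{ProdBd}.

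The only delicate point is the limit defining $\mu$. It exists because $h\mapsto\|I+hC\|$ is convex, so the difference quotient is monotone in $h$. This monotonicity also shows that the bound holds for each fixed $h>0$ with $\mu$ replaced by the difference quotient, so the limit can be taken safely.
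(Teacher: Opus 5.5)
Your route is the paper's. There the proof is the single line $\|\Rc(z)\|\le\|M\|\,\|(I-z\Gamma)^{-1}\|\le 1/(1-\mu(z\Gamma))$, with the resolvent bound simply quoted from Bauer. You instead try to derive that bound from the definition of $\mu$, and you note explicitly that $\mu(z\Gamma)<1$ forces $I-z\Gamma$ to be invertible. That is a reasonable addition, but both of your written derivations contain an algebraic error.

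Your first identity is false. Since $(I-hA)x+h(I+A)x=(1+h)x$, your right-hand side equals $\big((1+h)x-h^2Ax\big)/(1-h)$, not $x$. Even granting the resulting inequality, the coefficient $-\frac{\|I+hC\|-1}{h}-1$ tends to $-1-\mu(C)$, not $1-\mu(C)$. So the claimed conclusion does not follow from what you wrote, and that route should be deleted.

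In the version you say you would use, the first line is also wrong. Since $(I+hC)-h(C-I)=(1+h)I$, the left-hand side must be $(1+h)\|x\|$, not $\|x\|$. This matters: from $\|x\|\le\|I+hC\|\,\|x\|+h\|(I-C)x\|$ you would only obtain $\|(I-C)x\|\ge-\frac{\|I+hC\|-1}{h}\|x\|$, i.e.\ $-\mu(C)$ in the limit. The leading $1$ in $1-\mu(C)$ comes precisely from the extra $h\|x\|$.

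The fix is to start from $(1+h)\|x\|\le\|I+hC\|\,\|x\|+h\|(I-C)x\|$ and divide by $h$. This gives exactly the inequality $\|(I-C)x\|\ge\big(1-(\|I+hC\|-1)/h\big)\|x\|$ that you state. The rest of your argument is then correct: the one-sided limit exists by convexity of $h\mapsto\|I+hC\|$, injectivity gives invertibility, and you substitute $x=(I-z\Gamma)^{-1}y$. This requires the matrix norm to be the operator norm induced by the vector norm, which is what ``subordinate'' means in the paper.
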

{\bf Proof} $\|\Rc(z)\|\le\|M\|\|(I-z\Gamma)^{-1}\|\le 1/(1-\mu(z\Gamma))$ \cite{Bauer1962}.
\qed\\
\par\noindent
For $z\in\R$, $z<0$, we may set $\xi=-z>0$ and since $\mu(-\xi\Gamma)=\xi\mu(-\Gamma)$, the estimate \eqref{ProdBd} shows the desired behavior $\|\Rc(z)\|\to0\,(z\to-\infty)$ for $\mu(-\Gamma)<0$.
Unfortunately, the assumption $\mu(-\Gamma)<0$ is rarely satisfied for the coefficients of practical Peer methods and we have to investigate the interplay between the matrices $M$ and $\Gamma$ in $\Rc(z)$ in more detail.
Since the spectral matrix norm offers more analytic techniques than other norms, we will use this matrix norm $\|\cdot\|_2$ from now on.
For this norm, the assumption $\mu(-\Gamma)<0$ means that the symmetric part of $\Gamma$ is positive definite, which is rarely the case for methods of interest.
The following reformulation, similar to \cite{Schmitt1983}, will be the basis of our approach using the notation $y^\ast=\bar y\T$ for complex vectors.
\begin{lemma}\label{Lsubs}
Let $M,\Gamma\in\R^{s\times s}$, $z\in\C$, and $I-z\Gamma$ be nonsingular.
Then
\begin{align}\label{MNen}
 \|\Rc(z)\|_2^2=&\;\max_{y\not=0}\frac{\|M\T y\|_2^2}{\|y\|_2^2-2\Re(z y^\ast \Gamma y)+|z|^2\|\Gamma\T y\|_2^2},
\end{align}
for all $z\in\C,\,y\in\C^s$, with non-vanishing denominator.
\end{lemma}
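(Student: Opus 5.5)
The identity follows from $\|X\|_2=\|X^\ast\|_2$ together with the substitution $x=(I-z\Gamma)^{\ast}y$. Since $M$ and $\Gamma$ are real, the adjoint of $\Rc(z)=(I-z\Gamma)^{-1}M$ is
$$\Rc(z)^\ast=M\T(I-\bar z\Gamma\T)^{-1}.$$
Hence
$$\|\Rc(z)\|_2^2=\max_{x\not=0}\frac{\|M\T(I-\bar z\Gamma\T)^{-1}x\|_2^2}{\|x\|_2^2}.$$

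Because $I-z\Gamma$ is nonsingular, so is its adjoint $I-\bar z\Gamma\T$. The map $y\mapsto x=(I-\bar z\Gamma\T)y$ is therefore a bijection of $\C^s\setminus\{0\}$ onto itself, and the maximum over $x\not=0$ equals the maximum over $y\not=0$. After the substitution the numerator is simply $\|M\T y\|_2^2$, and the denominator is $\|(I-\bar z\Gamma\T)y\|_2^2$.

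Next we expand the denominator:
$$\|y-\bar z\Gamma\T y\|_2^2=\|y\|_2^2-2\Re\big(\bar z\,y^\ast\Gamma\T y\big)+|z|^2\|\Gamma\T y\|_2^2.$$
Two facts bring the middle term into the stated form:
\begin{itemize}
\item Since $\Gamma$ is real, $y^\ast\Gamma\T y=\overline{y^\ast\Gamma y}$.
\item Hence $\bar z\,y^\ast\Gamma\T y=\overline{z\,y^\ast\Gamma y}$, and taking real parts gives $\Re\big(\bar z\,y^\ast\Gamma\T y\big)=\Re\big(z\,y^\ast\Gamma y\big)$.
\end{itemize}
This yields exactly the denominator in \eqref{MNen}. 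The denominator is $\|(I-\bar z\Gamma\T)y\|_2^2>0$ for every $y\not=0$, so the non-vanishing requirement in the statement holds automatically under the nonsingularity assumption.

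The only delicate point is this conjugation bookkeeping in the cross term; everything else is the standard invariance of the spectral norm under adjoints plus a change of variables.
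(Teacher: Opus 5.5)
Your proof is correct and follows the same route as the paper. Both pass to the adjoint $\Rc(z)^\ast=M\T(I-\bar z\Gamma\T)^{-1}$, substitute $x=(I-\bar z\Gamma\T)y$ and expand the denominator, and you additionally spell out the conjugation step $\Re(\bar z\,y^\ast\Gamma\T y)=\Re(z\,y^\ast\Gamma y)$ that the paper leaves implicit.
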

{\bf Proof}
Since $\|\Rc(z)\|_2=\|\Rc(z)^\ast\|_2$ and setting $x=(I-\bar z\Gamma\T) y$, we get that
\begin{align*}
 \|\Rc(z)^\ast\|^2=&\;\max_{x\not=0}\frac{\|M\T(I-\bar z\Gamma\T)^{-1}x\|_2^2}{\|x\|_2^2}
 =\max_{y\not=0}\frac{\|M\T y\|_2^2}{\|(I-\bar z\Gamma\T)y\|_2^2}.
\end{align*}
The assertion uses a different representation of the denominator. 
\qed\\
\par\noindent
The quadratic form $y^\ast \Gamma y$ in \eqref{MNen} gives rise to the discussion of the {\em numerical range} or {\em field of values} of the matrix $\Gamma\in\C^{s\times s}$,
\begin{align*}
 {\cal F}_\Gamma:=\{ y^* \Gamma y:\, y\in\C^s,\,\|y\|_2=1\}\subset\C.
\end{align*}
Some well-known properties of this set are \cite{Bauer1962}:\\
\hspace*{1ex}\begin{tabular}{ll}
 a)& ${\cal F}_\Gamma$ is a compact convex set containing all eigenvalues of $\Gamma$.\\
 b)& If $\Gamma$ is a normal matrix, ${\cal F}_\Gamma$ is the convex hull of its eigenvalues.\\
 c)& For real $\Gamma$, it holds that $\mu_2(\Gamma)=\max\{\Re z:\,z\in{\cal F}_\Gamma\}=\lambda_{\max}(\Gamma_S)$,
 where $\Gamma_S:=\frac12(\Gamma+\Gamma\T)$\\& is the symmetric part of $\Gamma$.
\end{tabular}\\{}
\par\noindent
In order to start the discussion, we consider the case of real $z=-\xi\le0$ first having in mind a bound of the form
\begin{align}\label{ziels}
 \frac{\|M\T y\|_2^2}{\|y\|_2^2+2\xi y\T \Gamma_S y+\xi^2\|\Gamma\T y\|_2^2}
 \le\frac1{(1+\xi\nu)^2},\quad\mbox{for all } y\in\R^s,\,y\not=0,
\end{align}
with $\nu>0$.
If $\Gamma$ is nonsingular, the left-hand side of \eqref{ziels} approaches zero for $|\xi|\to\infty$ and the inequality leads to the restriction
\begin{align}\label{nuinfty}
 \nu\le\nu_{\infty}:=1/\|\Gamma^{-1}M\|_2.
\end{align}
In our context, the matrices $M,\Gamma$ will be transformed versions and similar to the coefficient matrices $\bar B_n\sim M$ and $\bar K_n\sim \Gamma$, and both will be related by order conditions.
We always will assume that $\Gamma$ is nonsingular, but $B$ may sometimes be not.
Since $\bar B_n\eins=\eins$ due to pre-consistency, the stability matrix $M$ possesses a left eigenvector $v\T=v\T M$.
Considering the estimate \eqref{ziels}, two different situations depending on the choice of $y$ may come up:
\begin{enumerate}
\item
For $y\cong v$ the norm starts with $\|M\T y\|\cong1$ at $\xi=0$ and $y\T \Gamma y>0$ is required in order to stay below 1 for $\xi>0$.
\item
If, however, the norm on the left starts with $\|M\T y\|\ll1$ at $\xi=0$,  it may grow for $\xi>0$ to some extent allowing for negative values $y\T \Gamma y<0$ before the quadratic term $\xi^2\|\Gamma\T y\|_2^2>0$ in the denominator has a damping effect again.
\end{enumerate}
Since the situation near the dominant eigenvalue one of $\Rc_n(z)$ at $z=0$ is the most critical one, in the following discussion we will construct a fixed similarity transformation for all $\bar B_n$ which separates this eigenvalue for all $\Rc_n(0)$ from a southeast block $M_{se}\in\R^{(s-1)\times(s-1)}$ such that the transformed matrix $M$ has the form
\begin{align}\label{Mblk}
 M=\begin{pmatrix}1&0\T\\0&M_{se} \end{pmatrix},\quad \|M_{se}\|_2<1.
\end{align}
Since this transformation has to be applied to long products
\begin{align}\label{SFProd}
 \Rc_n(z_n)\Rc_{n-1}(z_{n-1})\cdots\Rc_k(z_k),\quad n>k,
\end{align}
it is essential that this transformation is the same for all factors \eqref{Stfnz} and does not depend on the grid index $n$.
This requires that the dominant left eigenvector of $\bar B_n$ is independent of the stepsize ratio which is a severe restriction on the Peer method.
This requirement is fulfilled for adjoint Peer methods due to adjoint order conditions but in general not for Peer methods for initial value problems.
Therefore we will also discuss the construction of such methods in Section~\ref{SIPMeth} below.
After transformation to the form \eqref{Mblk}, both the left and right eigenvectors of $M$ are equal to $e_1$ and in view of the case number~1 above we will need to show that $e_1\T \Gamma e_1>0$.
\par
Before proceeding with our main discussion, we will shortly describe an alternative approach which may also be of interest.
It will be restricted to  proving the simple estimate
\begin{align}\label{Norm1}
 \|\Rc(-\xi)\T\|_2=\|M\T(I+\xi \Gamma)\mT\|_2\le 1,\quad z=-\xi\in\R,\,\xi\ge0,
\end{align}
for nonsingular $\Gamma$.
\begin{remark}
By Lemma~\ref{Lsubs} condition \eqref{Norm1} is equivalent with requiring that
\begin{align}\label{DefBed0}
 0\le&\;y\T(I-MM\T)y+2\xi y\T \Gamma_Sy+\xi^2 y\T \Gamma\Gamma\T y\mbox{ for all }\xi\ge0,\,y\in\R^s,
\end{align}
with the symmetric part $\Gamma_S:=\frac12(\Gamma+\Gamma\T)$ of $\Gamma$.
We observe that for any fixed $y\in\R^n$ the right hand side is a quadratic polynomial in $\xi$.
For $\|M\|_2=1$, condition \eqref{DefBed0} is trivially satisfied if $y\T \Gamma_Sy\ge0$.
In the opposite case $y\T \Gamma_Sy<0$, the minimum value of the right-hand side is computed easily and non-negativity of the minimum amounts to 
\begin{align*}
 y\T(I-MM\T)y\ge \frac{(y\T \Gamma_Sy)^2}{y\T \Gamma\Gamma\T y}.
\end{align*}
Setting $x:=\Gamma\T y$, we reformulate this condition as
\begin{align*}
  \|x\|_2^2x\T \Gamma^{-1}(I-MM\T)\Gamma\mT x\ge (x\T \Gamma^{-1} \Gamma_S\Gamma\mT x)^2,\mbox{ when }x\T \Gamma^{-1} \Gamma_S\Gamma\mT x<0.
\end{align*}
Normalizing $\|x\|_2=1$, taking square roots and reminding the considered case with the negative sign of $x\T \Gamma^{-1}\Gamma_S\Gamma\mT x=x\T\big((\Gamma^{-1})_S\big)x$, we obtain the following sufficient condition for \eqref{Norm1}, where the variable $\xi$ has been eliminated:
\eqref{Norm1} holds if
\begin{align*}
  x\T \Gamma^{-1} \Gamma_S\Gamma\mT x+\sqrt{x\T \Gamma^{-1}(I-MM\T)\Gamma\mT x}\ge 0\mbox{ for all }x\in\R^s,\,\|x\|_2=1.
\end{align*}
Unfortunately, the left hand side is not a convex map in $x$ and we see no obvious way to make mathematically rigorous decisions here. 
\end{remark}
Considering generalizations of the estimate \eqref{ziels} for complex values of $z$, a choice for the form of the upper bound is required.
The behavior of the exponential functions suggests the use of $1/(1-\nu\Re z)$.
However, we would like to apply the maximum principle in proving the bound and rather consider estimates of the form
\begin{align}\label{NDaempf}
 \|\Rc(z)\|_2=\|(I-z\Gamma)^{-1}M\|_2\le\frac1{|1-\nu z|},\quad \nu\ge0,
\end{align}
with $z$ from an appropriate part of the complex plane containing the negative real axis, at least.
Obviously, \eqref{NDaempf} is equivalent with the requirement that
\begin{align*}
 |(1-\nu z)y^\ast (I-z\Gamma)^{-1}Mx|\le1\quad\forall x,y\in\C^s,\,\|x\|_2=\|y\|_2=1,
\end{align*}
where each scalar map $z\mapsto (1-\nu z)y^\ast (I-z\Gamma)^{-1}Mx$ is holomorphic in the left complex halfplane since $\Gamma\sim\bar K$ has positive real eigenvalues.
We now present a simple criterion for \eqref{NDaempf} to hold.
The definiteness assumption in the following lemma for $\xi>0$ is a bit stronger than needed, but it has the advantage that it may be verified more easily by the Sylvester criterion.
\begin{lemma}\label{LOmdefin}
The estimate \eqref{NDaempf} holds on some complex ray $z=-\xi(1+\imag\eta)$, $\xi\ge0$, with fixed $\eta\in\R$, where $I-z\Gamma$ is nonsingular, if  $\|M\|_2\le1$ and if the Hermitean matrix
\begin{align}\label{Omdef}
 \Omega(\xi):=I-MM\T+2\xi(\Gamma_S+\imag\eta \Gamma_A-\nu MM\T) +\xi^2(1+\eta^2)\big(\Gamma\Gamma\T-\nu^2MM\T\big)
\end{align}
is positive definite for all $\xi>0$.
Here, $\Gamma_S=\frac12(\Gamma+\Gamma\T)$ and $\Gamma_A=\frac12(\Gamma-\Gamma\T)$ are the symmetric and anti-symmetric parts of $\Gamma$, respectively.
A consequence of this assumption is that $\nu<\nu_\infty$, see \eqref{nuinfty}.
\end{lemma}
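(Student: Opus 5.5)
The plan is to reduce \eqref{NDaempf} to the quadratic-form characterization of Lemma~\ref{Lsubs}. Squaring \eqref{NDaempf} and using \eqref{MNen}, the estimate is equivalent to
\begin{align*}
 |1-\nu z|^2\,\|M\T y\|_2^2\le \|y\|_2^2-2\Re(z\,y^\ast\Gamma y)+|z|^2\|\Gamma\T y\|_2^2\quad\mbox{for all } y\in\C^s .
\end{align*}
The right-hand side is $\|(I-\bar z\Gamma\T)y\|_2^2$, which is positive for $y\not=0$ since $I-z\Gamma$ is nonsingular. So it suffices to show that the difference of the two sides equals $y^\ast\Omega(\xi)y$ when $z=-\xi(1+\imag\eta)$.

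The key step is the following bookkeeping, which I would carry out term by term.

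\textbf{The factor $|1-\nu z|^2$.} It equals $(1+\nu\xi)^2+\nu^2\xi^2\eta^2=1+2\nu\xi+\nu^2\xi^2(1+\eta^2)$. This produces the terms $-MM\T$, $-2\xi\nu MM\T$ and $-\xi^2(1+\eta^2)\nu^2MM\T$ in $\Omega$.

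\textbf{The factor $|z|^2$.} It equals $\xi^2(1+\eta^2)$, giving the term $\xi^2(1+\eta^2)\Gamma\Gamma\T$.

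\textbf{The mixed term.} Split $y^\ast\Gamma y=a+\imag b$ with $a=y^\ast\Gamma_Sy\in\R$ and $\imag b=y^\ast\Gamma_Ay$. The latter is purely imaginary because $\Gamma_A$ is real skew-symmetric. Then
\begin{align*}
 -2\Re(z\,y^\ast\Gamma y)=2\xi\Re\big((1+\imag\eta)(a+\imag b)\big)=2\xi(a-\eta b),
\end{align*}
and $a-\eta b=y^\ast(\Gamma_S+\imag\eta\Gamma_A)y$ since $y^\ast(\imag\eta\Gamma_A)y=\imag\eta\cdot\imag b=-\eta b$. Note that $\imag\Gamma_A$ is Hermitian, so $\Omega(\xi)$ is Hermitian, as claimed in the statement.

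Collecting these terms gives exactly the right-hand side minus the left-hand side, namely $y^\ast\Omega(\xi)y$. For $\xi>0$ this is nonnegative by the positive definiteness assumption. At $\xi=0$ we have $\Omega(0)=I-MM\T$, which is positive semidefinite because $\|M\|_2\le1$. Hence the ratio in \eqref{MNen} is bounded by $1/|1-\nu z|^2$ along the whole ray, which proves \eqref{NDaempf}.

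For the consequence $\nu<\nu_\infty$, divide $y^\ast\Omega(\xi)y>0$ by $\xi^2(1+\eta^2)$ and let $\xi\to\infty$. This gives $\|\Gamma\T y\|_2^2\ge\nu^2\|M\T y\|_2^2$ for all $y$. Substituting $y=\Gamma\mT x$ yields $\nu\|M\T\Gamma\mT\|_2\le1$, i.e.\ $\nu\le1/\|\Gamma^{-1}M\|_2=\nu_\infty$.

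I expect the main obstacle to be upgrading this to the strict inequality. My first attempt would be to argue by contradiction. Suppose $\nu=\nu_\infty$, and pick $y\not=0$ with $\|\Gamma\T y\|_2=\nu\|M\T y\|_2$. Then $y^\ast\Omega(\xi)y$ is only affine in $\xi$, so positivity for all $\xi>0$ forces the coefficient of $\xi$ to be nonnegative. I would then try to combine this with a perturbation of $y$ in the direction that makes the quadratic coefficient slightly negative, to produce a $\xi$ at which the form vanishes or turns negative. If this does not work in general, I would state the consequence as $\nu\le\nu_\infty$, with strict inequality under the additional nondegeneracy that this constraint is not attained.
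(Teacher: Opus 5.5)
Your derivation of \eqref{NDaempf} is exactly the paper's proof. You rewrite the bound via Lemma~\ref{Lsubs} as $|1-\nu z|^2\|M\T y\|_2^2\le\|(I-\bar z\Gamma\T)y\|_2^2$ and collect terms, and your bookkeeping for $|1-\nu z|^2$, $|z|^2$ and the mixed term $2\xi\,y^\ast(\Gamma_S+\imag\eta\Gamma_A)y$ is correct. Your limit $\xi\to\infty$ gives $\Gamma\Gamma\T-\nu^2MM\T\succeq0$ and hence $\nu\le\nu_\infty$. That is all the paper's one-line remark (``definiteness for $\xi\to\infty$'') actually delivers.

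Do not pursue the contradiction argument for strictness, because it cannot work: the strict inequality is not a consequence of the hypotheses. The leading coefficient $\Gamma\Gamma\T-\nu^2MM\T$ is positive semidefinite, so no perturbation of $y$ can make it negative. On its kernel, the constant and linear terms may both be positive.

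A concrete example in the paper's own setting takes $M=\diag(1,\frac12)$ and $\Gamma=\diag(1,\frac14)$. This is block form \eqref{Mblk} with $\|M\|_2=1$ and $e_1\T\Gamma e_1=1$, and it gives $\nu_\infty=1/\|\diag(1,2)\|_2=\frac12$. With $\nu=\frac12$ one gets $\Omega(\xi)=\diag\big(\xi+\frac34(1+\eta^2)\xi^2,\ \frac34+\frac14\xi\big)\succ0$ for all $\xi>0$. Even the scalar case $M=\frac12$, $\Gamma=1$, $\nu=\nu_\infty=2$ gives $\Omega(\xi)=\frac34+\xi$.

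So your fallback $\nu\le\nu_\infty$ is the correct consequence, consistent with \eqref{nuinfty}. Strictness must be imposed as a separate hypothesis, as Theorem~\ref{TNStab} does.
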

{\bf Proof}
With the reformulation \eqref{MNen}, the estimate \eqref{NDaempf} is equivalent with
\begin{align*}
 0\le y^*\big(I-(z\Gamma+\bar z\Gamma\T)+|z|^2\Gamma\Gamma\T\big)y-|1-\nu\ z|^2\|M\T y\|_2^2
\end{align*}
for all $y\in\C^s$.
Since $|z|^2=\xi^2(1+\eta^2)$, reordering terms leads to \eqref{Omdef}.
Definiteness for $\xi\to\infty$ requires the stated restriction on $\nu$.
\qed\\
\par\noindent
Since with fixed vectors $x,y\in\C^s$ on the unit sphere, the map $z\to(1-\nu z)y^\ast{\cal R}(z)x$ is holomorphic, by the maximum principle  verification of the lemma implies that the norm estimate holds in the full sector $\{z\in\C:\,|\Im z|\le-\eta\Re z\}$.
\par
In contrast to the characterization discussed in Remark~1, condition \eqref{Omdef} may be verified rigorously despite its dependence on a real and unbounded parameter $\xi\ge0$.
Firstly, the assumption $\|M\|_2=1$ is required by zero-stability and shows that $\Omega(0)=I-MM\T\succeq0$.
Each entry of $\Omega(\xi)$ is a quadratic polynomial in $\xi$.
Then, definiteness of $\Omega(\xi),\,\xi>0,$ may be shown by computing its formal LU-decomposition $\Omega=LU$ and showing that all diagonal elements of $U$ are positive.
This follows from the Sylvester Law of Inertia since these elements of $U$ coincide with that of the matrix $L^{-1}\Omega(L^{-1})^*$ which is congruent with $\Omega$ and a diagonal matrix.
If $M$ is in block-diagonal form \eqref{Mblk}, the first pivot step in the LU decomposition can be characterized more precisely.
Since $Me_1=M\T e_1=e_1$, we have
\begin{align}\label{Omg1}
\Omega(\xi)e_1=&\;2\xi(\Gamma_S+\imag\eta \Gamma_A-\nu I)e_1+\xi^2(1+\eta^2)
\big(\Gamma\Gamma\T-\nu^2I\big) e_1
\end{align}
and $\omega_{11}(\xi):=e_1\T\Omega(\xi)e_1=2\xi(e_1\T \Gamma e_1-\nu)+\xi^2(1+\eta^2)\big(\|\Gamma\T e_1\|_2^2-\nu^2\big)$.
In the context of Peer methods, the matrices $\Gamma,M$ will be transformed versions of $\bar K,\bar B$ and are related by order conditions.
We will see below in \eqref{blkivp} that $e_1\T \Gamma e_1=1$ and that $\nu\ll1$.
Hence, $e_1\T\Omega(\xi)e_1>0$ for $\xi>0$ and the factor $\xi$ cancels out in the quotients $\Omega(\xi)e_1/e_1\T\Omega(\xi)e_1=O(1),\,\xi\to0,$ of the first elimination step.
During elimination, all elements of $U(\xi)=\big(u_{ij}(\xi)\big)$ will be rational functions of $\xi$ of increasing degree.
In fact, the functions
\begin{align*}
\zeta_i(\xi):=\frac1{\xi}u_{11}(\xi)\cdots u_{ii}(\xi),\ i=1,\ldots,s,
\end{align*}
are polynomials of degree $2i-1$ since $\xi\cdot\zeta_i(\xi)$ is the $i$-th leading principal minor of $\Omega(\xi)$ and by the Sylvester criterion positivity $\zeta_i(\xi)>0,\,i=1,\ldots,s$, proves definiteness $\Omega(\xi)\succ0$ for $\xi>0$.
\par
Now, for polynomials $\zeta(\xi)$ there is a simple criterion for positivity on $\R_+$ which may be effectively verified by inspection of a finite number of coefficients.
These coefficients are rational numbers if all numbers in $\Gamma,M$ are rational.
The criterion 
\begin{align}\label{Polpos}
 \zeta(\xi)>0\;\forall\xi>0,\mbox{ if }
 \left\{\begin{array}{l}
  \mbox{for some $\ell\in\N$ the polynomial $(1+\xi)^\ell\zeta(\xi)$}\\
  \mbox{possesses nontrivial coefficients, all being positive,}
 \end{array}\right.
\end{align}
is sufficient, obviously, but according to Bernstein and P\'olya also necessary \cite{Polya1928}.

\section{Peer two-step methods for initial-value problems}\label{SIPMeth}
The criterion for norm stability will be applied to our recently developed adjoint Peer methods \cite{LangSchmitt2025a} for optimal control.
However, since Peer methods with a constant left dominant eigenvector $v\T=v\T\bar B_n$ for all $\bar B_n$ apparently have not been considered so far in the literature, new methods of this form will be designed.
In fact, we will start with these methods since the simplest of them is the only one where the stability criterion may be checked in some cases manually without the aid of algebra software.
We are focusing on the promising class of LSRK methods ({\em Last Stage is Runge-Kutta}, \cite{LangSchmitt2025a}) which have not been studied before for this problem class.
For local order $r\le s$, the consistency conditions are \cite{LangSchmitt2025a}
\begin{align}\label{Ordvr}
 A_nV_r=B_nV_r\PP_r^{-1}S_n^{-1}+K_nV_r\tilde E_r\gdw
 V_r=\bar B_nV_r\PP_r^{-1}S_n^{-1}+\bar K_nV_r\tilde E_r,
\end{align}
with $\bar B_n=A_n^{-1}B_n$, $\bar K_n=A_n^{-1}K_n$.
The other matrices in \eqref{Ordvr} are the Vandermonde matrix $V_r=\big(\eins,\cc,\ldots,\cc^{r-1}\big)$ with $\cc^j=(c_1^j,\ldots,c_s^j)\T$, the Pascal matrix $\PP_r=\big({j-1\choose i-1}\big)_{i,j=1}^r$, and the scaled shift matrix $\tilde E_r=\big(i\delta_{i+1,j}\big)_{i,j=1}^r$ which commutes with $\PP_r=\exp(\tilde E_r)$.
The dependence on $n$ enters through the stepsize ratio $\sigma_n=h_n/h_{n-1}$ only in $S_n:=S(\sigma_n)$, $S(\sigma):=\diag(1,\sigma,\ldots,\sigma^{r-1})$, where its dimension is easily derived from the context.
For pure initial value problems, order $r\ge s$ is usually used and $A_n\equiv I$ for simplicity.
In this case, the matrix $\bar B_n=B_n$ is given by
\begin{align}\label{Bsig}
\bar B_n=B(\sigma_n)=(V_s-\bar K_nV_s\tilde E_s)S_n\PP_sV_s^{-1}.
\end{align}
Considering similarity transformations not depending on $\sigma$ which transform $\bar B_n$ to block form \eqref{Mblk}, it is obvious that besides the right eigenvector $\eins=\bar B\eins$ also the corresponding left eigenvector $v$ must not depend on $\sigma_n$.
This condition leads to the following result which we formulate with the 'bar'-matrices in order to make it also applicable to the adjoint Peer methods below.
\begin{lemma}\label{LLSRK}
Let the matrix $\bar K_n$ satisfy $\eins\T V_s^{-1}\bar K_n\cc^{j-1}=1/j,$ $j=1,\ldots,s-1$.
Then, $v\T=\eins\T V_s^{-1}$ is the constant left eigenvector $v\T=v\T\bar B_n$ of the matrix \eqref{Bsig}.
If also $c_s=1$, then $v\T=e_s\T$ and the last stage of the Peer method is a Runge-Kutta stage
\begin{align}\label{LSRK}
 Y_{ns}=Y_{n-1,s}+h_n\sum_{j=1}^s\bar\kappa_{sj}F(Y_{nj}).
\end{align}
\end{lemma}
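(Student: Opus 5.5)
Since $\bar B_n=(V_s-\bar K_nV_s\tilde E_s)S_n\PP_sV_s^{-1}$ by \eqref{Bsig}, I will compute $v\T\bar B_n$ directly with $v\T=\eins\T V_s^{-1}$ and show it equals $v\T$. The key observation is that $v\T V_s=\eins\T$ and that $v\T\bar K_nV_s$ is almost completely fixed by the hypothesis.

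\emph{Step 1.} The $j$-th column of $V_s$ is $\cc^{j-1}$. The hypothesis $v\T\bar K_n\cc^{j-1}=1/j$ for $j=1,\ldots,s-1$ therefore fixes the first $s-1$ entries of the row vector $v\T\bar K_nV_s$; the last entry is unknown. Multiplying by $\tilde E_s$, column $j+1$ receives $j$ times entry $j$. Hence
\begin{align*}
v\T\bar K_nV_s\tilde E_s=\big(0,\,1,\,1,\ldots,1\big)=\eins\T-e_1\T ,
\end{align*}
since $j\cdot(1/j)=1$ for $j=1,\ldots,s-1$. The uncontrolled last entry of $v\T\bar K_nV_s$ is killed by $\tilde E_s$, which is exactly why only $s-1$ conditions are needed.

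\emph{Step 2.} Combining, $v\T(V_s-\bar K_nV_s\tilde E_s)=\eins\T-(\eins\T-e_1\T)=e_1\T$. Then $e_1\T S_n=e_1\T$ because $S_n$ is diagonal with leading entry $1$, and $e_1\T\PP_s=\eins\T$ because the first row of the Pascal matrix is ${j-1\choose 0}=1$. Finally $\eins\T V_s^{-1}=v\T$, so $v\T\bar B_n=v\T$ for every $\sigma_n$. The vector $v$ is nonzero since $V_s$ is nonsingular for distinct nodes, so $v$ is a left eigenvector for the eigenvalue $1$, independent of $n$.

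\emph{Step 3 ($c_s=1$).} The last row of $V_s$ is $(1,c_s,\ldots,c_s^{s-1})=\eins\T$, so $e_s\T V_s=\eins\T$. This gives $\eins\T V_s^{-1}=e_s\T$, i.e.\ $v=e_s$.

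\emph{Step 4 (last stage).} Take row $s$ of \eqref{Peerivp} with $A_n=I$:
\begin{align*}
Y_{ns}=e_s\T\bar B_nY_{n-1}+h_n\sum_j\bar\kappa_{sj}F(Y_{nj}).
\end{align*}
By Step 2, $e_s\T\bar B_n=e_s\T$, so the first term is $Y_{n-1,s}$ (with the Kronecker interpretation). This is \eqref{LSRK}.

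There is no real obstacle here. The only point needing care is the index bookkeeping in Step 1: checking that the scaled shift $\tilde E_s$ maps entry $j$ to column $j+1$ with factor $j$, so that the first column becomes zero and the free $s$-th entry disappears.
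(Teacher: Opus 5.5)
Your proposal is correct and follows the paper's proof. The paper carries out the same computation in the transformed coordinates $\tilde B_n=V_s^{-1}\bar B_nV_s$, $\tilde K_n=V_s^{-1}\bar K_nV_s$: using $e_1\T\PP_s=\eins\T$ and $\eins\T\tilde K_n e_j=1/j$, it shows $e_1\T\PP_s(I-\tilde K_n\tilde E_s)S_n=e_1\T$, which is exactly your Steps 1--2. Your Step 4 simply makes explicit the last-row reading of \eqref{Peerivp} that the paper leaves implicit; it does not need $A_n=I$, since \eqref{Peerivp} is already written with the barred matrices.
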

\textbf{Proof}
 The proof simplifies by considering the transformed matrices $\tilde B_n=V_s^{-1}\bar B_nV_s$, 
 $\tilde K_n=V_s^{-1}\bar K_nV_s$.
The assumption on $\bar K_n$ is equivalent with $\eins\T\tilde K_ne_j=1/j,\,j<s,$ which means that $e_1\T\PP_s(I-\tilde K_n\tilde E_s)e_j=0,\,j=2,\ldots, s$.
Regrouping the factors in \eqref{Bsig} in order to make $S_n$ the right-most one, the eigenvector condition becomes
\begin{align*}
 e_1\T\PP_s \tilde B_n\PP_s^{-1}=e_1\T\PP_s(I-\tilde K_n\tilde E_s)S_n= e_1\T\mbox{ for all }\sigma_n\in\R.
\end{align*}
Hence, $v\T=e_1\T \PP_s V_s^{-1}=\eins\T V_s^{-1}$ is the eigenvector of $\bar B_n$ and does not depend on $\sigma_n$.
For $c_s=1$ we have $e_s\T V_s=\eins\T$ leading to $\eins\T V_s^{-1}=e_s\T$.
\qed\\
\par\noindent
The property \eqref{LSRK} was named LSRK {\em (Last Stage is Runge-Kutta)} in \cite{LangSchmitt2025a} since $Y_{n-1,s}\cong y(t_n)$ for $c_s=1$.
It leads to some favorable properties of the Peer method one of which is discussed now.
Denoting vectors of stage values of the exact solution by boldface like ${\bf y}_n=\big(y^\star(t_{ni})\big)_{i=1}^s$, ${\bf y}_n'=\big((y^\star)'(t_{ni})\big)_{i=1}^s$, Taylor expansion at $t_n$ of the truncation error of the Peer step \eqref{Peerivp} gives
\begin{align}\label{lokfer}
 \tau_n:=&\;{\bf y}_n-A_n^{-1}(B_n{\bf y}_{n-1}+h_nK{\bf y}_n')
 =h_n^r \beta_r(\sigma_n)y^{(r)}(t_n)+O(h_n^{r+1}),\\\label{betar}
  \beta_r(\sigma)=&\;\frac1{r!}(\cc^r-\bar B_n(\cc-\eins)^r\sigma^{-r}-r\bar K_n\cc^{r-1}).
\end{align}
The norm of the vector $\beta_r$ is again considered to be the error constant,
\begin{align}\label{errr}
 err_r:=\|\beta_r(1)\|_2.
\end{align}
In the analysis of the global error usually one order is lost.
Since long products \eqref{SFProd} with factors ${\cal R}_j(0)$ of the form \eqref{Mblk} converge to a rank-1-matrix, this loss may be prevented by a {\em super-convergence} effect if the (constant) left eigenvector of $B_n$ is orthogonal to $\beta_r$, canceling the leading error term, which means $e_s\T\beta_{r}=0$ for LSRK methods.
But surprisingly it is also possible to push the local and global order to $r=s+1$.
We combine both results now.
\begin{lemma}\label{LOrdsp1}
Let the Peer method posses local order $s$ with the matrix $\bar B_n$ given in \eqref{Bsig} and let $\varphi(t):=(t-c_1)\cdots(t-c_s)$ be the node polynomial of the method.
Then, the additional $s$ conditions for local order $s+1$ may be written as the vector equation
\begin{align}\label{Ordsp1}
 \varphi(\eins+\sigma_n\cc)=\sigma_n\bar K_n\varphi'(\eins+\sigma_n\cc).
\end{align}
For LSRK methods, the additional condition for super-convergence $e_s\T\beta_{s+1}(\sigma)\equiv0$ of {\em global} order $s+1$ is
\begin{align}\label{Supkv}
 e_s\T \bar K_n\cc^s=\frac1{s+1}.
\end{align}
\end{lemma}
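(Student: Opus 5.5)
We work directly with the truncation error coefficient $\beta_{s+1}(\sigma)$ from \eqref{betar}, taking $A_n=I$. Since the method already has local order $s$, the conditions \eqref{Ordvr} with $r=s$ hold; equivalently $\beta_j(\sigma)=0$ for $j\le s$. We first write these conditions in polynomial form. For every polynomial $p$ of degree $<s$,
\begin{align*}
 p(\cc)=\bar B_n\,p\big((\cc-\eins)/\sigma_n\big)+\bar K_n\,p'(\cc),
\end{align*}
where polynomials act componentwise on node vectors. To check this, apply \eqref{Ordvr} to the monomials $t^{j}$. Shifting and scaling gives $\cc^j=\bar B_n\sigma^{-j}(\cc-\eins)^j+j\bar K_n\cc^{j-1}$ for $j<s$, which is exactly $\beta_j=0$ up to the factor $1/j!$. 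Local order $s+1$ then means $\beta_s(\sigma)=0$ in addition, i.e.\ the identity for $p(t)=t^s$. By linearity and the lower-degree identities, this is equivalent to the identity for the monic degree-$s$ polynomial $p(t)=\varphi(1+\sigma_n t)/\sigma_n^s$, which is a shifted and scaled node polynomial.

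The key step is to evaluate this identity. With $p(t)=\varphi(1+\sigma_n t)\sigma_n^{-s}$ we get $p\big((\cc-\eins)/\sigma_n\big)=\varphi(\cc)\sigma_n^{-s}=0$, because $\varphi$ vanishes at the nodes. So the $\bar B_n$-term drops out completely. Moreover $p'(t)=\sigma_n^{1-s}\varphi'(1+\sigma_n t)$. Multiplying the identity by $\sigma_n^s$ therefore gives exactly \eqref{Ordsp1}: $\varphi(\eins+\sigma_n\cc)=\sigma_n\bar K_n\varphi'(\eins+\sigma_n\cc)$. The one thing to be careful about is the index bookkeeping: the local order-$(s+1)$ condition corresponds to $\beta_s\equiv0$ in \eqref{betar}, and the substitution relating $\cc$ to the previous step's stages $\eins+\sigma\cc$ must be consistent with \eqref{Bsig}. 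I would verify this against \eqref{Bsig} by expanding $S_n\PP_s$, which represents $t\mapsto 1+\sigma t$ in the monomial basis.

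For the super-convergence condition, assume the method has local order $s$, so $\beta_{s}\ne0$ is allowed in general. We then need the component of $\beta_{s+1}$ orthogonal to the constant left eigenvector $e_s$ to vanish, as described before the lemma. By Lemma~\ref{LLSRK}, $e_s\T\bar B_n=e_s\T$, and $c_s=1$. Hence
\begin{align*}
 (s+1)!\,e_s\T\beta_{s+1}(\sigma)=1-e_s\T(\cc-\eins)^{s+1}\sigma^{-s-1}-(s+1)e_s\T\bar K_n\cc^s .
\end{align*}
Since $e_s\T(\cc-\eins)^{s+1}=(c_s-1)^{s+1}=0$, this vanishes for every $\sigma$ if and only if $e_s\T\bar K_n\cc^s=1/(s+1)$, which is \eqref{Supkv}. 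The same computation at index $s$ confirms consistency with the lower-order conditions assumed in Lemma~\ref{LLSRK}. I expect the main obstacle to be not these computations but pinning down which $\beta$-index corresponds to "order $s+1$" under the paper's conventions. I will fix that convention first by checking the case $s=1$ (the implicit Euler/trapezoidal family).
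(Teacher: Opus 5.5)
Your proof is correct. For \eqref{Ordsp1} it takes a genuinely different route from the paper; the super-convergence part is the same computation as the paper's.

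\textbf{Derivation of \eqref{Ordsp1}.} The paper does not derive this condition itself. It refers to the computations in (4.2) of \cite{SchmittWeinerPodhaisky2005} (general $\sigma$, diagonal $\bar K_n$) and Lemma~3.1 of \cite{SchmittWeinerBeck2013} (triangular $\bar K_n$), and states that combining them gives the result.

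You proceed differently. You recast the order conditions as the linear identity $p(\cc)=\bar B_n\,p\big((\cc-\eins)/\sigma_n\big)+\bar K_n\,p'(\cc)$ for all $p$ of degree $<s$. You then reduce the one extra condition $\beta_s=0$ to a single monic degree-$s$ test polynomial. Finally you choose $p(t)=\sigma_n^{-s}\varphi(1+\sigma_n t)$, which vanishes at the previous-step stage positions $(\cc-\eins)/\sigma_n$, so the $\bar B_n$-term drops out.

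This buys a short, self-contained proof. It shows transparently why only $\bar K_n$ appears in \eqref{Ordsp1}. It needs only the order-$s$ conditions and no structural assumption on $\bar K_n$, not even triangularity. The paper's route mainly buys continuity with the earlier literature, at the cost of relying on lengthy external computations done for special structures of $\bar K_n$.

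\textbf{Super-convergence.} Here you do exactly what the paper does: use $e_s\T\bar B_n=e_s\T$ and $c_s=1$ to kill the middle term of $e_s\T\beta_{s+1}$.

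\textbf{Slips to fix.} Neither affects the argument.
\begin{enumerate}
\item Local order $s$ means $\beta_j=0$ for $j\le s-1$, not $j\le s$ as you write at the start. Your subsequent text uses the correct range.
\item In the super-convergence part the setting is local order $s+1$, so $\beta_s=0$ and $\beta_{s+1}$ is the leading term. Your remark that ``$\beta_s\ne0$ is allowed'' is therefore out of place. The algebraic equivalence $e_s\T\beta_{s+1}\equiv0\iff$ \eqref{Supkv} uses only the LSRK property, so the computation stands.
\end{enumerate}

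\textbf{Index convention.} Your convention is already the right one: by \eqref{lokfer}, local order $r$ means $\tau_n=h_n^r\beta_r y^{(r)}(t_n)+O(h_n^{r+1})$. The planned check with $s=1$ is therefore unnecessary.
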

{\bf Proof}
The first condition has been shown before in previous work, but not exactly in the form \eqref{Ordsp1}.
However, combining the lengthy computations from (4.2) in \cite{SchmittWeinerPodhaisky2005} for general $\sigma$ but diagonal $\bar K_n$ and Lemma~3.1 in \cite{SchmittWeinerBeck2013} for triangular $\bar K_n$, the condition is shown easily.
\par 
For LSRK methods, the dominant left eigenvector is $e_s\T=e_s\T \bar B_n$.
Hence, super-convergence of order $s+1$ requires that
\begin{align*}
 0\stackrel!=&\;e_s\T\beta_{s+1}=c_s^{s+1}-\sigma_n^{-s-1}e_s\T\bar B_n(\cc-\eins)^{s+1}-(s+1)e_s\T\bar K_n\cc^s\\
 =&\;1-0-(s+1)e_s\T\bar K_n\cc^s.
 \qed
\end{align*}
\begin{remark}
For LSRK methods the two conditions \eqref{Ordsp1}, \eqref{Supkv} show that the last row of $\bar K_n$ contains the constant weights of the quadrature rule associated with the nodes in $\cc$ and requires that the rule is exact for polynomials of degree $s$.
Hence, the node polynomial has to be orthogonal to constants, i.e., $\int_0^1\varphi(t)dt=0$.
\end{remark}
An important consequence of condition \eqref{Ordsp1} is that even local order $s+1$ may be achieved for general grids if also the coefficient matrix $\bar K_n$ is $\sigma$-dependent.
To our knowledge, this case has not been discussed so far in the literature for $L(\alpha)$-stable Peer methods.
In the mentioned papers \cite{SchmittWeinerPodhaisky2005}, \cite{SchmittWeinerBeck2013} condition \eqref{Ordsp1} was used for obtaining super-convergence of order $s$, while order $s+1$ was obtained previously only by use of additional explicit terms $F(Y_{n-1})$ in the Peer step \cite{SchneiderLangWeiner2021}. 
The additional order condition \eqref{Ordsp1} is easily satisfied in the last $s-1$ components by solving it for the elements $\kappa_{i1}$, $i>1$, which are not restricted in any way.
However, due to the triangular form of $\bar K_n$ in the first component, the condition requires that
\begin{align}\label{kappa11}
\frac1{\bar\kappa_{11}^{[n]}}=\frac{\sigma_n\varphi'(1+\sigma_nc_1)}{\varphi(1+\sigma_nc_1)}
 =\big(\ln\varphi(1+\sigma_n t)\big)'\big|_{c_1},
\end{align} 
which needs to be positive.
Indeed, for LSRK methods with nodes in $(0,1]=(0,c_s]$, it is obvious that $\bar\kappa_{11}^{[n]}>0$ is a smooth bounded function since $\varphi(t)$ is an increasing positive function for $t>1$ and the factor $\sigma_n$ is canceled by $\varphi(1+\sigma t)=\varphi(c_s+\sigma t)=O(\sigma t),\,t\to0$.
\par
Finally, we have to note that super-convergence is a consequence of the fact that long products $\bar B_n\cdots\bar B_k$, $n>k$, converge for $n-k\to\infty$ to a rank-one matrix which is $\bar B^\infty=\eins e_s\T$ for LSRK methods, see \cite{LangSchmitt2025a}.
However, order $s+1$ will be observed in practice only if the convergence $\bar B_n\cdots\bar B_k\beta_{s+1}\to0$ is fast enough.
With the decompositions \eqref{Mblk} resp. \eqref{blkivp} below, this requirement may be formulated as
\begin{align}\label{kontrkt}
 \varrho(M_{se})=\varrho(B_{se})\le\gamma<1
\end{align}
where $\gamma=0.8$ works well in practice, \cite{LangSchmitt2022b}.
Consult \eqref{blkivp} for the definition of $B_{se}$.
\par
In \cite{LangSchmitt2025a} a decoupling transformation \eqref{Mblk} was constructed for general adjoint Peer methods.
However, for LSRK methods we may present a simple transformation $W_b$ to block-diagonal form for all $\bar B_n$, which will be a basic factor of the full weight matrix $W$.
\begin{lemma}\label{LWb}
The matrix
\begin{align*}
 W_b:=\begin{pmatrix}\eins_{s-1}&I_{s-1}\\[1mm]
 1&0_{s-1}\T \end{pmatrix}
\end{align*}
satisfies $e_1\T W_b^{-1}=e_s\T$, $W_be_1=\eins$.
Then, for any matrix $\bar B_n,\bar K_n$, satisfying \eqref{Bsig} and the assumptions of Lemma~\ref{LLSRK} with $c_s=1$, it holds that
\begin{align}\label{blkivp}
 W_b^{-1}\bar B_nW_b=\begin{pmatrix}1&0_{s-1}\T\\[1mm]
 0_{s-1}&B_{se}(\sigma_n) \end{pmatrix},\quad
 e_1\T(W_b^{-1}\bar K_nW_b)e_1=1.
\end{align}
\end{lemma}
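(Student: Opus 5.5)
The plan is to check the two vector identities for $W_b$ directly, and then read off the block structure from the facts that $\eins$ is a right and $e_s\T$ a left eigenvector of every $\bar B_n$.

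\textbf{The identities for $W_b$.} Each row $i<s$ of $W_b$ is $(1,e_i\T)$ and the last row is $(1,0\T)$. So the first column of $W_b$ is $\eins$, which gives $W_be_1=\eins$. The last row of $W_b$ is $e_1\T$, so $e_s\T W_b=e_1\T$. Since $W_b$ is nonsingular (it is a permutation of a unit triangular matrix), this is equivalent to $e_1\T W_b^{-1}=e_s\T$.

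\textbf{Block structure of $W_b^{-1}\bar B_nW_b$.}
\begin{itemize}
\item Pre-consistency gives $\bar B_n\eins=\eins$ (the first column of \eqref{Bsig} times $V_s^{-1}$, i.e.\ $V_se_1=\eins$). Hence
\[
W_b^{-1}\bar B_nW_be_1=W_b^{-1}\eins=e_1 ,
\]
so the first column of the transformed matrix is $e_1$.
\item By Lemma~\ref{LLSRK} with $c_s=1$, $e_s\T\bar B_n=e_s\T$. Hence
\[
e_1\T W_b^{-1}\bar B_nW_b=e_s\T\bar B_nW_b=e_s\T W_b=e_1\T ,
\]
so the first row is $e_1\T$.
\end{itemize}
Together these give the block-diagonal form \eqref{blkivp}, and $B_{se}(\sigma_n)$ is simply defined as the southeast block.

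\textbf{The corner entry of the transformed $\bar K_n$.} Using the two identities above,
\[
e_1\T W_b^{-1}\bar K_nW_be_1=e_s\T\bar K_n\eins .
\]
It remains to show that the row sum of the last row of $\bar K_n$ is $1$. This follows from the hypothesis of Lemma~\ref{LLSRK} with $j=1$: $\eins\T V_s^{-1}\bar K_n\cc^0=1$. Since $\eins\T V_s^{-1}=e_s\T$ when $c_s=1$, this is exactly $e_s\T\bar K_n\eins=1$.

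The argument is short. The step needing the most care is the $j=1$ case: if $s=1$ the range $j\le s-1$ is empty and the condition is not available. In that case I would fall back on the consistency condition \eqref{Ordvr}, whose first column gives $\eins=\bar B_n\eins$, and whose second column (when $r\ge2$) multiplied by $e_s\T$ gives $1=e_s\T(\cc-\eins)\sigma_n^{-1}+e_s\T\bar K_n\eins=0+e_s\T\bar K_n\eins$, using $c_s=1$. For $s=1$ only the first column is available, and there the statement reduces to the trivial scalar case.
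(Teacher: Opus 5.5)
Your proof is correct and follows the paper's proof: the first column of $W_b^{-1}\bar B_nW_b$ is $e_1$ because $\bar B_n\eins=\eins$, the first row is $e_1\T$ because $e_s\T\bar B_n=e_s\T$, and the corner entry of the transformed $\bar K_n$ is $e_s\T\bar K_n\eins=\eins\T V_s^{-1}\bar K_n\cc^0=1$, which the paper writes as $\eins\T\tilde K_ne_1=1$. Your closing aside on $s=1$ is unnecessary, since the lemma tacitly assumes $s\ge2$, and it is slightly inaccurate: for $s=1$ with only $r=1$ the scalar $\bar K_n$ is unconstrained, so the corner identity is not trivial and holds only through your $r\ge2$ fallback.
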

{\bf Proof}
A simple computation shows 
$W_b^{-1}\bar B_nW_be_1=W_b^{-1}\bar B_n\eins=W_b^{-1}\eins=e_1$ 
and $e_1\T W_b^{-1}\bar B_nW_b=e_s\T\bar B_nW_b=e_s\T W_b=e_1\T$.
Finally, $e_1\T W_b^{-1}\bar K_nW_be_1=\eins\T\tilde K_n e_1=1$ by the assumptions of Lemma~\ref{LLSRK} on $\bar K(\sigma_n)$.
\qed\\
\par\noindent
According to the lemma, for vectors $y\cong e_1$ the left-hand side in \eqref{ziels} is indeed a monotonically decreasing function of $\xi>0$ and a necessary condition for \eqref{ziels} to hold is satisfied here.
Also, for $\nu<1$ the first pivot $\omega_{11}(\xi)=\zeta_1(\xi)$ in the LU decomposition of $\Omega(\xi)$ is always positive for $\xi>0$ since $\omega_{11}(\xi)\ge 2\xi(g_{11}-\nu)+\xi^2(1+\eta^2)(g_{11}^2-\nu^2)$ according to \eqref{Omg1} and this first element need not be discussed below for each method.
And the element $g_{11}=e_1\T \Gamma e_1=\eins\T\bar K_ne_1=1$ in \eqref{blkivp} will not be changed by additional scaling transformation discussed now.
\par
We note that for variable stepsizes the south-east block $B_{se}$ in \eqref{blkivp} depends on the stepsize ratio $\sigma_n$.
In order to minimize its norm for specific Peer methods, an additional constant similarity transformation of the same block-form $1\oplus R_{se}$ will be applied leading to a final weight matrix $W=W_b(1\oplus R_{se})\T$.
Under such transformations the properties in \eqref{blkivp} remain unchanged.
Due to the unitary invariance of the spectral norm, $R_{se}$ can be restricted to (upper) triangular form since $R$ may be considered to be the triangular factor of the QR-decomposition $QR=X$ of a general factor in $W_bX\T$.
Use of the similarity transformation with the fixed matrix $W$ in the spectral norm corresponds to estimates in the weighted norm defined by
\begin{align}\label{Wnorm}
 \ltnorm \Theta\rtnorm :=\|W^{-1}\Theta W\|_2,\ \Theta\in\C^{s\times s},
\end{align}
which is induced by the vector norm $\ltnorm x\rtnorm:=\|W^{-1}x\|_2,\,x\in\C^s$.
Since the verification procedure of the norm estimate consists of three different steps, we summarize it now for LSRK Peer methods.
For practical reasons we use a set ${\cal S}\subseteq(0,2)$ of admissible stepsize ratios which may be an interval ${\cal S}=[\underline\sigma,\bar\sigma]$ or a finite set like ${\cal S}=\{\underline\sigma,1,\bar\sigma\}$ with $\underline\sigma<1<\bar\sigma$.
Since we now are talking about matrix families, the limit of the admissible damping factors has to be redefined as
\begin{align}\label{nuinfneu}
 \nu_\infty:=1/\max\{\ltnorm \bar K(\sigma)^{-1}\bar B(\sigma)\rtnorm:\,\sigma\in{\cal S}\}.
\end{align}
Obviously, the possible choices of the set ${\cal S}$, the aperture $\eta$ of the complex sector, and the damping factor $\nu$ are related and a careful selection is required for successful applications.
\begin{theorem}\label{TNStab}
Let $\bar B(\sigma),\bar K(\sigma)$ be the coefficients of an LSRK Peer method satisfying the order conditions \eqref{Ordvr} with $r\ge 2$ and let $W\in\R^{s\times s}$ be such that $M=W^{-1}\bar B(\sigma)W$ is in block form \eqref{Mblk} with $\|M_{se}\|_2<1$ for $\sigma\in{\cal S}$.
Choose $\nu\ge0$ such that $\nu<\min\{1,\nu_\infty\}$, see \eqref{nuinfneu}, and $\eta>0$.
Compute the formal LU-decomposition of $\Omega(\xi)=L(\xi)U(\xi)$ from \eqref{Omdef} and let $\zeta_i(\xi)=\xi^{-1}u_{11}(\xi)\cdots u_{ii}(\xi)$, $i=1,\ldots,s$.
\par
If this decomposition $L(\xi)U(\xi)=\Omega(\xi)$ exists, if the polynomials $\zeta_i(\xi)$ are nontrivial for $i=1,\ldots,s$ and there exist $\ell_i\in\N_0$ such that $(1+\xi)^{\ell_i}\zeta_i(\xi)$ possesses no negative coefficients, then $\Omega(\xi)$ is positive definite for $\xi>0$ and it holds that
\begin{align}\label{Nrmsek}
 \ltnorm\big(I-z\bar K(\sigma)\big)^{-1}\bar B(\sigma)\rtnorm\le\frac1{|1-\nu z|},
 \ z\in\C,\ |\Im z|\le-\eta\Re z,\,\sigma\in{\cal S}.
\end{align}
\end{theorem}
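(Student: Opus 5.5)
The plan is to reduce \eqref{Nrmsek} to Lemma~\ref{LOmdefin}, applied to the transformed matrices $M=W^{-1}\bar B(\sigma)W$ and $\Gamma=W^{-1}\bar K(\sigma)W$ for each fixed $\sigma\in{\cal S}$. The matrix $W$ does not depend on $\sigma$, so
\[
W^{-1}\big(I-z\bar K(\sigma)\big)^{-1}\bar B(\sigma)W=(I-z\Gamma)^{-1}M=\Rc(z).
\]
By \eqref{Wnorm}, the weighted norm in \eqref{Nrmsek} therefore equals $\|\Rc(z)\|_2$, and the problem becomes a spectral norm estimate for the prototype function $\Rc(z)$.

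\textbf{Hypotheses of Lemma~\ref{LOmdefin}.}
\begin{itemize}
\item The block form \eqref{Mblk} with $\|M_{se}\|_2<1$ gives $\|M\|_2=\max\{1,\|M_{se}\|_2\}=1$.
\item $I-z\Gamma$ is nonsingular on the sector. The eigenvalues of $\Gamma$ are those of the lower triangular $\bar K(\sigma)$, i.e.\ its diagonal entries, which are positive; for $\bar\kappa_{11}$ this is \eqref{kappa11}. Hence $1-z\gamma\ne0$ for every eigenvalue $\gamma>0$ and $\Re z\le0$.
\end{itemize}

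\textbf{Positive definiteness of $\Omega(\xi)$.} This is the main point.
\begin{itemize}
\item \emph{Existence of the factorization.} We are given that the formal $LU$ factorization exists. Since $\Omega(\xi)$ is Hermitian, $\xi\zeta_i(\xi)=u_{11}\cdots u_{ii}$ is the $i$-th leading principal minor of $\Omega(\xi)$. Consequently each $\zeta_i$ is a polynomial in $\xi$; divisibility by $\xi$ follows from $\Omega(0)e_1=0$, see \eqref{Omg1}.
\item \emph{Positivity of the $\zeta_i$.} The hypothesis is only that $(1+\xi)^{\ell_i}\zeta_i$ has no negative coefficients and $\zeta_i$ is nontrivial. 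This yields $\zeta_i(\xi)\ge0$, and in fact $>0$ for $\xi>0$: a nonzero polynomial with nonnegative coefficients is strictly positive on $(0,\infty)$, and $(1+\xi)^{\ell_i}>0$. This is the sufficient direction of \eqref{Polpos}, so the P\'olya converse is not needed.
\item \emph{Sylvester criterion.} All leading principal minors of $\Omega(\xi)$ equal $\xi\zeta_i(\xi)>0$ for $\xi>0$, so $\Omega(\xi)\succ0$ for all $\xi>0$.
\end{itemize}

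\textbf{The first pivot.} Here $\zeta_1=\omega_{11}/\xi$, and $e_1\T\Gamma e_1=1$ holds by Lemma~\ref{LWb}; the extra factor $1\oplus R_{se}$ in $W$ does not change this entry. Together with $\nu<1$, this makes $\zeta_1$ automatically positive, consistent with the assumption.

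\textbf{From the ray to the sector.} Lemma~\ref{LOmdefin} now gives \eqref{NDaempf} on the ray $z=-\xi(1+\imag\eta)$, $\xi\ge0$. The same argument applies to the conjugate ray with $-\eta$: $\Omega$ is replaced by its complex conjugate $\overline{\Omega(\xi)}$, which is positive definite exactly when $\Omega(\xi)$ is, because $\Gamma$ and $M$ are real. On the real axis ($\eta=0$) the estimate also holds, since the sector is its own conjugate.

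To fill the sector, use the maximum principle as described after Lemma~\ref{LOmdefin}. For fixed unit vectors $x,y$, the map
\[
z\mapsto(1-\nu z)\,y^\ast\Rc(z)x
\]
is holomorphic on the closed sector $\{|\Im z|\le-\eta\Re z\}$, because the poles $1/\gamma$ lie on the positive real axis. It is bounded by $1$ on both boundary rays. It is also bounded at infinity: as $z\to\infty$ it tends to $-\nu\,y^\ast\Gamma^{-1}Mx$, and $\nu<\nu_\infty$ makes this at most $1$.

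The sector is unbounded, so this step is the main obstacle: one must justify the maximum principle there. I would handle it with a Phragm\'en–Lindel\"of argument, or more simply by mapping $w=1/z$: the function is holomorphic at $w=0$ after removing the singularity, and the classical maximum principle applies on the bounded image of the sector with the boundary rays. Taking the supremum over $x,y$ gives \eqref{Nrmsek} for each $\sigma\in{\cal S}$, completing the proof.
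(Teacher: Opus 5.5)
Your proposal is correct and follows the paper's proof. Nonnegative coefficients of $(1+\xi)^{\ell_i}\zeta_i$ give $\zeta_i>0$, Sylvester's criterion then gives $\Omega(\xi)\succ0$, and Lemma~\ref{LOmdefin} applied to $M=W^{-1}\bar BW$, $\Gamma=W^{-1}\bar KW$, together with the maximum-principle remark after that lemma, yields \eqref{Nrmsek}; your conjugate-ray and $z\to\infty$ details only make explicit what the paper leaves implicit. There are two harmless slips: the limit at infinity is $+\nu\,y^\ast\Gamma^{-1}Mx$, and $w=1/z$ maps the sector onto itself rather than onto a bounded set, so use $w=1/(1-z)$ instead or treat $\infty$ as a boundary point where the modulus is below $1$.
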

{\bf Proof}
Repeating previous arguments, positive coefficients of $(1+\xi)^{\ell_i}\zeta_i(\xi)$ prove that $\zeta_i(\xi)>0$, $i=1,\ldots,s$, for $\xi>0$.
Hence, all principal minors $\xi\,\zeta_i(\xi),\,i=1,\ldots,s$, of the matrix $\Omega(\xi)$ are positive showing its definiteness for $\xi>0$ by the Sylvester criterion.
With $M=W^{-1}\bar B(\sigma)W$, $\Gamma=W^{-1}\bar K(\sigma)W$ Lemma~\ref{LOmdefin} completes the proof.
\qed\\
\par\noindent
In the design of the following Peer methods, the main objective was maximizing the angle $\alpha$ of $L(\alpha)$-stability, not the size of the damping parameter $\nu$, for instance.
Only in a second step scalings $R_{se}$ were sought for with the aim of maximizing $\nu$.
Methods for which no appropriate scaling $R_{se}$ could be found were sorted out.
\par
If ${\cal S}$ is a finite set, the criterion may be checked for all its elements.
Although it might be quite tedious to implement, we also describe shortly, as a proof of concept, a simple procedure for verifying \eqref{Polpos} in the case that the set ${\cal S}$ of stepsize ratios is an interval.
For order $s+1$, the elements $\kappa_{i1}$ of $\bar K(\sigma)$ are rational functions of $\sigma$, all with the same denominator $\varphi'(1+\sigma c_1)>0$ for $c_i\in(0,1],\,i=1,\ldots,s$, see \eqref{kappa11}.
Multiplying the modified polynomials $(1+\xi)^{\ell_i}\zeta_i(\xi)$ by the least common denominator $\varphi'(1+\sigma c_1)^d$, $d\ge1$, we may write 
\begin{align*}
 \hat\zeta_i(\xi)=\varphi'(1+\sigma c_1)^d(1+\xi)^{\ell_i}\zeta_i(\xi)
 =\sum_{k=0}^{\ell_i+2i-1}\hat\zeta_{ik}(\sigma)\xi^k,\quad 1\le i\le s,
\end{align*}
where the coefficients $\hat\zeta_{ik}$ are polynomials in $\sigma$.
With a change of variables $\sigma=1+\delta$ this means that
\[ \hat\zeta_{ik}(\sigma)=\hat\zeta_{ik}(1+\delta)=\hat\zeta_{ik}(1)+\sum_{j=1}\hat\zeta_{ikj}\delta^j\]
with finite sums.
Now, assuming $\hat\zeta_{ik}(1)>0$ and considering only small intervals around one like ${\cal S}\subseteq[1-\delta_0,1+\delta_0]$, $0<\delta_0\ll1$, we may ensure non-negativity of $\zeta_{ik}(\sigma)$ by requiring positivity of a lower bound such as
\begin{align}\label{zetadlt}
 \hat\zeta_{ik}(1+\delta)\ge\hat\zeta_{ik}(1)-|\delta|\sum_{j=1}|\hat\zeta_{ikj}|\delta_0^{j-1}\stackrel!\ge0,\ i=2,\ldots,s.
\end{align}
Here, we remind that $\zeta_1(\xi)>0,\,\xi>0$, according to \eqref{Omg1}.
Then, denoting $\hat\delta=\min\{\delta_0,\delta'\}$, $\delta'=\min_{i=2}^s\min_{k=0}^{\ell_i+2i}\hat\zeta_{ik}(1)/\sum_{j=1}|\hat\zeta_{ikj}|\delta_0^{j-1}$, the criterion \eqref{Polpos} will be satisfied for $\sigma\in{\cal S}=[1-\hat\delta,1+\hat\delta]$.
\par
The assumption of stepsize ratios with $|\sigma_n-1|\le\delta_0$ with small $\delta_0\cong0.1$ below may seem quite restrictive.
However, adaptive final grids for highly accurate solutions of mildly stiff problems often satisfy this property, see \cite{LangSchmitt2026a}.
\par
The construction of appropriate weight matrices $W$ used in Theorem~\ref{TNStab} will be incorporated in the design of the new Peer methods below.
In proving the norm estimate \eqref{Nrmsek} we will always consider two cases.
Since the damping factor $\nu$ will vanish if the aperture $\eta$ approaches its maximal possible value, the first case will discuss nearly extreme apertures with no damping $\nu=0$.
For smaller aperture $\eta$, the damping factor soon will not be limited by bumps near the imaginary axis but by the limiting behavior for $z\to\infty$.
Hence, the second case will be some convenient choice with smaller aperture and $\nu<\nu_{\infty}$.
Since the norm estimate \eqref{Nrmsek} is analytically much stronger than information on the spectral radius as in \eqref{Aalpha}, it should be expected that the aperture $\eta$ of the complex sector is smaller than that of the $A(\alpha)$ sector, i.e. $\eta<\tan(\alpha)$.
\subsection{Iteration for starting methods}
In the context of pure initial value problems, the computation of approximations for the starting step of multi-step-type methods is usually obtained by unnamed standard methods.
However, in order to present a self-contained procedure and in view of the favorable experience in the context of adjoint methods, we consider again Runge-Kutta-type starting methods for the new Peer schemes.
With the required high orders of approximation, a triangular form of $K_0$ or $A_0$ is no longer possible for $s>2$.
In order to avoid coupled systems of larger dimensions than $m\times m$, we again resort to block-Gauss-Seidel iterations.
Since this iteration looks simpler for methods in the form \eqref{Peerva} than for \eqref{Peerivp}, we consider the starting method
\begin{align}\label{Startab}
 A_0Y_0= a\otimes y_0+h_0b\otimes f(y_0)+h_0 F(Y_0)
\end{align}
with $K_0=I$.
Since our new IVP-methods will have high orders, this step also contains the additional evaluation of $y'(t_0)$, see \cite{LangSchmitt2022a}.
Since the time step \eqref{Startab} is applied once only, local order $s+1$ is sufficient and the mentioned paper gives the corresponding order conditions as
\begin{align}\label{OBstrt}
 A_0V_{s+1}=ae_1\T+be_2\T +V_{s+1}\tilde E_{s+1}.
\end{align}
The truncation error for \eqref{Startab} with \eqref{OBSred} and $s\ge 2$ is
\begin{align}\label{lokfe0}
 \tau_0=h_0^{r}\beta_{r}y^{(r)}(t_0)+O(h_0^{r+1}),
 \ \beta_{r}=\frac1{r!}(\cc^{r}-rA_0^{-1}\cc^r),
\end{align}
with $r=s+1$.
The two first columns of \eqref{OBstrt} immediately yield $a=A_0\eins$ und $b=A_0\cc-\eins$ for the coefficient vectors in \eqref{Startab}.
Introducing $D_c=\diag(c_i)$, the remaining columns of \eqref{OBstrt} read
\begin{align}\label{OBSred}
 A_0\big(\cc^2,\ldots,\cc^s\big)=(2\cc,3\cc^2,\ldots)
 \gdw
 A_0D_c^2V_{s-1}=D_c V_{s-1}\diag(2,\ldots,s).
\end{align}
These conditions may be solved with the entries $A_0e_s$ of the last column as parameters or with a slack vector $\phi\in\R^s$ as
\begin{align}\label{A0par}
  A_0=D_c(V_sD_+V_s^{-1})D_c^{-2}-\phi e_s\T(D_c^2 V_s)^{-1},
\end{align}
where $D_{+}=\diag(2,\ldots,s+1)$.
\par
Considering an approximation $\tilde A_0$ to $A_0$, one iteration for the Newton step for \eqref{Startab} reads
\begin{align}\label{Nwtit}
 \big(\tilde A_0-h_0 \textbf{J}_0\big)(Y_0^{(k+1)}-Y_0^{(k)})
 =a\otimes y_0+h_0b\otimes f_0+h_0F(Y_0^{(k)})-A_0Y_0^{(k)},
\end{align}
$k\ge 0$, with the Jacobian $\textbf{J}_0=\bldiag_i(J_{0i})$, $J_{0i}=\nabla_y f(y_0),\ i=1,\ldots,s$.
Now, if $\tilde A_0$ is lower triangular and all sub-diagonals are the same as in $A_0$, then, the difference $R_0:=A_0-\tilde A_0$ is upper triangular and the step can be  performed without additional memory requirements.
In fact, \eqref{Nwtit} may be solved successively as in \cite{LangSchmitt2026a},
\begin{align}\notag
 &\big(\tilde a_{ii}^{(0)}I-h_0J_{0i}\big)(Y_{0i}^{(k+1)}-Y_{0i}^{(k)})\\\notag
 &=a_iy_0+h_0b_if_0+h_0f(Y_{0i})-\sum_{j=1}^{i-1}\tilde a_{ij}^{(0)}Y_{0j}^{(k+1)}-\sum_{j=i}^{s}
 \left( r_{ij}^{(0)} +\delta_{ij}\tilde a_{ii}^{(0)} \right) Y_{0j}^{(k)},
\end{align}
where  $Y_{0i}^{(k+1)}$ may overwrite  $Y_{0i}^{(k)}$ immediately.
Convergence of the linear iteration \eqref{Nwtit} is governed by the matrix
\begin{align*}
 \Sc_0(z):=(\tilde A_0-zI)^{-1}(\tilde A_0-A_0),\ z=h_0\lambda\in\C,
\end{align*}
with eigenvalues $\lambda$ of the matrix $J_0$.
In the following subsections, we will present very small contraction factors $\rho_\R:=\sup\{\varrho(\Sc_0(z)):\,z\in(-\infty,0]\}$ along the negative real axis and $\rho_\alpha:=\sup\{\varrho(\Sc_0(z)):\,|\arg(z)-\pi|\le\alpha\}$ in the sector of $A(\alpha)$-stability of the corresponding methods.
\par
For linear test problems $y'=Jy$, $J\in\R^{m\times m}$, the starting step reduces to $Y_0=\Rc_0(h_0 J)y_0$, where
\begin{align*}
 \Rc_0(z)=(A_0-zI)^{-1}(a+zb)=\eins+ z(A_0-zI)^{-1}A_0\cc
\end{align*}
for methods satisfying the order conditions \eqref{OBstrt}.
The function $\Rc_0:\,\C\to\C^s$ is the stability function of the scalar problem $y'=\lambda y$, $y_0=1$, $z=h\lambda\in\C$.
\subsection{Method \texttt{IP2o3}}
With the increased order, already two-stage methods with order $s+1=3$ are of interest.
The order conditions from Lemma~\ref{LOrdsp1} lead to the Radau nodes $\cc\T=(\frac13,1)$, and the coefficients
\begin{align}\label{IP2o3}
 \bar K(\sigma)=\begin{pmatrix}
 \frac{2+\sigma}{6(1+\sigma)}&\cdot\\[2mm]
  \frac34&\frac14
 \end{pmatrix},\quad
 \bar B(\sigma)=\begin{pmatrix}
  \frac{-\sigma^2}{4(1+\sigma)}&\frac{(2+\sigma)^2}{4(1+\sigma)}\\[2mm]
  \cdot&1
 \end{pmatrix}
\end{align}
of method \texttt{IP2o3} (Initial-value Peer method with 2 stages and Order 3)
are uniquely determined.
In \eqref{IP2o3} and below, scalar zeroes are replaced by dots in order to highlight the structure.
The stability angle of the method is $\alpha\doteq77.87$ and $\tan\alpha\doteq 4.65$ is an upper bound for possible choices of the aperture $\eta$ in Lemma~\ref{LOmdefin}.
An additional column scaling of the basic weight $W_b$ as
\[ W=\begin{pmatrix} 1&\frac52\\1&\cdot\end{pmatrix}\]
leads to a nearly maximal aperture $\eta=17/6\doteq\tan(70.5^o)$ for $\sigma=11/10$.
For general $\sigma$, the matrices $M=W^{-1}\bar BW$ and $\Gamma=W^{-1}\bar KW$ turn out to be
\begin{align*}
 M=\begin{pmatrix}1&\cdot\\[2mm]
 \cdot&-\frac14\frac{\sigma^2}{1+\sigma} \end{pmatrix},
\ \Gamma=\begin{pmatrix}
  1&\frac{15}8\\[2mm]
 -\frac1{15}\frac{4+5\sigma}{1+\sigma}& -\frac1{12}\frac{5+7\sigma}{1+\sigma}\end{pmatrix}.
\end{align*}
It is plain that some fine-tuning of the choices for $\nu,\sigma$ and $\eta$ is required.
In the simplest case with no damping, $\nu=0$, and uniform grids, $\sigma=1$, the assumptions of the criterion in Theorem~\ref{TNStab} can be satisfied with a maximal value $\eta\cong3.2\doteq\tan(72.6^o)$.
Allowing for variable stepsizes with $\sigma_n\le1.1$ and $z=-\xi(1+\imag\eta),\,\xi\ge0,$ with a nearly maximal choice $\eta_0=17/6\doteq\tan(70.5^o)$ for $\nu=0$, the test matrix in \eqref{Omdef} becomes
\begin{align*}
 \Omega(\xi)=\begin{pmatrix}
 2\xi+\frac{93925}{2304}\xi^2 
 & \frac{793}{504}\xi+\imag\frac{18649}{3024}\xi-\frac{116675}{10368}\xi^2\\[2mm]
  \frac{793}{504}\xi-\imag\frac{18649}{3024}\xi-\frac{116675}{10368}\xi^2
 &\frac{690959}{705600}-\frac{127}{126}\xi+\frac{7119125}{2286144}\xi^2
\end{pmatrix}.
\end{align*}
Up to constant positive factors, the modified minors of $\Omega(\xi)$ are
\begin{align*}
\zeta_1(\xi)\circeq&\;4608+93925\xi, \\
\zeta_2(\xi)\circeq&\; 257899064832-342791943525\xi+72540000000\xi^2+40602250000\xi^3.
\end{align*}
The first one is obviously positive for $\xi>0$, see \eqref{Omg1}, and the Bernstein-P\'olya criterion shows positivity $\zeta_2(\xi)>0,\,\xi\ge0$, since $\zeta_2(0)>0$ and all coefficients are positive for $\hat\zeta_2(\xi)=(1+\xi)^{60}\zeta_2(\xi)$ with $\ell_2=60$.
For the reciprocal stepsize ratio $\sigma=10/11$, we have $\hat\zeta_2(\xi)>0$ already for $\ell_2\ge 9$ and with $\ell_2=16$ for $\sigma=1$.
For method \texttt{IP2o3}, this proves the bound
\begin{align*}
\ltnorm(I-z\bar K_n)^{-1}\bar B_n\rtnorm\le1,\ |\Im z|\le-\frac{17}6\Re z,\ \sigma_n\in{\cal S}=\left\{\frac{10}{11},1,\frac{11}{10}\right\}.
\end{align*}
Reducing the aperture to $\eta=2\doteq\tan(63.4^o)$, we may look for bounds with damping.
Now, for $\sigma=1.1$ a damping factor $\nu=1/20$ may be chosen leading to the test matrix
\begin{align*}
 \Omega(\xi)=\begin{pmatrix}
 \frac{19}{10}\xi+\frac{7221}{320}\xi^2
 & \frac{793}{504}\xi+\imag\frac{1097}{252}\xi-\frac{1795}{288}\xi^2\\[2mm]
 \frac{793}{504}\xi-\imag\frac{1097}{252}\xi-\frac{1795}{288}\xi^2
 &\frac{690959}{705600}-\frac{791849}{784000}\xi+\frac{876068231}{508032000}\xi^2\end{pmatrix},
\end{align*}
and the polynomials
\begin{align*}
\zeta_1(\xi)\circeq&\;608+7221\xi, \\
\zeta_2(\xi)\circeq&\; 302474211840-202787480336\xi+15917708856\xi^2+10919696051\xi^3.
\end{align*}
Here, the choice $\ell_2=36$ verifies that $\zeta_2(\xi)>0,\,\xi>0$.
Smaller exponents are sufficient for $\sigma=1$ ($\ell_2=13$) and $\sigma=10/11$ ($\ell_2=7$).
Hence, the criterion proves for the method \texttt{IP2o3} also that
\begin{align}\label{BIP2o3}
 \ltnorm(I-z\bar K_n)^{-1}\bar B_n\rtnorm\le\frac1{|1-z/20|},\ |\Im z|\le-2\,\Re z,
 \ \sigma_n\in{\cal S},
\end{align}
in the fixed norm \eqref{Wnorm} with the finite set ${\cal S}=\{10/11,1,1.1\}$.
\par
Considering now intervals ${\cal S}=[1-\hat\delta,1+\hat\delta]$, we mention that the common denominator of $\zeta_2(\xi)$ is $(2+\delta)^2>0$.
In order to prove \eqref{BIP2o3} with $\hat\delta$ near $0.1$, we apply the lower bound \eqref{zetadlt} to $\hat\zeta_2(\xi)$ with the largest exponent $\ell_2=36$ required for $\sigma=1.1$.
There is no room here to present this in detail.
In order to show the principle, we consider only the coefficients $\hat\zeta_{2,26}(\sigma)$ and $\hat\zeta_{2,27}(\sigma)$ since $\hat\zeta_{2,26}(1.1)<0$ was the only negative coefficient with the smaller exponent 35.
These seem to be the two critical functions for $\ell_2=36$, indeed, since all but the leading coefficient are negative for both, $\hat\zeta_{2kj}<0$, $j=1,\ldots,4$.
With the restriction $\delta\le\delta_0=0.1$, the floating point versions of \eqref{zetadlt} read
\begin{align*}
 \hat\zeta_{2,26}(1+\delta)\ge&\; 1.65\cdot 10^{8}-1.68\cdot 10^{9}|\delta|,\\
 \hat\zeta_{2,27}(1+\delta)\ge&\; 7.52\cdot 10^{7}-6.77\cdot 10^{8}|\delta|,
\end{align*}
leading to $\hat\delta\le\min\{0.098,0.111,\delta_0\}$.
\par
Since there is one order condition $A_0\cc^2=2\cc$ for the starting method only due to the additional term $b\otimes f(y_0)$, there exists a simple, diagonal solution $A_0=\diag(6,2)$.
For both stages, this is a step of the trapezoidal rule, each, with the corresponding stepsizes $h_0c_1=h_0/3$ and $h_0c_2=h_0$.
The local error of the trapezoidal rule is 3, of course.
Here, no iteration is required, $\rho_\R=\rho_\alpha=0$.
\subsection{Method \texttt{IP3o4}}
For $s=3$, the super-convergence condition \eqref{Supkv} may be solved by setting
$c_2=\frac12(1-2c_1)/(1-3c_1)$.
An $L(\alpha)$-stable 3-stage implicit LSRK method of order 4 with angle $\alpha=64.59^o$, $\tan\alpha\doteq2.1$, exists with the nodes $\cc=(\frac19,\frac7{12},1)\T$.
Its coefficients are given in Appendix~\ref{AIP3o4}, but we show the first entry of $\bar K(\sigma)$ here explicitly:
\begin{align*}
 \bar\kappa_{11}=\frac1{18}\frac{120+47\sigma+4\sigma^2}{60+47\sigma+6\sigma^2}
 \in\left[\frac1{14},\frac19\right],\ \sigma\in[0,2],
\end{align*}
in order to show that it is a smooth, positive and bounded function.
With an additional scaling of the last 2 columns of the weight matrix as 
\[  W=\begin{pmatrix}
  1&\frac{72}{13}&\cdot\\[2mm]
  1&\frac{17}{9}&\frac{3}2\\[2mm]
  1&\cdot&\cdot
  \end{pmatrix},\]
the norm criterion may be successfully applied with a maximal aperture of $\eta_0=8/5\doteq\tan(58.0^o)$ and no damping.
Here, $\zeta_2(\xi)>0,\,\xi\ge0,$ has only positive coefficients  for $\sigma\in{\cal S}=\{\frac{11}{12},1,\frac{12}{11}\}$.
The coefficients of $\zeta_3(\xi)$ have different signs (having up to 38 digits, they can not be displayed here).
However, $\hat\zeta_3(\xi)$ possesses positive coefficients only with $\ell_3=96$ ($\sigma=12/11$), $\ell_3=47$ ($\sigma=1$), and $\ell_3=59$ ($\sigma=11/12$). 
For method \texttt{IP3o4}, this proves the bound
\begin{align*}
\ltnorm(I-z\bar K_n)^{-1}\bar B_n\rtnorm\le1,\ |\Im z|\le-\frac85\Re z,\ \sigma_n\in{\cal S}=\Big\{\frac{11}{12},1,\frac{12}{11}\Big\}.
\end{align*}
On a smaller sector with $\eta=6/5\doteq\tan(50.2^o)$ the criterion allows for damping $\nu=1/64$ on the same set ${\cal S}$.
Again $\zeta_2(\xi)$ has positive coefficients, while $\hat\zeta_3(\xi)>0\,(\xi\ge0)$ requires exponents $\ell_3=110$ ($\sigma=12/11$), $\ell_3=150$ ($\sigma=1$) and $\ell_3=718$ ($\sigma=11/12$).
This also proves the bound
\begin{align*}
 \ltnorm(I-z\bar K_n)^{-1}\bar B_n\rtnorm\le\frac1{|1-z/64|},\ |\Im z|\le-\frac65\Re z,
  \ \sigma_n\in {\cal S}.
\end{align*}
The conditions for the starting step \eqref{Startab} are now $A_0(\cc^2,\cc^3)=(2\cc,3\cc^2)$.
The slack vector in \eqref{A0par} may be used to obtain $2+1$ block form of $A_0$, which fixes the upper $2\times 2$ block of $A_0$ completely.
Still, $A_0$ has positive eigenvalues $6,108/7,13/3$, and the Jacobian $A_0-h_0{\bf J}_0$ in the Newton step will be well-conditioned.
Optimization of the remaining (positive) parameters $\tilde a_{11}^{(0)},\tilde a_{22}^{(0)},\tilde a_{33}^{(0)}\equiv a_{33}^{(0)}$, lead to the values $\diag(\tilde A_0)=(16,\frac{17}3,\frac{13}3)$ and a rather small contraction factor $\rho_\R\doteq0.01248<1/80$ and $\rho_\alpha<1/63\doteq0.0159$.
The lower eigenvalue bound for Lemma~\ref{LKIJ} is $d_{\min}=13/3$.
All coefficients are presented in Appendix~\ref{AIP3o4}.
\subsection{Method \texttt{IP4o5}}
With $s=4$ stages, the search for methods with large stability angle even found $L$-stable methods with $\alpha=90^o$.
However, all these methods seem to have a second eigenvalue of $\bar B(1)$ very close to one violating the condition \eqref{kontrkt} for super-convergence and also have large entries in the stability matrix with norms $\|\bar B_n\|_\infty>30$.
Such methods may suffer from numerical instabilities, since $\|\bar B\|_\infty$ is an amplification factor for rounding errors in the step $Y_n=\bar B_nY_{n-1}+\ldots$.
\par
Instead we chose the following method \texttt{IP4o5} with $\alpha\doteq 71.2^o$ based on the nodes
\begin{align*}
 \cc\T=\left(\frac16,\frac37,\frac{31}{50},1\right).
\end{align*}
The coefficient matrix $\bar K(\sigma)$ is presented in Appendix~\ref{AIP4o5} and
\begin{align*}
 \bar\kappa_{11}=\frac1{12}\frac{6840+6363\sigma+1874\sigma^2+175\sigma^3}
 {3420+6363\sigma+2811\sigma^2+350\sigma^3}\in\left[\frac1{13},\frac16\right],\ \sigma\in[0,2],
\end{align*}
is again bounded away from zero.
With the weight matrix
\begin{align*}
 W=\begin{pmatrix}
  1&\frac{31}{20}&\cdot&\cdot\\[2mm]
  1&\frac{19}{20} &\frac{21}{10}&\cdot\\[2mm]
  1&\frac34&\frac{47}{25}&\frac13\\[2mm]
  1&\cdot&\cdot&\cdot
 \end{pmatrix},
\end{align*}
the norm criterion with $\nu=0$ is applicable up to the aperture $\eta_0=3/4\doteq\tan(36.8^o)$.
In this case, $\zeta_2(\xi)$ has only positive coefficients for $\sigma\in\{20/21,1,21/20\}$. 
The required exponents for producing positive coefficients of $\hat\zeta_i(\xi),\,i=3,4$, are 
\begin{align*}
\ell_3=7,\;\ell_4=33\;(\sigma=\frac{20}{21}),\;
\ell_3=7,\;\ell_4=53\;(\sigma=1),\;
\ell_3=10,\;\ell_4=367\;(\sigma=\frac{21}{20}).
\end{align*}
Reducing the aperture to $\eta=1/2\doteq\tan(26.5^o)$, an estimate with damping $\nu=1/280$ is established with the choices
\begin{align*}
\ell_3=4,\;\ell_4=39\;(\sigma=\frac{20}{21}),\;
\ell_3=6,\;\ell_4=63\;(\sigma=1),\;
\ell_3=7,\;\ell_4=360\;(\sigma=\frac{21}{20}).
\end{align*}
Again, $\zeta_2(\xi)$ has only positive coefficients.
Hence, for the method \texttt{IP4o5}, the following bounds hold:
\begin{align*}
\ltnorm(I-z\bar K_n)^{-1}\bar B_n\rtnorm\le1,&\ |\Im z|\le-\frac34\Re z,\ \sigma_n\in{\cal S}=\Big\{\frac{20}{21},1,\frac{21}{20}\Big\},\\\notag
 \ltnorm(I-z\bar K_n)^{-1}\bar B_n\rtnorm\le\frac1{|1-z/280|},&\ |\Im z|\le-\frac12\Re z,
  \ \sigma_n\in{\cal S}.
\end{align*}
For \texttt{IP4o5}, the contraction is very sensitive to the choice of parameters and a small factor $\rho_\R\doteq0.03664<1/27$ could only be found by computer searches.
The contraction factor in the whole sector is still good, $\rho_\alpha<0.09$.
Since we may require rigorous norm bounds for the starting method, we chose rational values for $A_0e_4=(-\frac1{25},\frac{32}{323},\frac3{25},\frac{105}{16})\T$ leading to eigenvalues satisfying $d_{\min}\ge5.4$.
The diagonals of $\tilde A_0$ are given as double precision numbers in Appendix~\ref{AIP4o5}.
\begin{table}
\centering
\centerline{\begin{tabular}{|l|c|c|c|c|c|c|c|c|}\hline
  name &$s$&$r,q$ & $\alpha$ & $\bar\sigma$&$\vartheta_0$&$\vartheta,\nu$&$err_{r}$\\\hline
  IP2o3&$2$&$3,0$&$77.87^o$&1.10&$70.5^o$&$63.4^o,\frac1{20}$&4.0e-3\\[1mm]
  IP3o4&$3$&$4,0$&$64.59^o$&$\frac{12}{11}$& $58.0^o$&$50.2^o,\frac1{64}$&4.5e-3\\[1mm]
  IP4o5&$4$&$5,0$&$71.20^o$&1.05& $36.8^o$&$26.5^o,\frac1{280}$&4.9e-4\\[1mm]\hline
  AP4o33vgi&$4$&$3,3$&$61.59^o$&$\frac{10}9$&$59.5^o$&$54.4^o,\frac1{40}$&1.3e-2\\[1mm]
\hline
\end{tabular}}
\parbox{14cm}{
\caption{Properties of the Peer methods: order $r$, adjoint order $q$, stability angle $\alpha$, maximal stepsize ratio $\bar\sigma$, $\vartheta=\arctan(\eta)$, damping factor $\nu$, and error constant \eqref{errr}.
}\label{TPT}
}
\end{table}

\section{The global error for stiff initial value problems}\label{GerrIVP}
We show the straight-forward application of the norm bound to error estimates for semi-linear problems \eqref{semilin}, where $J$ may have large norm but is assumed to have an eigen-decomposition with eigenvector basis matrix $X\in\C^{s\times s}$,
\begin{align}\label{XJX}
 X^{-1}JX=\Lambda=\diag_j(\lambda_j),\ \mu:=\max_{j=1}^m\Re\lambda_j<0,
\end{align}
and $g$ to be Lipschitz continuous
\begin{align}\label{Lipg}
 \|g(u)-g(v)\|_2\le L_g\|u-v\|_2,\quad u,v\in\R^m,
\end{align}
in the Euclidean norm for simplicity.
Denoting the stage vectors of the exact solution $y^\star$ of \eqref{semilin} and its derivative by ${\bf y}_n:=\big(y^\star(t_n+h_nc_j)\big)_{j=1}^s$, ${\bf y}_n':=\big((y^\star)'(t_n+h_nc_j)\big)_{j=1}^s$, the local errors of the Peer method are defined by
\begin{align*}
 \tau_n:={\bf y}_n-\bar B_n{\bf y}_{n-1}-h_n\bar K_n{\bf y}_n',\ n=1,\ldots,N.
\end{align*}
Then, subtracting this equation from \eqref{Peerivp}, the errors $\check Y_n:=Y_n-{\bf y}_n$ satisfy the recursion $\check Y_n=\bar B_n\check Y_{n-1}+h_n\bar K_n\big(J\check Y_n+g({\bf y}_n+\check Y_n)-g({\bf y}_n)\big)-\tau_n$.
Since the diagonal elements of $\bar K_n$ are positive and $\mu<0$ in \eqref{XJX}, this is equivalent with
\begin{align}\label{FeGl}
 \check Y_n=&\;(I-h_n\bar K_n\otimes J)^{-1}(\bar B_n\check Y_{n-1}-\tau_n)\nonumber\\
 &+h_n(\bar K_n^{-1}-h_nI\otimes J)^{-1}\big(G({\bf y}_n+\check Y_n)-G({\bf y}_n)\big).
\end{align}
Dealing with the nonlinear part of the error equation \eqref{FeGl} requires a bound for the matrix multiplying it.
In order to apply the weighted norm estimate to the full differential equation, we have to extend the definition of the norm to vectors and matrices in dimension $s\cdot m$.
Generalizing the old notation, we define
\begin{align}\label{WXnorm}
 \ltnorm \Theta\rtnorm :=\|(W\otimes X)^{-1}\Theta(W\otimes X)\|_2,\ \Theta\in\C^{(sm)\times(sm)},
\end{align}
which is induced by the vector norm $\ltnorm Y\rtnorm:=\|(W\otimes X)^{-1}Y\|_2,\,Y\in\C^{sm}$.
\begin{lemma}\label{LKIJ}
For $\sigma\in[\underline\sigma,\bar\sigma]\subset(0,\infty)$ let $A=A(\sigma)\in\R^{s\times s}$ be a uniformly bounded family of matrices having eigenvalues that satisfy $\Re\lambda_i(A(\sigma))\ge d_{\min}>0$, $i=1,\ldots,s$.
If $D_A(\sigma):=\diag_i(\lambda_i(A))$ contains the eigenvalues of $A$, then with $J$ from \eqref{XJX} it holds that
\begin{align*}
 \ltnorm (A-hI\otimes J)^{-1}\rtnorm\le
 \sum_{j=0}^{s-1}\frac{\|W^{-1}AW-U D_AU^\ast\|_2^j}{d_{min}^{j+1}}=:\theta,
\end{align*}
where $U$ is the unitary matrix from the Schur form $W^{-1}AW=U RU^\ast$.
\end{lemma}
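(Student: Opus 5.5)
The plan is to reduce the Kronecker-product matrix to a family of scalar-shifted $s\times s$ matrices and then bound each inverse by a Neumann-type expansion of the Schur form.

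\textbf{Step 1 (diagonalize $J$).} Using the definition \eqref{WXnorm} and $X^{-1}JX=\Lambda$ from \eqref{XJX}, we get
\[(W\otimes X)^{-1}(A-hI\otimes J)(W\otimes X)=W^{-1}AW\otimes I_m-hI\otimes\Lambda.\]
This matrix is unitarily permutation-similar to the block diagonal matrix $\bldiag_j\big(W^{-1}AW-h\lambda_jI_s\big)$. Since the spectral norm is invariant under this permutation, we obtain
\[\ltnorm (A-hI\otimes J)^{-1}\rtnorm=\max_{j}\|(W^{-1}AW-h\lambda_jI)^{-1}\|_2 .\]
Hence it suffices to bound $\|(\hat A-wI)^{-1}\|_2$ uniformly for all $w$ with $\Re w\le0$, where $\hat A:=W^{-1}AW$. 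Here $w=h\lambda_j$ and $\Re w\le 0$ holds because $h>0$ and $\mu<0$.

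\textbf{Step 2 (Schur form).} Write $\hat A=URU^\ast$ with $R$ upper triangular and $U$ unitary. The diagonal of $R$ is $D_A$, possibly after ordering the eigenvalues, so we may take $D_A=\diag(R)$. Split $R=D_A+N$ with $N$ strictly upper triangular; then $N$ is nilpotent, $N^s=0$. By unitary invariance,
\[\|N\|_2=\|URU^\ast-UD_AU^\ast\|_2=\|W^{-1}AW-UD_AU^\ast\|_2.\]
Then
\[(\hat A-wI)^{-1}=U(D_A-wI+N)^{-1}U^\ast=U\big(I+(D_A-wI)^{-1}N\big)^{-1}(D_A-wI)^{-1}U^\ast .\]
The matrix $(D_A-wI)^{-1}N$ is again strictly upper triangular and hence nilpotent. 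The Neumann series therefore terminates:
\[(I+(D_A-wI)^{-1}N)^{-1}=\sum_{j=0}^{s-1}\big(-(D_A-wI)^{-1}N\big)^j .\]

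\textbf{Step 3 (estimate).} For diagonal entries we have $|\lambda_i-w|\ge\Re\lambda_i-\Re w\ge d_{\min}$, so $\|(D_A-wI)^{-1}\|_2\le 1/d_{\min}$. Submultiplicativity then gives
\[\|(\hat A-wI)^{-1}\|_2\le\sum_{j=0}^{s-1}\frac{\|N\|_2^j}{d_{\min}^{j+1}}=\theta ,\]
uniformly in $w$ and $j$, which proves the claim.

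The main obstacle is bookkeeping rather than analysis. One must justify that the Kronecker structure permutes into blocks without changing the norm, and that $D_A$ in the statement is the diagonal of the Schur factor, in matching order, so that $N=R-D_A$ is strictly upper triangular. Uniform boundedness of $A(\sigma)$ is needed only to make $\theta$ finite uniformly over the $\sigma$-interval. If the Schur basis varies with $\sigma$, the bound holds pointwise for each $\sigma$, and we take $\theta$ as the supremum over $\sigma$.
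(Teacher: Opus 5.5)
Your proposal is correct and follows essentially the same route as the paper. You transform by $W\otimes X$ to reduce the norm to $\max_k\|(W^{-1}AW-h\lambda_kI)^{-1}\|_2$, split the Schur form into $D_A$ plus a strictly upper triangular nilpotent part so the Neumann series terminates after $s$ terms, and use $|\lambda_i(A)-h\lambda_k|\ge d_{\min}$ since $\Re(h\lambda_k)\le h\mu<0$. The only cosmetic difference is that you place $(D_A-wI)^{-1}$ to the right of the Neumann factor, whereas the paper expands in $(W^{-1}AW-UD_AU^\ast)(UD_AU^\ast-z_kI)^{-1}$; both products are nilpotent and give the same $\theta$.
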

\par\noindent
{\bf Proof}
Since the eigenvector basis $X^{-1}$ of $J$ is one factor of the norm weight, we have
\begin{align*}
 \ltnorm (A-h I\otimes J)^{-1}\rtnorm
 = \|(W^{-1}AW-h I\otimes\Lambda)^{-1}\|_2
 = \max_{k=1}^m\|(W^{-1}AW-z_kI)^{-1}\|_2,
\end{align*}
$z_k=h\lambda_k(J)$.
Since $D_A$ is also the main diagonal of the Schur matrix $R$, the difference $W^{-1}AW-UD_AU^\ast $ is nilpotent and the following Neumann series becomes a polynomial while the unitary factor $U$ may be omitted sometimes in the spectral norm,
\begin{align*}
 &\|(W^{-1}AW-z_kI)^{-1}\|_2\\
 &\le
 \|(D_A-z_kI)^{-1}\|_2\sum_{j=0}^{s-1}\|(W^{-1}AW-UD_AU^*)(UD_AU^*-z_kI)^{-1}\|_2^j\\
 &\le\sum_{j=0}^{s-1}\|W^{-1}AW-UD_AU^*\|_2^j\|(D_A-z_kI)^{-1}\|_2^{j+1}
\end{align*}
The diagonals in $D_A-z_kI$ are bounded from below by $|\lambda_i(A)-z_k|\ge d_{\min}-h\mu\ge d_{\min}$.
\qed\\
\par\noindent
We note that the bound from the lemma will be applied now with the triangular matrices $A_n=\bar K_n^{-1}$, where $U $ may be chosen as the orthogonal factor in the QR decomposition of $W\T =U R$.
The following Theorem essentially is a global stability estimate only for perturbations of the Peer step \eqref{Peerivp} relating to the solution $y^\star$.
\begin{theorem}\label{TAWPKvg}
Consider a Peer two-step method \eqref{Peerivp} having lower triangular coefficient matrix $\bar K_n(\sigma)$ and positive diagonals bounded as  $\max_{i=1}^s\bar\kappa_{ii}(\sigma)\le1/d_{\min}$ and coefficient $\bar B(\sigma)$ with bounded entries for $\sigma\in[\underline{\sigma},\bar\sigma]\subset(0,2]$.
Assume that there exists a constant weight matrix $W\in\R^{s\times s}$ such that for some appropriate $\nu,\eta\ge0$ the matrix family $\Omega(\xi)$ in \eqref{Omdef} with matrices $M=W\T\bar B(\sigma)W$ and $\Gamma=W^{-1}\bar K(\sigma)W$ is positive semidefinite for $\xi\ge0$,  $\sigma\in\Sc\subseteq[\underline{\sigma},\bar\sigma]$ according to the criterion of Theorem~\ref{TNStab}.
Let the right-hand side of the differential equation \eqref{semilin} satisfy the assumptions \eqref{XJX} and \eqref{Lipg} where the eigenvalues of $J$ are restricted by $|\Im\lambda_i(J)|\le-\eta\Re\lambda_i(J)$, $i=1,\ldots,m$.
\par
Then, with some starting vector $Y_0\in\R^{sm}$ and grids with stepsize ratios $\sigma_n\in\Sc$ and stepsizes $h_n\le1/(2 L_\phi)$, where $L_\Phi:=\theta L_g cond_2(W\otimes X)$, the Peer solutions $Y_n$, $n\ge1$, exist.
The errors are bounded by
\begin{align}\label{globFe}
 \ltnorm Y_n-{\bf y}_n\rtnorm\le
 e^{\varpi(t_{n+1}-t_1)}\ltnorm Y_0-{\bf y}_0\rtnorm
  +C\sum_{k=1}^n e^{\varpi(t_{n+1}-t_k)}\ltnorm\tau_{k}\rtnorm,\ n\ge1, 
\end{align}
with residuals $\tau_n$ in \eqref{lokfe0} and constants
 $\varpi:=0.9\nu\mu+2L_\Phi$ and $C$.
\end{theorem}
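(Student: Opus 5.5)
Starting from the error recursion \eqref{FeGl}, I will take the weighted norm \eqref{WXnorm} of both sides and split the right-hand side into a linear propagation part and a nonlinear perturbation part.

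\emph{Linear part.} Since $W\otimes X$ block-diagonalizes $I\otimes J$, we have
\[
\ltnorm (I-h_n\bar K_n\otimes J)^{-1}\bar B_n\rtnorm=\max_{j}\ltnorm (I-z_j\bar K_n)^{-1}\bar B_n\rtnorm
\]
in the $s\times s$ norm \eqref{Wnorm}, where $z_j=h_n\lambda_j$. Because all $z_j$ lie in the sector $|\Im z|\le-\eta\Re z$, Theorem~\ref{TNStab} (with Lemma~\ref{LOmdefin}) bounds each term by $1/|1-\nu z_j|\le 1/(1-\nu h_n\Re\lambda_j)\le 1/(1-\nu h_n\mu)$. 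I will then write $1/(1-\nu h_n\mu)\le e^{0.9\nu\mu h_n}$. This inequality holds for $h_n|\mu|\nu$ below a fixed threshold; for large $h_n|\mu|$ one needs either a stepsize restriction or the constant $0.9$ chosen with care. I expect this elementary bookkeeping to be the place where the factor $0.9$ comes from.

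\emph{Residual part.} The term $(I-h_n\bar K_n\otimes J)^{-1}\tau_n$ is not premultiplied by $\bar B_n$. I will bound $\ltnorm (I-h\bar K_n\otimes J)^{-1}\rtnorm$ by writing it as $(\bar K_n^{-1}-hI\otimes J)^{-1}\bar K_n^{-1}$ and applying Lemma~\ref{LKIJ} with $A=\bar K_n^{-1}$. The eigenvalues of this $A$ are $1/\bar\kappa_{ii}\ge d_{\min}$, and $\bar K_n^{-1}$ is uniformly bounded for $\sigma$ in the compact interval. This gives the constant $C$.

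\emph{Nonlinear part.} Lemma~\ref{LKIJ} gives $\ltnorm h_n(\bar K_n^{-1}-h_nI\otimes J)^{-1}\rtnorm\le h_n\theta$. Converting the Lipschitz bound \eqref{Lipg} from the Euclidean norm to the weighted norm costs a factor $cond_2(W\otimes X)$. Hence the nonlinear term is bounded by $h_nL_\Phi\ltnorm\check Y_n\rtnorm$.

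\emph{Existence and recursion.} Existence of $Y_n$ follows from the same estimate. The stage equation is a fixed-point problem
\[
\check Y_n=\mathrm{const}+h_n(\bar K_n^{-1}-h_nJ)^{-1}\bigl(G(\mathbf y_n+\check Y_n)-G(\mathbf y_n)\bigr),
\]
whose contraction constant is $h_nL_\Phi\le 1/2$, so Banach's theorem applies. Collecting the three parts gives
\[
(1-h_nL_\Phi)\ltnorm\check Y_n\rtnorm\le e^{0.9\nu\mu h_n}\ltnorm\check Y_{n-1}\rtnorm+C\ltnorm\tau_n\rtnorm .
\]
With $h_nL_\Phi\le1/2$ we have $1/(1-h_nL_\Phi)\le 1+2h_nL_\Phi\le e^{2L_\Phi h_n}$. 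Multiplying by this factor yields
\[
\ltnorm\check Y_n\rtnorm\le e^{\varpi h_n}\bigl(\ltnorm\check Y_{n-1}\rtnorm+C'\ltnorm\tau_n\rtnorm\bigr).
\]
Since $h_n=t_{n+1}-t_n$, unrolling this discrete Gronwall recursion produces the exponents $\varpi(t_{n+1}-t_k)$ and hence \eqref{globFe}.

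The main obstacle is the linear part, in particular making the transition from $1/|1-\nu z|$ to the exponential factor uniform in possibly very stiff $z$. I would try to handle it by noting that $\mu<0$, so $e^{0.9\nu\mu h}$ only requires $1+\nu h|\mu|\ge e^{0.9\nu h|\mu|}$. This holds for $\nu h|\mu|$ up to roughly $0.2$. Beyond that range I would fall back on the trivial bound $\le1$ and absorb the difference into the statement's constants, for instance by interpreting $\mu$ in $\varpi$ as $\max(\mu,-c/h)$. Alternatively I would accept a mild restriction on $h_n$; this is where I am least sure the stated constant is exactly attainable.
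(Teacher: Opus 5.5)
Your proposal is correct and follows the paper's proof essentially step by step. It uses the same contraction argument with constant $h_nL_\Phi\le\frac12$ yielding the factor $1+2h_nL_\Phi$, Lemma~\ref{LKIJ} for both the nonlinear term and the residual (giving $\theta\ltnorm\bar K_n^{-1}\rtnorm$), and Theorem~\ref{TNStab} with $|1-\nu z|\ge 1-h_n\nu\mu$ for the propagation. Your worry about the factor $0.9$ is resolved in the paper exactly by your stepsize-restriction option: the proof uses $1/(1+x)\le e^{-0.9x}$ for $x\in[0,0.2]$ and simply assumes $-0.2\le h_n\nu\mu\le 0$. That condition is missing from the statement but is genuinely needed, since $\ltnorm(I-z\bar K_n)^{-1}\bar B_n\rtnorm$ decays only like $1/|z|$.
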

{\bf Proof}
With $r_n:=(I-h_n\bar K_n\otimes J)^{-1}(\bar B_n\check Y_{n-1}-\tau_n)$, $n\ge1$, we may write the error equation \eqref{FeGl} as fixed point equation $\check Y_n=r_n+h_n\Phi_n(\check Y_{n})$, where $\Phi_n(0)=0$.
Considering two arguments $\hat Y,\tilde Y\in\R^{sm}$, we get
\begin{align*}
\ltnorm\Phi_n(\hat Y)-\Phi_n(\tilde Y)\rtnorm
=&\;\|(W\otimes X)^{-1}(\bar K_n^{-1}-h_nI\otimes J)^{-1}\big(G({\bf y}_n+\hat Y)-G({\bf y}_n+\tilde Y)\big)\|_2\\
 \le&\;\theta \|W^{-1}\|_2\|X^{-1}\|_2L_g\|\hat Y-\tilde Y\|_2\\
 \le&\;\theta L_g \,cond(W\otimes X)\ltnorm\hat Y-\tilde Y\rtnorm,
\end{align*}
with the bound $\ltnorm(\bar K_n^{-1}-h_nI\otimes J)^{-1}\rtnorm\le\theta$ from Lemma~\ref{LKIJ}.
Hence, with $L_\Phi:=\theta L_g\, cond(W\otimes X)$ the map $Y\mapsto r_n+h_n\Phi_n(Y)$ has Lipschitz constant $h_nL_\Phi$ and is contractive for $h_nL_\Phi\le1/2$, for instance.
Hence, if $h_n$ is small enough, each closed $\varepsilon$-neighborhood of the origin with $\varepsilon\ge2\ltnorm r_n\rtnorm$ is mapped onto itself and there exists a unique fixed point $\check Y_n$ in that neighborhood satisfying the a-priori bound
\begin{align*}
\ltnorm \check Y_n\rtnorm=&\;2\ltnorm r_n+h_n\big(\Phi_n(\check Y_n)-\Phi_n(0)\big)\rtnorm-\ltnorm\check Y_n\rtnorm\\
\le&\; 2\ltnorm r_n\rtnorm +(h_nL_\Phi-1)\ltnorm\check Y_n\rtnorm
\le 2\ltnorm r_n\rtnorm.
\end{align*}
Refining this argument, we obtain with Theorem~\ref{TNStab} that
\begin{align*}
\ltnorm \check Y_n\rtnorm=&\;\ltnorm r_n+h_n\big(\Phi_n(\check Y_n)-\Phi_n(0)\big)\rtnorm
\le (1+2h_nL_\Phi)\ltnorm r_n\rtnorm\\
 =&\;(1+2h_nL_\Phi)\ltnorm (I-h_n\bar K_n\otimes J)^{-1}(\bar B_n\check Y_{n-1}-\tau_n)\rtnorm\\
\le&\;(1+2h_nL_\Phi)\Big(\frac1{1-h_n\nu\mu}\ltnorm\check Y_{n-1}\rtnorm
 +\theta\ltnorm \bar K_n^{-1}\rtnorm\,\ltnorm\tau_n\rtnorm\Big),
\end{align*}
with the upper bound $\theta$ from Lemma~\ref{LKIJ}.
Products of factors $(1+2h_nL_\Phi)/(1-h_n\nu\mu)$ may be simplified by use of the exponential function.
For the numerator, we use the simple bound
\[ \frac1{1+x}\le e^{-0.9x},\quad x\in[0,0.2].\]
Hence for $-0.2\le h_n\nu\mu\le0$, we obtain the simplified recursive estimate
\begin{align*}
\ltnorm \check Y_n\rtnorm\le e^{h_n(0.9\nu\mu+2L_\Phi)}\ltnorm\check Y_{n-1}\rtnorm+C_n\ltnorm\tau_n\rtnorm
\end{align*}
with bounded constants $C_n=(1+2h_nL_\Phi)\theta\ltnorm K_n^{-1}\rtnorm\le C$.
The assertion \eqref{globFe} follows inductively.
\qed
\begin{remark}
We have computed all constants in these estimates explicitly in order to show that none depends on the stiffness $\|J\|_2$, at least if $cond_2(X)$ is of moderate size, which is true, for instance, if $J$ is a normal matrix.
\par
Also, for some problems with small Lipschitz constants $L_g$ or $\mu\ll0$, the estimate \eqref{globFe} may even show error decay if $\varpi=0.9\nu\mu+2L_\Phi<0$.
\end{remark}
If the Peer method satisfies the conditions for local order $s+1$ from Lemma~\ref{LOrdsp1}, the estimate \eqref{globFe} from the Theorem directly leads to the bound
\begin{align}\label{Konvos}
 \ltnorm Y_n-{\bf y}_n\rtnorm\le
 e^{\varpi(t_{n+1}-t_1)}\ltnorm Y_0-{\bf y}_0\rtnorm
  +C\max_{k=1}^n e^{\varpi(t_{n+1}-t_k)}h_k^s\|{y^\star}^{(s+1)}\|_{[k]}.
\end{align}
By summation of local error bounds, this estimate shows convergence of order $s$ only for general grids satisfying $\sigma_n\in\Sc$.
The estimate \eqref{Konvos} relates the stepsize to the local size of the derivative which we need in more general form in the next theorem.
Hence, we define
\begin{align*}
 \|y^{(r)}\|_{[n]}:=\max_{t\in[t_n,t_{n+1}]}\|y^{(r)}(t)\|_2,\quad
 \|y^{(r_1:r_2)}\|_{[n]}:=\max_{r_1\le k\le r_2}\|y^{(k)}(t)\|_{[n]},
\end{align*}
with $r_1\le r_2$.
In order to recover global order $s+1$, the condition \eqref{Supkv} for super-convergence has to be used.
The improved order can be shown in the stiff context for smooth grids by a refinement of a technique from \cite{SchneiderLangWeiner2021}.
However, the condition \eqref{Supkv}, $e_s\T\beta_{s+1}(\sigma)\equiv0$, is not sufficient to see super-convergence in theory and practice.
In addition, the subdominant eigenvalues of $\bar B(\sigma)$ have to be smaller than one.
A corresponding assumption on the southeast block of the block decomposition \eqref{blkivp}, $W^{-1}\bar B(\sigma)W=1\oplus B_{se}(\sigma)$, was already given in \eqref{kontrkt} with a fairly small damping factor $\gamma\cong 0.8$ in order to see super-convergence reliably in practice.
Now, for the proofs a bound on the norm 
\begin{align}\label{kontrktn}
 \|B_{se}(\sigma)\|_2\le\tilde\gamma<1
\end{align}
is required where the choice of $\tilde\gamma<1$ only affects the constants in the estimates, see \cite{LangSchmitt2026a}.
\par
The proof of super-convergence will use a different solution $\tilde Y_{ni}$ modified by multiples of $(y^\star)^{(s+1)}(t_n)$, and in the corresponding local error, products with the Jacobian of $f$ will appear in the form
$\bar f_{y,ni}\cdot (y^\star)^{(s+1)}(t_n)$ with integral means $\bar f_{y,ni}=\int_0^1 f_y\big((1-\xi)Y_{ni}+\xi\tilde Y_{ni})d\xi$.
In the stiff context, using norms of these matrices would severely spoil the constants in the error estimates.
However, for semi-linear problems \eqref{semilin}, the highest solution derivative obeys
\[ (y^\star)^{(s+2)}=\big(J+g_y(y^\star)\big)(y^\star)^{(s+1)}+\ldots\]
where the remaining terms depend on the derivatives $(y^\star)',\ldots,(y^\star)^{(s)}$ and those of $g$ but not on $J$.
Hence, we may expect that $\bar f_{y,ni}\cdot (y^\star)^{(s+1)}(t_n)\cong (y^\star)^{(s+2)}(t_n)$ and we will use for simplicity the following technical assumption:
\begin{align}\label{FYysp}
 \|f_y(u)(y^\star)^{(s+1)}(t_n)\|\le const\|(y^\star)^{(1:s+2)}\|_{[n]},
 \ \|u-{y^\star}(t_n)\|\le\varepsilon.
\end{align}
\begin{theorem}\label{Tsupknv}
Let the solution of \eqref{semilin} be smooth, $y^\star\in C^{s+2}[0,T]$, and assume \eqref{FYysp}.
Let the LSRK Peer method satisfy the assumptions from Theorem~\ref{TAWPKvg}, the conditions from Lemma~\ref{LOrdsp1} for local order $s+1$, and the super-convergence condition \eqref{Supkv}.
Let $K_n=K(\sigma_n)$ depend smoothly on $\sigma$ in a neighborhood of $\sigma=1$, let $W^{-1}\bar BW$ satisfy \eqref{kontrktn}, and let the grid be smooth with $\sigma_n=1+\eta_n h_n$, $\eta_n\le\bar\eta,\,1\le n\le N$.
Then, the global error is of order $s+1$ and satisfies the estimate
\begin{align*}
\ltnorm Y_n-{\bf y}_n\rtnorm\le
e^{\varpi(t_{n+1}-t_1)}\ltnorm Y_0-{\bf y}_0\rtnorm
  +C\max_{k=1}^n h_k^{s+1}\|(y^\star)^{(1:s+2)}\|_{[k]},\ n\ge1.
\end{align*}
\end{theorem}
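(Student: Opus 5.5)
We use a modified-solution (defect-correction) argument in the spirit of \cite{SchneiderLangWeiner2021}. The leading part of the local error is removed by perturbing the exact stage values in the directions $W(0\oplus I_{s-1})$. The component along the dominant eigenvector $e_1$ of $W^{-1}\bar B_nW$ carries no $O(h^{s+1})$ error, because of the super-convergence condition \eqref{Supkv}.

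By Lemma~\ref{LOrdsp1}, the local error is $\tau_n=h_n^{s+1}\beta_{s+1}(\sigma_n)(y^\star)^{(s+1)}(t_n)+O(h_n^{s+2}\|(y^\star)^{(s+2)}\|_{[n]})$, and \eqref{Supkv} gives $e_s\T\beta_{s+1}(\sigma)\equiv0$. Since $e_1\T W_b^{-1}=e_s\T$, the transformed vector $W^{-1}\beta_{s+1}(\sigma)$ has vanishing first component, so $W^{-1}\beta_{s+1}=0\oplus b_{se}(\sigma)$. We define modified stage values
\[
\tilde{\bf y}_n:={\bf y}_n+h_n^{s+1}\big(W(0\oplus\psi_n)\otimes I\big)(y^\star)^{(s+1)}(t_n),
\qquad
\psi_n:=(I-B_{se}(\sigma_n))^{-1}b_{se}(\sigma_n).
\]
The inverse exists with norm at most $1/(1-\tilde\gamma)$ by \eqref{kontrktn}. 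We then compute the local error $\tilde\tau_n$ of $\tilde{\bf y}_n$ in the Peer step \eqref{Peerivp}. The correction terms are $h_n^{s+1}\psi_n$ from the new step, minus $\bar B_n$ applied to $h_{n-1}^{s+1}\psi_{n-1}(y^\star)^{(s+1)}(t_{n-1})$, minus $h_n\bar K_n$ times the change of $F$.

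Next we show $\ltnorm\tilde\tau_n\rtnorm\le Ch_n^{s+2}\|(y^\star)^{(1:s+2)}\|_{[n]}$. The $O(h^{s+1})$ contributions cancel by the choice of $\psi_n$ as the fixed point $\psi=B_{se}\psi+b_{se}$. The remaining differences are of higher order. First, $h_n^{s+1}-h_{n-1}^{s+1}=O(h^{s+2}\bar\eta)$ because $\sigma_n=1+\eta_nh_n$. Second, $\psi_n-\psi_{n-1}=O(|\sigma_n-\sigma_{n-1}|)=O(h\bar\eta)$ by smooth dependence of $K(\sigma)$ and hence of $B_{se}(\sigma)$ on $\sigma$; if needed we use instead $\psi_n-\psi(1)=O(h\bar\eta)$ for both indices. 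Third, $(y^\star)^{(s+1)}(t_n)-(y^\star)^{(s+1)}(t_{n-1})=O(h\|(y^\star)^{(s+2)}\|)$. The term $h_n\bar K_n\big(F(\tilde{\bf y}_n)-F({\bf y}_n)\big)$ equals $h_n^{s+2}\bar K_n\bar f_{y,n}(\cdots)(y^\star)^{(s+1)}(t_n)$. This contains the stiff Jacobian $J$, and here assumption \eqref{FYysp} is essential: it bounds $f_y(u)(y^\star)^{(s+1)}(t_n)$ by $\|(y^\star)^{(1:s+2)}\|_{[n]}$ without any dependence on $\|J\|$. Together these give the $O(h^{s+2})$ bound on the modified residual with stiffness-independent constants.

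We then apply Theorem~\ref{TAWPKvg}, or more precisely its proof, with ${\bf y}_n$ replaced by $\tilde{\bf y}_n$. That theorem is a pure stability estimate for perturbed Peer steps, and only the reference sequence enters through the residual. The resulting bound is
\[
\ltnorm Y_n-\tilde{\bf y}_n\rtnorm\le e^{\varpi(t_{n+1}-t_1)}\ltnorm Y_0-\tilde{\bf y}_0\rtnorm+C\sum_k e^{\varpi(t_{n+1}-t_k)}h_k^{s+2}\|(y^\star)^{(1:s+2)}\|_{[k]},
\]
and summation over $k$ with $\sum_k h_k\le T$ gives a bound $C\max_k h_k^{s+1}\|(y^\star)^{(1:s+2)}\|_{[k]}$. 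Finally, $\ltnorm\tilde{\bf y}_n-{\bf y}_n\rtnorm=O(h_n^{s+1}\|(y^\star)^{(s+1)}\|)$, and the same holds at $n=0$. The triangle inequality, absorbing these terms into the constant, gives the claimed estimate.

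The main obstacle is the cancellation of the $\bar K_n$-term in the stiff setting. The modified solution must not introduce $J$-dependent constants, and \eqref{FYysp} has to control the mean-value Jacobians $\bar f_{y,ni}$ uniformly in the stiffness. A second delicate point is the $\sigma$-dependence of $\psi_n$: the smoothness hypotheses on $K(\sigma)$ and on the grid are needed exactly here.
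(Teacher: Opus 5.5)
Your argument is essentially the paper's. You use the same correction vector $W(0\oplus\psi_n)$, which is the paper's $v_n$ with $\breve v_{n1}=0$ and $\breve v_n^\perp=(I-B_{se})^{-1}(W^{-1}\beta_{s+1})^\perp$. You also use \eqref{kontrktn} for solvability, grid smoothness to control $h_n^{s+1}-h_{n-1}^{s+1}$ and $\psi_n-\psi_{n-1}$, and \eqref{FYysp} for the Jacobian term. The proof then closes, as in the paper, with the stability bound \eqref{globFe} and the triangle inequality.

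\textbf{Sign error in the key cancellation.} Set $u_n:=W(0\oplus\psi_n)$ and $\tilde{\bf y}_n={\bf y}_n+h_n^{s+1}u_n(y^\star)^{(s+1)}(t_n)$. The $O(h_n^{s+1})$ part of $\tilde\tau_n=\tilde{\bf y}_n-\bar B_n\tilde{\bf y}_{n-1}-h_n\bar K_nF(\tilde{\bf y}_n)$ is $h_n^{s+1}\big(\beta_{s+1}(\sigma_n)+(I-\bar B_n)u_n\big)(y^\star)^{(s+1)}(t_n)$. After multiplication by $W^{-1}$ this becomes $0\oplus\big(b_{se}+(I-B_{se})\psi_n\big)$. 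For your choice $\psi_n=B_{se}\psi_n+b_{se}$ that equals $0\oplus 2b_{se}$, so the leading terms double instead of cancelling.

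\textbf{Fix.} Take $\psi_n=-(I-B_{se}(\sigma_n))^{-1}b_{se}(\sigma_n)$. Equivalently, set $\tilde{\bf y}_n={\bf y}_n-h_n^{s+1}v_n(y^\star)^{(s+1)}(t_n)$ with $(I-\bar B_n)v_n=\beta_{s+1}(\sigma_n)$. This is what the paper's displayed residual computation effectively uses. With the sign corrected, the rest of your argument goes through.

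Your choice to perturb the exact stage values, rather than the numerical solution as the paper nominally does, is slightly cleaner. The mean-value Jacobians in the $\bar K_n$-term are then evaluated between ${\bf y}_{ni}$ and $\tilde{\bf y}_{ni}$. So \eqref{FYysp} applies directly, without first invoking Theorem~\ref{TAWPKvg} to know that $Y_n$ is $O(h^s)$-close to ${\bf y}_n$.
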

{\bf Proof}
We recall the form of the local error \eqref{lokfer} as $\tau_n=h_n^{s+1}\beta_{s+1}(\sigma_n)\otimes(y^\star)^{(s+1)}(t_n)+O(h_n^{s+2})$.
Now, the technique from \cite{SchneiderLangWeiner2021} considers a modified solution
\begin{align*}
\tilde Y_n:=Y_n-h_n^{s+1} v_n\otimes (y^\star)^{(s+1)}(t_n),\quad
(I-\bar B_n)v_n=\beta_{s+1}(\sigma_n),\; n\ge1.
\end{align*}
The tensor product has been used here in order to show the meaning more clearly, but it will be omitted below for convenience.
Obviously, the system for $v_n$ is singular, but according to the LSRK form and Lemma~\ref{LOrdsp1}, it is solvable for arbitrary $\sigma$ since $e_s\T(I-\bar B)=e_s\T\beta_{s+1}(\sigma)\equiv0$.
In fact, with block decomposition according to Lemma~\ref{LWb} we see for $\breve v_n=W^{-1}v_n$ that
\begin{align*}
 (I-W^{-1}\bar B_nW)\breve{v}_n
 =\begin{pmatrix}0&0\T\\[1mm]
 0&I-B_{se}(\sigma_n)\end{pmatrix}
  \begin{pmatrix}\breve v_{n,1}\\[1mm]
  \breve v_n^\perp \end{pmatrix}
  =\begin{pmatrix}0\\(W^{-1}\beta_{r+1}(\sigma_n))^\perp  
  \end{pmatrix},
\end{align*}
which may be solved by $\breve v_{n1}=0$, $v_n^\perp=(I-B_{se}(\sigma_n))^{-1}(W^{-1}\beta_{r+1}(\sigma_n))^\perp$, leading to $\|v_n\|=\|W\breve v_n\|_2\le cond(W)\|\beta_{s+1}(\sigma_n)\|_2/(1-\tilde\gamma)$ according to \eqref{kontrktn}.
\par
We will show now that the truncation error of $\tilde Y_n$ satisfies $\tilde\tau_n=O(h_n^{s+2})$, but some preliminary estimates are needed for that.
First, by \eqref{Bsig} and \eqref{betar} both $\bar B(\sigma)$ and $\beta_{s+1}(\sigma)$ vary smoothly in intervals $\sigma\in[\underline{\sigma},\bar\sigma]$, $\underline{\sigma}<1<\bar\sigma$, where $K(\sigma)$ ist smooth.
Hence, for smooth grids, differences of these objects evaluated at $\sigma_n$ and $\sigma_{n-1}$ are bounded by multiples of
\begin{align}\label{sminuss}
 |\sigma_n-\sigma_{n-1}|\le|\eta_n h_n+\eta_{n-1}h_{n-1}|
 \le \bar\eta(1+\underline{\sigma}^{-1})h_n.
\end{align}
Since differences of matrix inverses satisfy $\Theta_1^{-1}-\Theta_2^{-1}=\Theta_1^{-1}(\Theta_2-\Theta_1)\Theta_2^{-1}$, this leads to smooth dependence of the vectors $v_n$ itself,
\begin{align*}
 &\|\breve v_n-\breve v_{n-1}\|_2\\
 &=\|(I-B_{se}(\sigma_n))^{-1}(W^{-1}\beta_{s+1}(\sigma_n))^\perp
  -(I-B_{se}(\sigma_{n-1}))^{-1}(W^{-1}\beta_{s+1}(\sigma_{n-1}))^\perp\|_2\\
  &\le\frac{C}{(1-\tilde\gamma)^2}|\sigma_n-\sigma_{n-1}|=O(h_n)
\end{align*}
by \eqref{sminuss}.
For smooth stepsize ratios from $[\underline{\sigma},\bar\sigma]$, it also holds that
\[h_n^{s+1}-h_{n-1}^{s+1}=h_n^{s+1}(1-(1+\eta_n h_n)^{-s-1}))=O(h_n^{s+2}).\]
Now, with these preliminaries, we get for the modified truncation error that
\begin{align}\notag
\tilde\tau_n=&\;\tilde Y_n-\bar B_n\tilde Y_{n-1}-h_n\bar K_n F(\tilde Y_n)
\\\notag
 =&\;\tau_n-h_n^{s+1}v_n(y^\star)^{(s+1)}(t_n)+h_{n-1}^{s+1}\bar B_nv_{n-1}(y^\star)^{(s+1)}(t_{n-1})\\\label{FYf}
 &+h_n^{s+2}\bar K_nF_Y\cdot v_n(y^\star)^{(s+1)}(t_n)\\\notag
 =&\;\underbrace{\tau_n-h_n^{s+1}(I-\bar B_n)v_n(y^\star)^{(s+1)}(t_n)}_{=O(h_n^{s+2}\|(y^\star)^{(s+2)}\|_{[n]})}
 -(h_n^{s+1}-h_{n-1}^{s+1})\bar B_nv_n(y^\star)^{(s+1)}(t_n)\\\notag
 &\;-h_{n-1}^{s+1}\bar B_n\big(v_n(y^\star)^{(s+1)}(t_n)-v_{n-1}(y^\star)^{(s+1)}(t_{n-1}\big))
 +O(h_n^{s+2}).
\end{align}
The factor $F_Y$ in \eqref{FYf} is a block-diagonal matrix with diagonal blocks $\bar f_{y,ni}$ defined above.
Since we know by Theorem~\ref{TAWPKvg} that $Y_n$ is an $O(h^s)$ approximation of ${\bf y}_n$, the derivative $f_y$ is evaluated very close to ${y^\star}(t_{ni})$, where $t_{ni}-t_n=h_nc_i$, and for $h_n$ small enough, with assumption \eqref{FYysp} the term $F_Y\cdot v_n{y^\star}^{(s+1)}(t_n)$ may be bounded by a constant times $\|(y^\star)^{(1:s+2)}\|_{[n]}$.
Then, with $\ltnorm\bar B_n\rtnorm=1$ and $(y^\star)^{(s+1)}(t_{n})-(y^\star)^{(s+1)}(t_{n-1}))=O(h_n\|(y^\star)^{(s+2)}\|_{[n-1]})$, we obtain for the weighted residual that
\begin{align*}
 \ltnorm\tilde\tau_n\rtnorm\le&\;
 \frac{h_{n}^{s+1}}{\sigma_n^{s+1}}\|\breve v_n-\breve v_{n-1}\|\|X^{-1}{y^\star}^{(s+1)}(t_{n})\|
 +Ch_n^{s+2}\big(\|(y^\star)^{(s+2)}\|_{[n-1]}+\|(y^\star)^{(1:s+2)}\|_{[n]}\big)\\
 \le&\;Ch_n^{s+2}\big(\|(y^\star)^{(s+2)}\|_{[n-1]}+\|(y^\star)^{(1:s+2)}\|_{[n]}\big).
\end{align*}
Applying a slightly simplified version of the bound \eqref{globFe}, we get
\begin{align*}
 \ltnorm Y_n-{\bf y}_n\rtnorm
 \le&\; \ltnorm\tilde Y_n-{\bf y}_n\rtnorm+\ltnorm Y_n-\tilde Y_n\rtnorm\\
 \le&\;e^{\varpi(t_{n+1}-t_1)}\ltnorm Y_0-{\bf y}_0\rtnorm
  +C\sum_{k=1}^n \ltnorm\tilde\tau_{k}\rtnorm_2+h_n^{s+1}\|{y^\star}^{(s+1)}\|_{[k]}\\
 \le&\;e^{\varpi(t_{n+1}-t_1)}\ltnorm Y_0-{\bf y}_0\rtnorm
  +C\max_{k=1}^n h_k^{s+1}\|{y^\star}^{(1:s+2}\|_{[k]}.
\qed
\end{align*}
Theorem~\ref{Tsupknv} may be refined, if the starting procedure \eqref{Startab} is used.
\begin{lemma}
Let the assumptions of Theorem~\ref{Tsupknv} be satisfied.
Assume that the matrix $A_0$ in the starting step \eqref{Startab} possesses only eigenvalues with positive real part, $\Re\lambda_i(A)\ge d_{\min}>0$, and satisfies the order conditions \eqref{OBSred} for local order $s+1$.
Then, a simplified bound for the global error of the Peer two-step method is
\begin{align*}
\ltnorm Y_n-{\bf y}_n\rtnorm\le
 C\max_{k=0}^n h_k^{s+1}\|{y^\star}^{(1:s+2)}\|_{[k]},\ n\ge0.
\end{align*}
\end{lemma}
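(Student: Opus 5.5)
The plan is to combine the global estimate of Theorem~\ref{Tsupknv} with a bound on the starting error $\ltnorm Y_0-{\bf y}_0\rtnorm$. Since $e^{\varpi(t_{n+1}-t_1)}\le e^{|\varpi|T}$ is a constant, it is enough to show
\begin{align*}
\ltnorm Y_0-{\bf y}_0\rtnorm\le C h_0^{s+1}\|(y^\star)^{(1:s+2)}\|_{[0]}.
\end{align*}
This term then enters the maximum with index $k=0$, which is why the maximum in the statement starts at $k=0$. For $n=0$ the assertion is exactly this starting bound.

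For the starting error I would follow the proof of Theorem~\ref{TAWPKvg}, applied to the one step \eqref{Startab} with $K_0=I$. Subtracting the exact-solution version of \eqref{Startab}, whose residual is $\tau_0=O(h_0^{s+1})$ by the local order $s+1$ from \eqref{OBSred} and \eqref{lokfe0}, the error $\check Y_0$ satisfies
\begin{align*}
(A_0-h_0I\otimes J)\check Y_0=h_0\big(G({\bf y}_0+\check Y_0)-G({\bf y}_0)\big)-\tau_0 .
\end{align*}
Here I take $\tau_0$ in the unscaled form $A_0{\bf y}_0-a\otimes y_0-h_0b\otimes f(y_0)-h_0{\bf y}_0'$, which is $A_0$ times the scaled residual in \eqref{lokfe0}. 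Note that $y_0$ itself is exact, so no earlier error is propagated.

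Lemma~\ref{LKIJ} with the constant matrix $A=A_0$ and $\Re\lambda_i(A_0)\ge d_{\min}>0$ gives $\ltnorm(A_0-h_0I\otimes J)^{-1}\rtnorm\le\theta_0$, and $\theta_0$ is independent of $\|J\|$. The fixed-point argument of Theorem~\ref{TAWPKvg} then applies: the map has Lipschitz constant $h_0\theta_0L_g\,cond(W\otimes X)\le 1/2$ for small $h_0$. This yields existence and
\begin{align*}
\ltnorm\check Y_0\rtnorm\le 2\theta_0\ltnorm\tau_0\rtnorm .
\end{align*}

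The main obstacle is bounding $\ltnorm\tau_0\rtnorm$ by $h_0^{s+1}$ times derivative norms with constants that do not depend on the stiffness. The Taylor remainder of \eqref{OBstrt} involves only $y^{(s+1)}$ and $y^{(s+2)}$ on $[t_0,t_1]$, multiplied by the fixed coefficients $A_0$, $a$ and $b$. In the weighted norm this gives a factor $cond(W\otimes X)$, which is moderate by the assumption on $X$ already used in the remark after Theorem~\ref{TAWPKvg}. No Jacobian appears in the residual of the exact solution, so no $\|J\|$-dependence enters.

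Finally, $h_0$ and $h_k$ are related through the bounded stepsize ratios. Combining the starting bound with Theorem~\ref{Tsupknv} gives the stated inequality for all $n\ge0$.
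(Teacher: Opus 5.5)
Your proposal is correct and matches the paper's proof. The paper likewise writes the starting error as a fixed-point equation $\check Y_0=r_0+h_0\Phi_0(\check Y_0)$ and applies Lemma~\ref{LKIJ} with the Schur form of the non-triangular $A_0$ to get a stiffness-independent Lipschitz constant, concluding $\ltnorm\check Y_0\rtnorm\le 2\ltnorm r_0\rtnorm\le Ch_0^{s+1}\|y^{(s+1)}\|_{[0]}$ before inserting this into Theorem~\ref{Tsupknv}; your final remark relating $h_0$ to $h_k$ through the stepsize ratios is unnecessary, since the maximum already includes $k=0$.
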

{\bf Proof}
According to \eqref{lokfe0}, the error equation for the start \eqref{Startab} is
\begin{align*}
 \check Y_0=(A_0-h_0{\bf J}_0)^{-1}\big(h_0(g({\bf y}_0+\check Y_0)-g({\bf y}_0))-A_0\tau_0\big),
\end{align*}
which may also be written as $\check Y_0=r_0+h_0\Phi_0(\check Y_0)$.
Since $A_0$ is no longer triangular, its Schur form is used in Lemma~\ref{LKIJ}, leading again to a Lipschitz condition for $h_0\Phi_0$ with constant $h_0L_\Phi=h_0\theta L_g$.
For small $h_0\le 1/(2L_\Phi)$, also $\|\tau_0\|=O(h_0^{s+1}\|y^{(s+1)}\|_{[0]})$ will be small enough and we know that a unique solution $\check Y_0$ exists satisfying $\ltnorm\check Y_0\rtnorm\le2\ltnorm r_0\rtnorm\le Ch_0^{s+1}\|y^{(s+1)}\|_{[0]}$.
\qed\\
\par\noindent
We note that these Theorems and the Lemma apply to all our methods with the data given in Table~\ref{TPT}.
\section{Boundary value problems in optimal control}\label{SAPOC}
In ODE constrained optimal control problems there is an additional control variable $u(t)\in\R^d$ in the right-hand side of the differential equation like $\hat f(y,u)=Jy+\hat g(y,u)$, and the minimum of some objective function $\CC(y(T))$ of the final state over a closed and convex admissible control set $U_{ad}\in\R^d$ is sought for.
The Karush-Kuhn-Tucker (KKT) optimality conditions lead to an additional adjoint differential equation for a Lagrange multiplier $p(t)\in\R^m$.
Under suitable assumptions, the control can be recovered from the other variables in the form $u(y,p)$ and eliminated formally leading to a two-point boundary value problem on $[0,T]$ of the form 
\begin{align}\label{AWPyp}
 y'(t)=&\;Jy(t)+g\big(y(t),p(t)\big),\ y(0)=y_0,\\\label{ADglyp}
 p'(t)=&\;-J\T p(t)+\psi\big(y(t),p(t)\big),\ p(T)=\nabla_y\CC\big(y(T)\big)\T,
\end{align}
see, e.g., \cite{LangSchmitt2025a}. The functions $g,\psi$ are defined by
\begin{align*}
 g(y,p)=\hat g\big(y,u(y,p)\big),\quad 
 \psi(y,p)=-\nabla_y\hat g\big(y,u(y,p)\big)\T p.
\end{align*}
$(y^\star,p^\star)$ will be the exact solution of \eqref{AWPyp}, \eqref{ADglyp}.
For simplicity, we will treat this problem formulation here, since the discussion of the original problem is much more elaborate, see \cite{LangSchmitt2023,LangSchmitt2026a}.
In the FDTO approach ({\em First Discretize Then Optimize}), the KKT conditions for the discrete initial value problem with the Peer method are considered, which lead to adjoint time steps for discrete Lagrange multipliers $P_n=\big(P_{nj}\big)_{j=1}^s\in\R^{sm}$.
Also the adjoint Peer steps should lead to good approximations for the adjoint differential equation \eqref{ADglyp}.
Hence, the Peer methods have to satisfy additional adjoint order conditions.
This additional demand may be ameliorated by considering redundant formulations of the Peer methods with three coefficient matrices $(A,B_n,K)$ for steps from the interior of the grid, where $K\in\R^{s\times s}$ is a constant positive definite diagonal matrix here.
In addition, slightly different methods are used with coefficients $(A_0,K)$ for the start and $(A_N,B_N,K)$ for the end step.
So, the discrete boundary value problem considered in \cite{LangSchmitt2025a} becomes
\begin{align}\label{VPMstrt}
A_0Y_0=&\;a\otimes y_0+h_0 K(JY_0+G(Y_0,P_0)),\\\label{VPMstd}
A_nY_n=&\;B_nY_{n-1}+h_nK(JY_n+G(Y_n,P_n)),\ 1\le n\le N,\\\label{APMstd}
A_n\T P_n=&\;B_{n+1}\T P_{n+1}+h_nK(J\T P_n-\Psi(Y_n,P_n)),\ 0\le n\le N-1,\\\label{APMend}
A_N\T P_N=&\;w \otimes\nabla_y\CC(y_h(T))\T+h_NK(J\T P_N-\Psi(Y_N,P_N)).
\end{align}
Here, $a=A_0\eins$ and the approximation at the end point is $y_h(T)=\sum_{i=1}^s w_i Y_{Ni}$ with $w=A_N\T\eins$.
For the Dahlquist test equation, the adjoint step \eqref{APMstd} reduces to $P_n=\Rc_{n+1}^\dagger(z)P_{n+1}$ with the stability function
\begin{align}\label{Stfna}
 \Rc_{n+1}^\dagger(z)=(A_n\T-zK)^{-1}B_{n+1}\T
 =\big(B_{n+1}A^{-1}(I-zKA_n^{-1})^{-1}\big)\T,
\end{align}
$0\le n\le N-1$. Since $\Rc_{n+1}^\dagger(z)$ is related to $\Rc_{n+1}(z)$ by transposition and a commutation of factors, both possess the same eigenvalues.
Relations for the spectral norm, however, are not as obvious.
\par
The truncation error of order $q\le s$ of the adjoint step \eqref{APMstd} is
\begin{align}\label{lokfead}
 \tau_n^P=&\;h_n^q\beta_q^\dagger(\sigma_{n+1})p^{(q)}(t_n)+O(h_n^{q+1}\|p^{(q+1)}\|_{[n]}),\\\notag
 \beta_q^\dagger(\sigma):=&\;\frac1{q!}A_n\mT(A_n\T\cc^q-B_{n+1}\T(\eins+\sigma\cc)^q+qK\cc^{q-1}),
\end{align}
if the terms of lower order vanish, i.e., $\beta_1^\dagger=\ldots=\beta_{q-1}^\dagger\equiv0$, \cite{LangSchmitt2025a}.
For the adjoint end step \eqref{APMend}, the corresponding expressions are
\begin{align}\label{lokfean}
 \tau_N^P=&\;h_N^q\beta_{q,N}^\dagger p^{(q)}(t_N)+O(h_N^{q+1}\|p^{(q+1)}\|_{[N]}),\\\notag
 \beta_{q,N}^\dagger:=&\;\frac1{q!}(\cc^q+A_n\mT(qK\cc^{q-1}-w)\big).
\end{align}
For general $\sigma_n$, the combination with the forward conditions \eqref{Ordvr} leads to strong restrictions on all coefficients of the method and may be satisfied only if the matrix
\begin{align}\label{Qqr}
 \QQ_{q,r}:=V_q\T B(\sigma)V_r\PP_r^{-1}=e_1e_1\T\in \R^{q\times r}
\end{align}
related to $B(\sigma)$ is independent of $\sigma$, \cite{LangSchmitt2025a}.
The other coefficients $A$ and $K$ are also fixed to a large extend.
Hence, in order to leave room in $B(\sigma)$ for $\sigma$-dependency, the order $q$ of {\em adjoint Peer methods} is below $s$.
In fact, for the \textit{Pulcherrima Triplet} below, $q=r=s-1$ holds.
\subsection{Norms for the Pulcherrima Triplet \texttt{AP4o33vgi}}
We will only discuss the extraordinary  triplet \texttt{AP4o33vgi} from  \cite{LangSchmitt2025a,LangSchmitt2026a} being its own adjoint method, which means that the whole triplet (including the boundary steps) is invariant under a flip of the integration interval $[0,T]$.
It is based on the nodes $\cc\T=(0,\frac13,\frac23,1)$ and the last stage in LSRK form uses the weights $e_4\T\bar K=\frac18(1,3,3,1)$ of the pulcherrima quadrature rule.
Its standard method is $L(\alpha)$-stable with $\alpha=61.69^o$ and $\tan\alpha\doteq1.85$.
\par
As a consequence of \eqref{Qqr}, its coefficient matrices are fixed to a large extend.
Of relevance for the convergence analysis below is the form of the congruent matrix
\begin{align*}
 \hat B(\sigma)=V_s\T B(\sigma) V_s=\begin{pmatrix}
  1&1&1&1\\
  \cdot&\cdot&\cdot&\frac1{36\sigma}\\
  \cdot&\cdot&\cdot&\cdot\\
  \cdot&\frac{\sigma}{36}&\frac{\sigma}{18}&\hat b_{44}(\sigma)
 \end{pmatrix},\; \hat b_{44}(\sigma)=\frac{132\sigma+65/\sigma-149}{804}.
\end{align*}
It shows that $\bar B(\sigma)=A^{-1}V_s\mT\hat B(\sigma)V_s^{-1}$ is a smooth function of $\sigma$ near $\sigma=1$.
\par
The matrix $\bar B(\sigma)$ of the forward step of \texttt{AP4o33vgi} is decoupled by a simple weight matrix \eqref{blkivp} with additional scaling in the form
\[ W=\begin{pmatrix}
 1&7& \cdot&\cdot\\[1mm]
 1&\frac{16}5&1&\cdot\\[1mm]
 1&\frac34&\frac34&\frac{14}{25}\\[1mm]
 1&\cdot&\cdot&\cdot
\end{pmatrix}.\]
The norm criterion with no damping ($\nu=0$) can be verified now with a nearly maximal aperture of $\eta_0=17/10\doteq\tan(59.5^o)$ for $\sigma\in{\cal S}=\{\frac9{10},1,\frac{10}9\}$.
On this set, $\hat\zeta_2$ and $\hat\zeta_3$ have positive coefficients for $\ell_2=0$, $\ell_3=2$, while positivity of $\hat\zeta_4$ requires larger exponents $\ell_4=61$ ($\sigma=0.9$), $\ell_4=32$ ($\sigma=1$), and $\ell_4=53$ ($\sigma=10/9$), showing for \texttt{AP4o33vgi} that
\begin{align}\label{BAP4o33}
 \ltnorm(I-z\bar K_n)^{-1}\bar B_n\rtnorm\le1,\ |\Im z|\le-\frac{17}{10}\Re z,
  \ \sigma_n\in\Sc=\Big\{\frac{9}{10},1,\frac{10}{9}\Big\}.
\end{align}
Reducing the aperture to $\eta=14/10\doteq\tan(54.4^o)$, the criterion works with damping $\nu=1/40$ on the same set ${\cal S}$ with the choices
\[\begin{array}{l|c|c|c|}
 k=&2&3&4\\\hline
 \ell_k=&0&2&75
\end{array}\,.
\]
Summarizing, for \texttt{AP4o33vgi} the following bound was proved:
\begin{align}\notag
 \ltnorm(I-z\bar K_n)^{-1}\bar B_n\rtnorm\le\frac1{|1-z/40|},\ |\Im z|\le-\frac{14}{10}\Re z,
  \ \sigma_n\in\Sc.
\end{align}
Considering intervals ${\cal S}=[\underline\sigma,\bar\sigma]$, for $k=4$ the only negative coefficient of $(1+\xi)^{74}\zeta_4(\xi)$ with $\sigma=10/9$ is that of $\xi^{64}$.
Now, with common denominator $(1+\delta)^2$ and $\delta_0=1/9$, the lower bounds \eqref{zetadlt} for the coefficients $\hat\zeta_{4,64}$, $\hat\zeta_{4,65}$ with $\ell_4=75$ are
\begin{align*}
 \hat\zeta_{4,64}\ge&\; 8.2\cdot 10^{12}-6.86\cdot 10^{13}|\delta|,\\
 \hat\zeta_{4,65}\ge&\; 1.93\cdot 10^{12}-1.74\cdot10^{13}|\delta|,
\end{align*}
in floating point form yielding $\hat\delta\doteq\min\{0.11,\delta_0\}$.
\par
The matrix of zero-stability of the adjoint standard step \eqref{APMstd} is
$A\mT B_n\T=(B_nA^{-1})\T=\big(A(A^{-1}B_n)A^{-1}\big)\T$
and obviously possesses the same eigenvalues as that of the forward scheme.
With respect to the full adjoint stability matrix \eqref{Stfna} there seems to be no simple way to transform the estimate \eqref{BAP4o33}, since both of its factors $B(\sigma)A^{-1}$ and $I-zKA_n^{-1}$ depend on parameters and cannot be incorporated in a fixed weight matrix $W^\dagger$. 
Instead we have to repeat the reformulations from Lemma~\ref{LOmdefin} for this case.
Since Pulcherrima is its own adjoint \cite{LangSchmitt2025a}, also its adjoint step has LSRK form, which means that $e_1\T A\mT B\T =e_1\T$.
Hence, the condition of lowest order in \eqref{lokfead} implies $A\mT B\T\eins=\eins$.
With these a simple first transformation of $B(\sigma)A^{-1}$ to block-diagonal form is obtained with $W_a^{\pm1}=I\mp e_1(\eins-e_1)\T$ satisfying $e_1\T W_a=\eins\T$.
Application of the norm criterion to Pulcherrima requires an additional scaling of the last $s-1$ columns, leading to the weight
\begin{align*}
 W^\dagger=\begin{pmatrix}
 1&-\frac{727}{816}&-\frac6{11}&-\frac17\\[1mm]
 \cdot&\frac{29}{16}&\cdot&\cdot\\[1mm]
 \cdot&-\frac43&1&\cdot\\[1mm]
 \cdot&\frac7{17}&-\frac5{11}&\frac17
 \end{pmatrix}.
\end{align*}
With $M=(W^\dagger)^{-1} B(\sigma)A^{-1}W^\dagger$ and $\Gamma=(W^\dagger)^{-1} KA^{-1}W^\dagger$ and appropriate substitutions, we now have with $z=-\xi(1+\imag\eta)$ and fixed $\eta\ge0$ that the norm condition $|1-\nu z|^2\|M(I-z\Gamma)^{-1}x\|_2^2\le\|x\|_2^2$ is equivalent with $y^\ast \Omega^\dagger(\xi)y\ge0$, $y\in\C^s$, $\xi\ge0$, where
\begin{align}\label{Omadef}
\Omega^\dagger(\xi):=
I-M\T M+2\xi\big(\Gamma_S-\imag\eta \Gamma_A-\nu M\T M)+\xi^2(1+\eta^2)(\Gamma\T \Gamma-\nu^2 M\T M),
\end{align}
which differs from $\Omega(\xi)$ by one sign and commuted factors only.
\par
In the convergence analysis below for the boundary value problem \eqref{AWPyp}, \eqref{ADglyp}, only the norm bound without damping will be used.
Hence we will discuss its verification for the case $\nu=0$ only for the sector with $\eta_0=17/10\doteq\tan(59.5^o)$ as before.
Here, we see positivity of all coefficients of the modified principal minors $(1+\xi)^{\ell_k}\zeta_k^\dagger(\xi)$ belonging to \eqref{Omadef} with $\ell_2=1,\,\ell_3=2$ for $\sigma\in{\cal S}=\{0.9,1,10/9\}$ and 
\begin{align*}
 \begin{array}{l|c|c|c|}
  \sigma=&0.9&1&10/9\\\hline
  \ell_4=&44&31&64
 \end{array}\,,
\end{align*}
showing for \eqref{Stfna} that 
\begin{align}\label{BAP4o33a}
\ltnorm \Rc_{n+1}^\dagger(z)\rtnorm\le1,\ |\Im z|\le-\frac{17}{10}\Re z,
  \ \sigma_n\in{\cal S}.
\end{align}
\subsection{Global error estimate}
Estimates for the global error of the boundary value problem may be derived by a straightforward modification of the analysis from \cite{LangSchmitt2025a}.
The essential difference is that now the stiff linear part with the Matrix $J$ will also be treated implicitly and Lipschitz constants of the nonstiff parts $g,\psi$ only are used.
Introducing again numerical grid vectors $Y=(Y_n)_{n=0}^N,\,P=(P_n)_{n=0}^N$ and those for the exact solution as ${\bf y}=({\bf y}_n)_{n=0}^N,\,{\bf p}=({\bf p}_n)_{n=0}^N$, where ${\bf p}_n=\big(p^\star(t_{nj})\big)_{j=1}^s$, $0\le n\le N$, we consider the errors $\check Y_n:=Y_n-{\bf y}_,\,\check P_n:=P_n-{\bf p}_n$, $0\le n\le N$.
In order to avoid doubling of expressions, we also introduce the combined variable $Z=(Y\T,P\T)\T$ and also $G(Z_n):=G(Y_n,P_n),\,\Psi(Z_n):=\Psi(Y_n,P_n)$. 
Subtracting the defining equations of the Peer steps from the same equations, where solution values are used, we arrive at error equations similar to \eqref{FeGl}.
For the adjoint end condition \eqref{APMend}, it reads
\begin{align*}
 (I-h_N\tilde K_NJ\T)\check P_N=&\;\eins\otimes\big(\nabla_y\CC(y_h(T))-\nabla_y\CC(y(T))\big)
 -h_N\tilde K_N\big(\Psi({\bf z}_n+\check Z_n)-\Psi({\bf z}_n)\big)-\tau_N^P,
\end{align*}
with the abbreviation $\tilde K_n:=A_n\mT K$, and $\tilde B_{n+1}:=A_n\mT B_{n+1}\T$ to be used later.
For a polynomial objective function $\CC$ of degree 2, the corresponding term is linear in the error $\check Y_N$ and requires an additional discussion as in \cite{LangSchmitt2025a}.
However, for the sake of shortness, we refrain to linear objective functions, where $\nabla_y\CC$ is a constant vector disappearing in the error equation.
Then the set of error equations becomes
\begin{align}\label{VPEstrt}
 \check Y_0=&\;R_0^Y(\check Z)-(I-h_0\bar K_0J)^{-1}\tau_0^Y,\\\label{VPEstd}
 -\Rc_n(h_nJ)Y_{n-1}+Y_n=&\;R_n^Y(\check Z)-(I-h_n\bar K_nJ)^{-1}\tau_n^Y,\,1\le n\le N,\\\label{APEstd}
  \check P_n-\Rc_{n+1}^\dagger(h_nJ\T)\check P_{n+1}=&\;
  R_n^P(\check Z)-(I-h_n\tilde K_nJ\T)^{-1}\tau_n^P,\ n=0,\ldots,N-1,\\\label{APEend}
 \check P_N
 =&\;R_N^P(\check Z)-(I-h_N\tilde K_NJ\T)^{-1}\tau_N^P,
\end{align} 
with functions $R_n$ defined for $0\le n\le N$ by
\begin{align}\label{DefRny}
 R_n^Y(\check Z):=&\;h_n(\bar K_n^{-1}-h_nI\otimes J)^{-1}\big(G({\bf z}_n+\check Z_n)-G({\bf z}_n)\big),\\\label{DefRnp}
 R_n^P(\check Z):=&\;h_n(\tilde K_n^{-1}-h_nI\otimes J\T)^{-1}\big(\Psi({\bf z}_n+\check Z_n)-\Psi({\bf z}_n)\big).
\end{align}
We note that by using a diagonal matrix $K$ in all time steps, the adjoint error equations are simplified compared to \cite{LangSchmitt2025a}.
We also see here that the redundant formulation of adjoint Peer methods leads to different matrices $\bar K_n=A_n^{-1}K \not=\tilde K_n\T=KA_n^{-1}$, yielding additional degrees of freedom for the triplet.
Ordering terms to the left and right hand sides as in \eqref{VPEstrt}--\eqref{APEend}, the global error equation takes the form
\begin{align}\label{BVPFP}
\MM\check Z
 =-\hat\tau+R(\check Z)
 :=\begin{pmatrix}
 -\hat\tau^Y +R^Y(\check Z)\\
 -\hat\tau^P +R^P(\check Z)
 \end{pmatrix},
 \ \MM:=\begin{pmatrix}
  M_{11}&0\\0&M_{22}
 \end{pmatrix},
\end{align}
where $\hat\tau_n^Y:=(I-h_n\bar K_n)^{-1}\tau_n^Y$, $\hat\tau_n^P:=(I-h_n\tilde K_n)^{-1}\tau_n^P$ and $R(0)=0$.
The matrices $M_{11},M_{22}\in\R^{(N+1)sm\times (N+1)sm}$, are block-bi-diagonal matrices, where the sub-diagonal blocks of size $sm$ in $M_{11}$ are $-\Rc_n(h_nJ)$ from \eqref{VPEstd} and the left-hand side of \eqref{APEstd} shows the super-diagonal blocks $-\Rc_{n+1}^\dagger(h_nJ\T)$ in $M_{22}$ of the same size.
\par
The error for the boundary value problem needs to be analyzed on a global scale with norms on the whole grid for both the state and adjoint variables.
Tight bounds for $\Rc^\dagger(hJ\T)$ are possible in the adjoint norm
\begin{align*}
 \ltnorm \Theta\rtnorm^\dagger:
 =\|((W^\dagger)^{-1}\otimes X)\T\Theta(W^\dagger\otimes X^{-1})\T)\|_2,
 \ \Theta\in\C^{(sm)\times(sm)},
\end{align*}
which is induced by $\ltnorm P_n\rtnorm^\dagger:
 =\|((W^\dagger)^{-1}\otimes X)\T P_n\|_2$. Together with \eqref{WXnorm}, we define for $Z_n=(Y_n\T,P_n\T)\T$ the norm $\ltnorm Z_n\rtnorm:=\max\{\ltnorm Y_n\rtnorm,\ltnorm P_n\rtnorm\}$ and 
for the whole grid vector $Z:=(Z_n)_{n=0}^N$ the global norm
\begin{align*}
 \ltnorm Z\rtnorm:=\max_{n=0}^N\ltnorm Z_n\rtnorm\,.
\end{align*}
Besides the assumption \eqref{XJX} on the stiff part of the ODE, we need again Lipschitz conditions on the nonlinear functions like
\begin{align}\label{LipRWP}
\left.\begin{array}{c}
 \|g(\hat y,\hat p)-g(\tilde y,\tilde p)\|_2\\[1mm]
 \|\psi(\hat y,\hat p)-\psi(\tilde y,\tilde p)\|_2
 \end{array}\right\}
 \le L_{g\psi}\big(\|\hat y-\tilde y\|_2+\|\hat p-\tilde p\|_2\big),
\ \hat y,\tilde y,\hat p,\tilde p\in\R^m.
\end{align}
\begin{theorem}\label{TAdFe}
Consider an adjoint Peer triplet $(A_0,A,A_N,B(\sigma),K)$, where the local errors satisfy \eqref{lokfer}, \eqref{lokfe0}, \eqref{lokfead}, \eqref{lokfean} with $2\le q=r<s$.
Let all real parts of eigenvalues of $K^{-1}A_0,K^{-1}A,K^{-1}A_N$ be bounded from below by $d_{\min}>0$ and assume that there exist constant weight matrices $W,W^\dagger\in\R^{s\times s}$ such that with $M:=W^{-1}\bar B(\sigma)W$, $\Gamma:=W^{-1}\bar KW$ and for $\nu=0$ and some $\eta>0$, the matrix $\Omega(\xi)$ from \eqref{Omdef} is semi-definite for $\xi\ge0,\,\sigma\in{\cal S}$, and in \eqref{Omadef} it holds that $\Omega^\dagger(\xi)\succeq0\;\forall \xi\ge 0,\,\sigma\in{\cal S}$ with $M=(W^\dagger)^{-1}B(\sigma)A^{-1}W^\dagger$, $\Gamma=(W^\dagger)^{-1}KA^{-1}W^\dagger$.
Let the matrix $J=X\Lambda X^{-1}$ in \eqref{AWPyp} with diagonal $\Lambda\in\C^{m\times m}$ have only eigenvalues satisfying $|\Im\lambda(J)|\le-\eta\Re \lambda(J)$, let the Lipschitz condition \eqref{LipRWP} hold for the nonlinear parts $g$ and $\psi$ and $y^\star,p^\star\in C^q[0,T]$ for the solution.
Then, there is a constant $\chi$ depending on the Peer triplet only such that if
\begin{align}\label{ExRWP}
 \chi\,TL_{g\psi}\le\frac12
\end{align}
and $H:=\max_{n=0}^N h_n$ is small enough, a unique solution to the discrete boundary value problem \eqref{VPMstrt}--\eqref{APMend} exists on grids with $\sigma_n\in{\cal S}\subseteq[\underline\sigma,\bar\sigma]$, $1\le n\le N$, and the error satisfies
\begin{align}\label{GlbFeRWP}
 \ltnorm Y_n-{\bf y}_n\rtnorm,\,\ltnorm P_n-{\bf p}_n\rtnorm
 \le C \max_{n=0}^N h_n^{q-1}\max\{\|y^{(q)}\|_{[n]},\|p^{(q)}\|_{[n]}\},
 \,0\le n\le N.
\end{align}
\end{theorem}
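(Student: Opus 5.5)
The plan is to treat the global error equation \eqref{BVPFP} as a fixed point problem $\check Z=\MM^{-1}\big(-\hat\tau+R(\check Z)\big)$ in the global norm $\ltnorm\cdot\rtnorm$. We bound $\MM^{-1}$ uniformly in $N$ and in the stiffness, show that $R$ is Lipschitz with constant $O(TL_{g\psi})$ after applying $\MM^{-1}$, and then use Banach's fixed point theorem under \eqref{ExRWP}. The error estimate then follows from the bound on $\MM^{-1}\hat\tau$.

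\emph{Step 1: inverting $\MM$.} The matrix $\MM$ is block diagonal, so we treat $M_{11}$ and $M_{22}$ separately.
\begin{itemize}
\item $M_{11}$ is block lower bidiagonal with identity diagonal blocks. Its inverse has blocks $\Rc_n(h_nJ)\cdots\Rc_{k+1}(h_{k+1}J)$ for $k\le n$.
\item With the weight $W\otimes X$, the Dahlquist decoupling gives $\ltnorm\Rc_j(h_jJ)\rtnorm=\max_i\|W^{-1}\Rc_j(h_j\lambda_i)W\|_2$.
\item Since $\Omega(\xi)\succeq0$ and $\nu=0$, Lemma~\ref{LOmdefin} and the maximum principle give $\ltnorm\Rc_j(h_jJ)\rtnorm\le1$ on the sector containing the spectrum of $J$. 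Hence all products are bounded by $1$.
\item Likewise $M_{22}$ is block upper bidiagonal. Its inverse consists of products of $\Rc^\dagger_{j+1}(h_jJ\T)$, which are bounded by $1$ in $\ltnorm\cdot\rtnorm^\dagger$ by the analogous argument with $\Omega^\dagger\succeq0$ (cf.\ \eqref{BAP4o33a}). Here one checks that the weight $((W^\dagger)^{-1}\otimes X)\T$ diagonalizes $J\T$ consistently.
\end{itemize}
Consequently, for any grid vector $r$, $\ltnorm(\MM^{-1}r)_n\rtnorm\le\sum_{k}\ltnorm r_k\rtnorm$, the sum running over $k\le n$ for $Y$ and $k\ge n$ for $P$.

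\emph{Step 2: the nonlinear terms.} By Lemma~\ref{LKIJ} applied with $A=\bar K_n^{-1}$, respectively $\tilde K_n^{-1}$ (eigenvalues having real part at least $d_{\min}$, using Schur forms for the non-triangular start and end steps), together with \eqref{LipRWP}, we get
\[
\ltnorm R_n^{Y,P}(\hat Z)-R_n^{Y,P}(\tilde Z)\rtnorm\le h_n\theta\,c\,L_{g\psi}\ltnorm\hat Z_n-\tilde Z_n\rtnorm .
\]
The factor $c$ collects the condition numbers of $W\otimes X$ and $W^\dagger\otimes X$; both the state and adjoint components enter, so that $Y$ and $P$ errors are coupled. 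Summing over $n$ with Step 1 gives a Lipschitz constant $\sum_n h_n\theta cL_{g\psi}\le\chi TL_{g\psi}\le\frac12$. This defines $\chi$, which depends only on the triplet through $\theta$, $W$, $W^\dagger$ and $d_{\min}$, and on $X$. The map is therefore a contraction, giving a unique solution in a ball, with
\[
\ltnorm\check Z\rtnorm\le2\,\ltnorm\MM^{-1}\hat\tau\rtnorm .
\]

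\emph{Step 3: the truncation error.} We have $\ltnorm\hat\tau_n\rtnorm\le\theta'\ltnorm\tau_n\rtnorm$, where $(I-h_n\bar K_nJ)^{-1}$ is bounded by Lemma~\ref{LKIJ} (rewritten as $(\bar K_n^{-1}-h_nJ)^{-1}\bar K_n^{-1}$). By \eqref{lokfer}, \eqref{lokfe0}, \eqref{lokfead} and \eqref{lokfean}, each $\tau_n=O(h_n^q\max\{\|y^{(q)}\|_{[n]},\|p^{(q)}\|_{[n]}\})$; the higher-order remainders are controlled by Taylor expansion with integral remainder, so that only $C^q$ smoothness is needed. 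Summing $N\sim T/H$ such terms through $\MM^{-1}$ gives $\sum_n h_n\cdot h_n^{q-1}(\cdots)\le T\max_n h_n^{q-1}(\cdots)$, which is \eqref{GlbFeRWP}. The smallness of $H$ is needed only so that the ball from Step 2 stays in the region where the Lipschitz bounds and the existence arguments apply.

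The main obstacle is Step 1 for the adjoint part. One must verify that $\Omega^\dagger\succeq0$ really yields $\ltnorm\Rc^\dagger_{n+1}(h_nJ\T)\rtnorm^\dagger\le1$ with the transposed, $X$-weighted norm, since $J\T=X^{-\sf T}\Lambda X\T$. The equivalence stated at \eqref{Omadef}, applied eigenvalue-wise, plus the maximum principle over the sector, should settle this. The rest is bookkeeping of constants independent of $\|J\|$.
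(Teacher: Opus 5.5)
\textbf{Gap: the boundary stability factors.} Your architecture is the paper's. You set up the fixed point $\check Z=\MM^{-1}(R(\check Z)-\hat\tau)$ in the max-over-grid norm and write the blocks of $\MM^{-1}$ as products of stability matrices. You use Lemma~\ref{LKIJ} both for $R_n^Y,R_n^P$ and for the factors in $\hat\tau_n$, get contraction under \eqref{ExRWP} and $\ltnorm\check Z\rtnorm\le2\ltnorm\MM^{-1}\hat\tau\rtnorm$, and lose one order by summing $h_k\cdot h_k^{-1}\ltnorm\hat\tau_k\rtnorm$.

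The gap is in Step~1, where you conclude that \emph{all} factors in these products are bounded by one. That holds only for the standard steps.
\begin{itemize}
\item The forward factor $\Rc_N(z)=(A_N-zK)^{-1}B(\sigma_N)$ is built from the end-step matrix $A_N$.
\item The adjoint factor $\Rc_1^\dagger(z)=(A_0\T-zK)^{-1}B(\sigma_1)\T$ is built from the starting matrix $A_0$.
\end{itemize}
The hypotheses $\Omega\succeq0$ and $\Omega^\dagger\succeq0$ involve only $A^{-1}B(\sigma),A^{-1}K$ and $B(\sigma)A^{-1},KA^{-1}$. They say nothing about these two matrices, and $\ltnorm\Rc_N(h_NJ)\rtnorm$ need not be $\le1$. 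Every $(M_{11}^{-1})_{Nk}$ with $k<N$ contains $\Rc_N$, and every $(M_{22}^{-1})_{0k}$ with $k>0$ contains $\Rc_1^\dagger$. So your bound $\ltnorm(\MM^{-1}r)_n\rtnorm\le\sum_k\ltnorm r_k\rtnorm$ is unjustified for $n=N$ in the state part and $n=0$ in the adjoint part.

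\textbf{Repair.} This is where the eigenvalue hypothesis on $K^{-1}A_N$ and $K^{-1}A_0$ is needed beyond the nonlinear terms. The paper singles out exactly these two factors. Writing $\Rc_N(z)=(K^{-1}A_N-z)^{-1}K^{-1}B(\sigma_N)$, Lemma~\ref{LKIJ} gives a stiffness-independent bound $\ltnorm\Rc_N(h_NJ)\rtnorm\le\theta_N\|W^{-1}K^{-1}B(\sigma_N)W\|_2$. The analogous bound holds for $\Rc_1^\dagger(h_0J\T)$ in the adjoint norm. Each product contains at most one such factor. Hence $\sum_kh_k\ltnorm(M_{11}^{-1})_{nk}\rtnorm$ and its adjoint counterpart are bounded by a triplet-dependent constant times $T$, which is the role of \eqref{MInv}. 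That constant must be carried into $\chi$ and $C$; with this repair the rest of your argument goes through.

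\textbf{On your flagged obstacle.} The equivalence behind \eqref{Omadef} controls $\|(W^\dagger)\T\Rc^\dagger(z)(W^\dagger)\mT\|_2$. The adjoint weight consistent with it is therefore $(W^\dagger\otimes X)\T$, which also diagonalizes $J\T$ since $X\T J\T X\mT=\Lambda$. It is not $((W^\dagger)^{-1}\otimes X)\T$, so check this step rather than assume it. The paper's proof also just cites \eqref{BAP4o33a} here.
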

{\bf Proof}
Since the shape of the right-hand sides $R_n^Y,R_n^P$ is very similar and simple, its analysis is straightforward.
However, technical difficulties arise since different norms for the state and adjoint variables have to be used.
Considering two vectors $\hat Z_n,\tilde Z_n$ near the origin, with Lemma~\ref{LKIJ} the Lipschitz difference $\ltnorm R_n^Y(\hat Z_n)-R_n^Y(\tilde Z_n)\rtnorm$ of \eqref{DefRny} may be bounded as
\begin{align}\notag
 & \|(W\otimes X)^{-1}\big(R_n^Y(\hat Z_n)-R_n^Y(\tilde Z_n)\big)\|_2\\\notag
 &=h_n\|\big(W^{-1}\bar K_n^{-1}W-h_n\Lambda)^{-1}(W\otimes X)^{-1}
 \big(G({\bf z}_n+\hat Z_n)-G({\bf z}_n+\tilde Z_n)\big)\|_2\\\notag
 &\le h_n\theta\|(W\otimes X)^{-1}\|_2L_{g\psi}(\|\hat Y_n-\tilde Y_n\|_2+\|\hat P_n-\tilde P_n\|_2\big)
\\\label{LipY}
 &\le h_nL^Y\ltnorm \hat Z_n-\tilde Z_n\rtnorm,
\end{align}
with $L^Y= L_{g\psi}\theta\, cond_2(X)\|W^{-1}\|_2(\|W\|_2+\|W^\dagger\|_2)$.
In a similar way, a Lipschitz condition $\ltnorm R_n^P(\hat Z_n)-R_n^P(\tilde Z_n)\rtnorm^\dagger\le h_nL^P\rtnorm \hat Z_n-\tilde Z_n\rtnorm$ may be derived with constant $L^P=L_{g\psi}\theta^\dagger\, cond_2(X)\cdot$ $\|(W^\dagger)^{-1}\|_2\big(\|W\|_2+\|W^\dagger\|_2\big)$, where $\theta^\dagger$ is the constant in Lemma~\ref{LKIJ} derived from $(W^\dagger)^{-1}AK^{-1}W^\dagger$.
In order to obtain a Lipschitz condition for the map $\Phi(Z):=\MM^{-1}(R(Z)-\hat\tau)$, we note that
\begin{align}\notag
 \begin{array}{ll}
 (M_{11}^{-1})_{nk}=\Rc_n(h_nJ)\cdots\Rc_{k+1}(h_{k+1}J),&k<n,\\
 (M_{22}^{-1})_{nk}=\Rc_{n+1}^\dagger(h_{n+1}J\T)\cdots\Rc_k^\dagger(h_kJ\T),&k>n,
 \end{array}
\end{align}
for the individual $m\times m$-blocks of $\MM^{-1}$.
Now, by Theorem~\ref{TNStab}, each of these factors is bounded by one in the appropriate norms with two possible exceptions at the boundaries only.
A stiffness-independent bound for these two exceptions is available through Lemma~\ref{LKIJ} as $\ltnorm\Rc_N(h_NJ)\rtnorm\le\theta\ltnorm\bar K^{-1}\rtnorm$ and $\ltnorm\Rc_1^\dagger(h_0J\T)\rtnorm^\dagger\le\theta^\dagger\ltnorm\bar K\mT\rtnorm^\dagger$ since $\ltnorm\bar B(\sigma_n)\rtnorm=\ltnorm\tilde B(\sigma_1)\T\rtnorm^\dagger=1$.
Hence, for small enough stepsizes $h_0,h_N$, the following bounds hold by Theorem~\ref{TNStab} and \eqref{BAP4o33},
\begin{align}\label{MInv}
 \sum_{k=0}^nh_k\ltnorm(M_{11}^{-1})_{nk}\rtnorm\le 2T,\quad
 \sum_{k=n}^N h_k\ltnorm(M_{22}^{-1})_{nk}\rtnorm^\dagger\le 2T,\ 0\le n\le N.
\end{align}
Combining \eqref{LipY} and the corresponding bound for $R^P$ with the estimate \eqref{MInv} results in the global Lipschitz condition
\begin{align*}
 \ltnorm\Phi(\hat Z)-\Phi(\tilde Z)\rtnorm
 \le L_\Phi\ltnorm\hat Z-\tilde Z\rtnorm,
 \quad L_\Phi:=2T\max\{L^Y,L^P\}.
\end{align*}
With an appropriate factor $\chi$ which may be derived from the explicit representations of $L^Y,L^P$ above, assumption \eqref{ExRWP} means contractivity $L_\Phi\le\frac12$, and images $\Phi(\tilde Z)$ of elements $\tilde Z$ from an $\varepsilon$-ball, $\ltnorm\tilde Z\rtnorm\le\varepsilon$, satisfy
\begin{align*}
 \ltnorm\MM^{-1}\big(-\hat\tau+(R(\tilde Z)-R(0)\big)\rtnorm
 \le \,\ltnorm\MM^{-1}\hat\tau\rtnorm+L_\Phi\ltnorm Z\rtnorm
 \le \,\ltnorm\MM^{-1}\hat\tau\rtnorm+\frac12\varepsilon,
\end{align*}
and are contained in the same ball if $ 2\ltnorm\MM^{-1}\hat\tau\rtnorm\le\varepsilon$.
Hence, there exists a unique solution $\check Z=\Phi(\check Z)$ to \eqref{BVPFP} in this ball, which also satisfies $\ltnorm\check Z\rtnorm\le2\ltnorm\MM^{-1}\hat\tau\rtnorm$.
Since the factors $(I-h_n\bar K_n)^{-1},\,(I-h_n\tilde K_n)^{-1}$ in $\hat\tau$ may again be bounded with Lemma~\ref{LKIJ}, the estimates \eqref{MInv} lead to the error bound
\begin{align}\label{GlbFetau}
 \ltnorm\check Z_n\rtnorm\le C\max_{n=0}^Nh_n^{-1}\max\{\|\tau_n^Y\|_2,\|\tau_n^P\|_2\},\,0\le n\le N,
\end{align}
with some constant $C$.
Finally, the representations \eqref{lokfer}, \eqref{lokfe0} and \eqref{lokfead}, \eqref{lokfean} of the local errors yield the assertion \eqref{GlbFeRWP} for $H$ small enough.
\qed\\
\par\noindent
We note that Theorem~\ref{TAdFe} applies to \texttt{AP4o33vgi} with aperture $\eta=17/10\doteq59.5^o$ according to \eqref{BAP4o33}, \eqref{BAP4o33a}.
Still, it shows convergence of order $q-1$ only.
However, based on the stability estimate \eqref{GlbFetau}, the modification trick from Theorem~\ref{Tsupknv} for proving super-convergence of order $q=s-1$ on smooth grids also for the adjoint equation with appropriate assumptions is a simple repetition of arguments and is avoided here.
The order of convergence of the pulcherrima triplet \texttt{AP4o33vgi} is $q=r=3=s-1$ for both the state and the adjoint variables.
\section{Numerical experiments}\label{NumExp}
In the following, we present comparative results for our novel Peer two-step methods of order $3,4,5$, and well-known L-stable stiffly accurate ESDIRK methods 
(singly diagonally implicit Runge-Kutta methods with a first explicit step) with stage order two. We choose three methods
of order $3,4,5$ from \cite{KennedyCarpenter2016} and one recently developed sixth-order method from
\cite{AlamriKetcheson2024}, see Table~\ref{table:meths}. 
ESDIRK methods are very similar in structure to the Peer two-step methods and differ only in their one-step nature.
We select two typical stiff and singularly perturbed benchmark problems from \cite{AlamriKetcheson2024} 
in order to allow direct comparison with the results presented there, as well as the well-known nonlinear 
semi-discretized Burgers problem, which was already used in 
\cite{Verwer1986} for the numerical investigation of diagonally implicit Runge-Kutta methods. Convergence
orders as well as efficiency results, i.e., computing time versus accuracy, are shown.
\par
All calculations have been done with Matlab-Version R2025b
on a Latitude 7280 with an i5-7300U Intel processor at 2.7 GHz.
\begin{table}
\centering
\centerline{\begin{tabular}{|l|c|c|c|c|l|}\hline
  name & $s$ & $p$ & $\hat{s}$ & $\hat{p}$ & source\\\hline
  \texttt{ESDIRK5o3}& 5 & 3 & 5 & 2 & \cite{KennedyCarpenter2016}, Table~10, p.~81\\
  \texttt{ESDIRK6o4}& 6 & 4 & 6 & 3 & \cite{KennedyCarpenter2016}, Table~16, p.~90\\
  \texttt{ESDIRK7o5}& 7 & 5 & 7 & 4 & \cite{KennedyCarpenter2016}, Table~25, p.~100\\
  \texttt{ESDIRK8o6}& 8 & 6 & 8 & 4 & \cite{AlamriKetcheson2024}, see data availability\\
\hline
\end{tabular}}
\parbox{14cm}{
\caption{List of ESDIRK methods tested: number of stages for the advancing
method $s$, order of convergence of the advancing method $p$, number of stages
for the embedded error estimator $\hat{s}$, and order of convergence for the embedded
error estimator $\hat{p}$. All advancing methods are L-stable and the embedded methods 
for error estimation are A-stable.}
\label{table:meths}
}
\end{table}
\subsection{Stiff Prothero-Robinson problem}
The first stiff benchmark problem was proposed by Prothero and Robinson \cite{ProtheroRobinson1974} 
to test stability and accuracy of stiff ODE integrators. As in \cite{AlamriKetcheson2024},
we consider
\begin{align*}
y'(t) =&\;\mu (y(t)-g(t))+g'(t),\quad y(0)=g(0),\quad t\in [0,1],
\end{align*}
with the stiffness parameter $\mu=-10^3$ and the smooth bounded function
$g(t)=\exp(-t)\cos(20t)+\sin(10t)$. It is well known that implicit one-step
methods including ESDIRK methods suffer from order reduction for stepsizes
larger than $-1/\mu$.
\begin{figure}[t!]
\centering
\includegraphics[width=7cm]{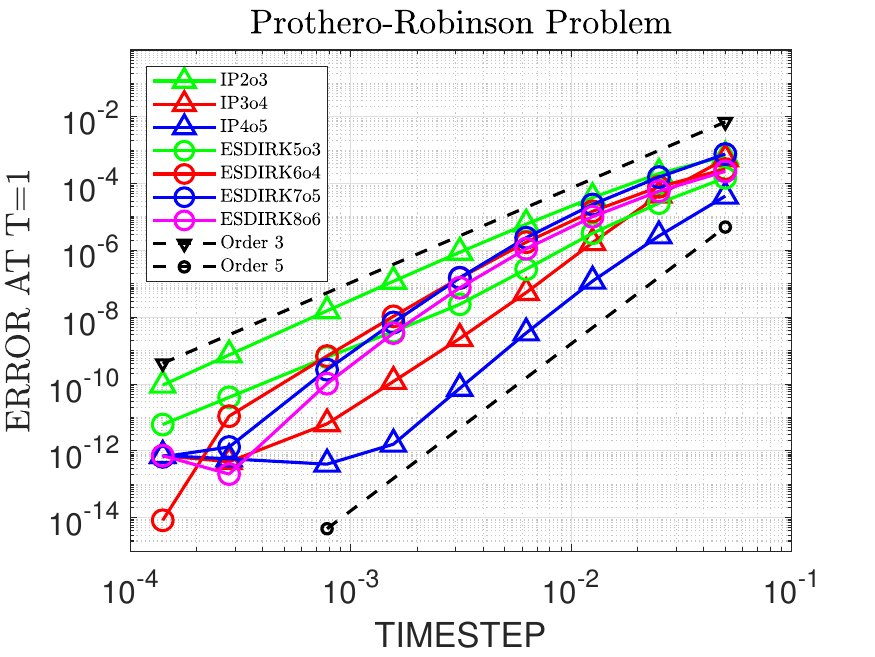}
\hspace{0.1cm}
\includegraphics[width=7cm]{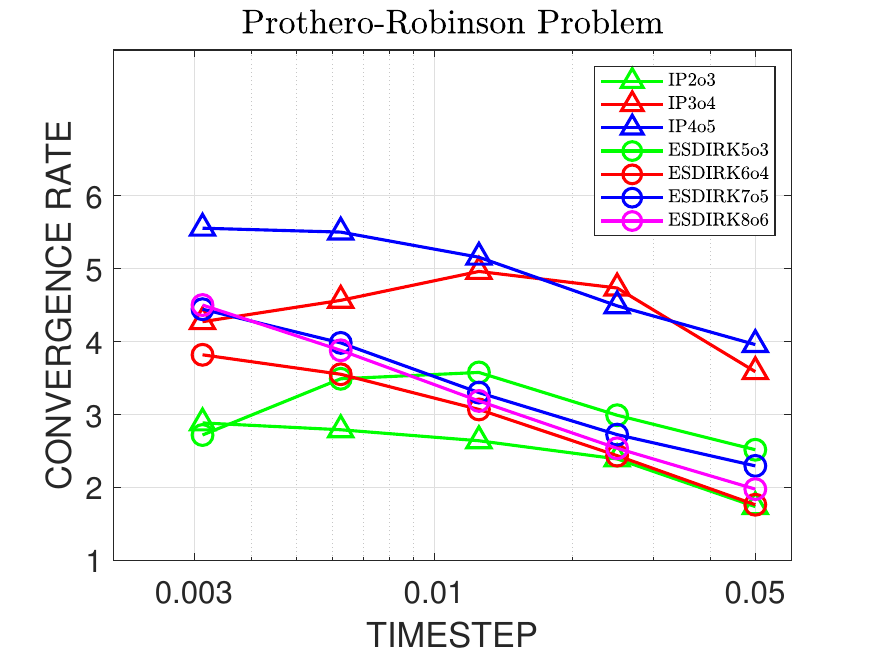}
\parbox{14cm}{
\caption{Prothero-Robinson problem with $\mu=-10^3$: Global errors at 
final time $T=1$ (left) and convergence rates (right) for all methods 
applied with uniform stepsizes.}
\label{fig:protrob}
}
\end{figure}
\par
In Figure~\ref{fig:protrob}, we present global errors at the final time 
$T=1$ for uniform stepsizes $h=2^{-i}/10$, $i=1,\ldots,9$, and convergence rates 
for stepsizes in 
the critical region $h>10^{-3}$. For the Peer methods, the observed numerical
order is close to their design order, whereas a serious order reduction is
visible for all ESDIRK methods, especially for larger time steps. These methods 
behave quite similar in terms of efficiency -- a fact which was already observed 
in \cite{AlamriKetcheson2024} in the case of order reduction. The relatively large
error constant of \texttt{IP2o3} is clearly seen. Both higher-order Peer methods
\texttt{IP3o4} and \texttt{IP4o5} perform considerably better than all ESDIRK
methods before they get affected by round-off errors for $h<10^{-3}$.
\subsection{Singularly perturbed van der Pol system}\label{num:vdp}
The second problem is the famous van der Pol system - a common test example 
that exposes any shortcomings of stiff ODE integrators. Mastering this challenging 
benchmark is essential for designing efficient methods for general singularly perturbed 
problems. We consider
\begin{align}
\label{prob:vanderpol1}
y_1'(t) =&\;y_2(t),\\
\label{prob:vanderpol2}
y_2'(t) =&\;\frac{1}{\varepsilon}\left( (1-y_1(t)^2)y_2(t)-y_1(t)\right),\quad
t\in [0,0.5],
\end{align}
with the initial conditions
\begin{align*}
y_1(0)=&\;2,\quad y_2(t)=-\frac{2}{3}+\frac{10}{81}\varepsilon
-\frac{292}{2187}\varepsilon^2-\frac{1814}{19683}\varepsilon^3+O(\varepsilon^4)\,.
\end{align*}
and $0<\varepsilon\ll 1$, see \cite[Section VI.3]{HairerWanner1996} and \cite{Boscarino2007}
for a detailed analysis and numerical experiments. As $\varepsilon$ approaches zero, the 
stiffness of the system increases. When $\varepsilon=0$, the system degenerates into a semi-explicit 
index-1 DAE. Since no analytic solution is available,
reference solutions are computed with {\sc Matlab}'s \texttt{ode14s} solver setting absolute and relative 
tolerances to $10^{-14}$.
\begin{figure}[t!]
\centering
\includegraphics[width=7cm]{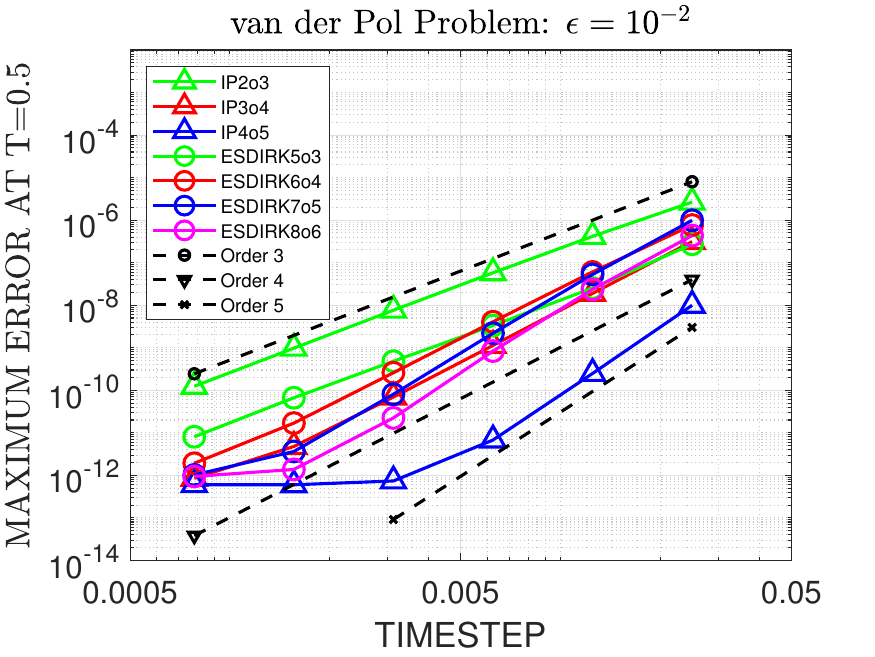}\\
\includegraphics[width=7cm]{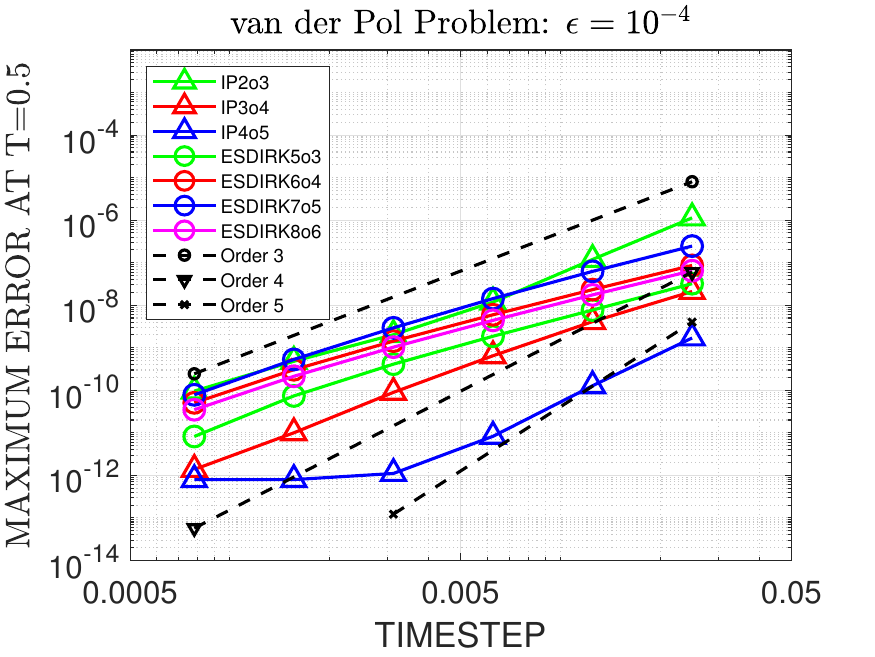}
\hspace{0.1cm}
\includegraphics[width=7cm]{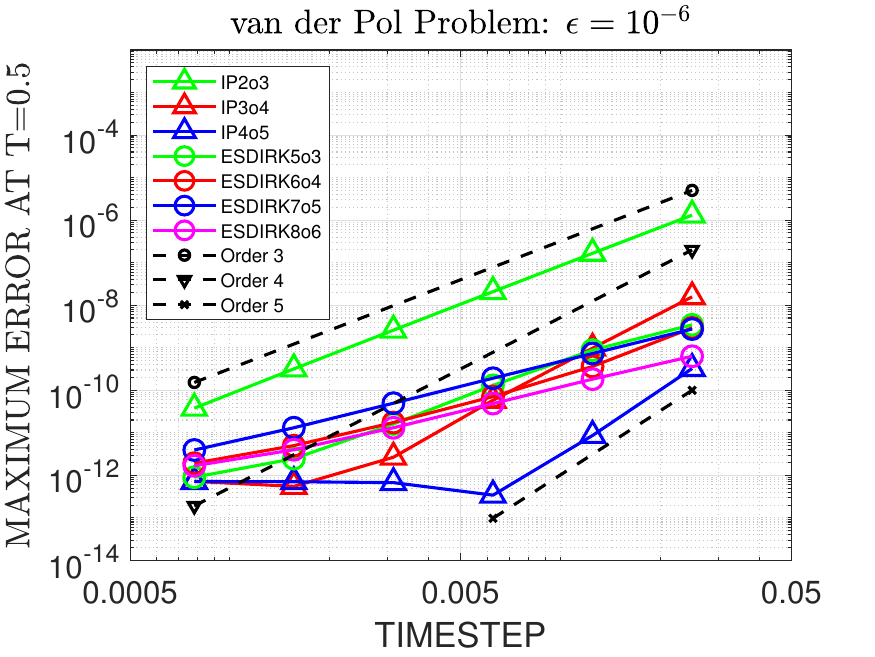}
\parbox{14cm}{
\caption{Van der Pol System: Global errors at final time $T=0.5$ for $\varepsilon=10^{-2},10^{-4},10^{-6}$.}
\label{fig:vanderpol}
}
\end{figure}
\par
Global errors at the final time $T=0.5$ are plotted in Figure~\ref{fig:vanderpol} for stiffness parameters
$\varepsilon=10^{-2},10^{-4},10^{-6}$, employing uniform stepsizes $h=2^{-i}/20$, $i=1,\ldots,6$. For
moderate stiffness $\varepsilon=10^{-2}$, all methods show convergence rates close to their classical order.
In the stiff regime, i.e. $\varepsilon\ll h$, the ESDIRK methods suffer from order reduction and  
convergence rates decrease to their stage order two. This is mostly due to the insufficient resolution of
the emergent algebraic component $y_2$, see \cite[Section VI.3, Corollary 3.10]{HairerWanner1996} and
\cite{AlamriKetcheson2024,Boscarino2007}. The
Peer methods roughly lose one order of convergence for $\varepsilon=10^{-4}$, but regain their full order
at $\varepsilon=10^{-6}$. Clearly, \texttt{IP4o5} performs by far best across all stiffness levels.
\par
Next, we apply all methods in a variable-stepsize implementation, using estimators for the local errors along with a
stepsize selection algorithm. Following again \cite{AlamriKetcheson2024}, we set $\varepsilon=10^{-5}$ and
integrate the system \eqref{prob:vanderpol1}-\eqref{prob:vanderpol2} over $[0,2]$. The error estimators are
defined by
\begin{align}
\label{est:esdirk}
est^{(n)} =&\; y_n-\hat{y}_n,&\text{ for ESDIRK methods},\\
\label{est:peer}
est^{(n)} =&\; C_eh_n\sum_{i=1}^{s}\sigma_n^{s-1}(s-1)!e_s^T(P_sV_s)^{-1}F(Y_{n-1,i}),&\text{ for Peer methods}.
\end{align} 
Here, $y_n$, $\hat{y}_n$ are the advancing and embedded Runge-Kutta approximations, respectively, with order $\hat{p}<p$, and $e_s\T=(0,\ldots,0,1)\in\R^s$.
Replacing $Y_{n-1}$ in \eqref{est:peer} by exact solutions ${\bf y}_{n-1}$, 
Taylor expansion yields $est^{(n)}=C_eh_n^sy^{(s)}(t_n)+O(h_n^{s+1})$, which mimics the leading error term of an embedded solution of order $s-1$. As an averaged error constant, $C_e=10^{-3}$ is used. We work with scaled local error estimators in the weighted $l_2$-metric,
\begin{align*}
w_n :=&\;\sqrt{\frac{1}{m}\sum_{i=1}^{m} \left( \frac{est^{(n)}_i}{rtol\cdot scal\,Y_i^{(n)}+atol}\right)^2},
\end{align*}
where $m=2$ for the van der Pol system, $scal\,Y^{(n)}_i=\max(|y_{n,i}|,|\hat{y}_{n,i}|)$ for ESDIRK methods, 
and $scal\,Y^{(n)}_i=|Y_{n-1,s,i}|$ for Peer methods.
For the Runge-Kutta schemes, a Proportional-Integral controller PI42 is used as in \cite{AlamriKetcheson2024},
resulting in a new stepsize proposal
\begin{align*}
\hat{h}_{n+1} =&\;w_{n}^{-0.6/(\hat{p}+1)}w_{n-1}^{0.2/(\hat{p}+1)}h_n.
\end{align*}
If $w_{n}<1$, the current step is accepted and the computation is continued with an adapted 
time step $h_{n+1}=\min(f_{max}h_n,f_{save}\hat{h}_{n+1})$. We chose $f_{max}=1.5$ and $f_{save}=0.95$ except for 
\texttt{ESDIRK8o6} where the larger
$f_{save}=0.99$ yields much better results. Otherwise the step is rejected and repeated with the classical P-controller, i.e.,
\begin{align}\label{pCtr}
h_{n}^{new}:=&\;\min(f_{max},\max(f_{min},f_{save}w_n^{-1/(\hat{p}+1)}))h_n, 
\end{align}
where we chose $f_{min}=0.6$ and $f_{save}=0.95$ for all methods.
We recall that the embedded Runge-Kutta solutions $\hat{y}_n$ are only A-stable, hence, the error
estimators, too. Consequently, the configuration of the stepsize
controller to be applied in very stiff regimes has to be done with special care.
\par
For Peer methods, we only employ the classical P-controller \eqref{pCtr} with $\hat{p}=s-1$, $f_{save}=0.95$, and $f_{min}=0.8$. 
According to the properties collected in Table~\ref{TPT}, we individually set $f_{max}=\bar\sigma$. If $w_n<1$, we proceed with
a new step $h_{n+1}:=h_n^{new}$, otherwise we repeat the step with $h_n^{new}$.
\par
In Figure~\ref{fig:vanderpol}, we show results of a couple of runs with 
$atol=rtol=10^{-i}$, $i=6,\ldots,10$, and compute weighted root mean-squared errors at $T=2$ from
\begin{align}\label{rms}
err_m :=&\;\sqrt{ \frac{1}{m}\sum_{i=1}^m \left( 
\frac{y_{ref,i}-y_{h,i}}{|y_{ref,i}|+1} \right)^2},\; m=2,
\end{align}
where $y_{ref}$ is the computed reference solution and $y_h$ is the corresponding numerical approximation 
delivered by the methods tested. The initial step is $h_0=10^{-2}$. As measure of efficiency, we show 
accuracy versus computing time in Figure~\ref{fig:vanderpol_cpu}. As already observed in 
\cite{AlamriKetcheson2024}, we can also state that the higher-order ESDIRK methods are less efficient here 
than the lower order ones due to order reduction. The higher-order Peer methods perform superior, especially
\texttt{IP5o4} for sharper tolerances, whereas \texttt{IP2o3} is not competitive in this field of methods.
\begin{figure}[t!]
\centering
\includegraphics[width=8cm]{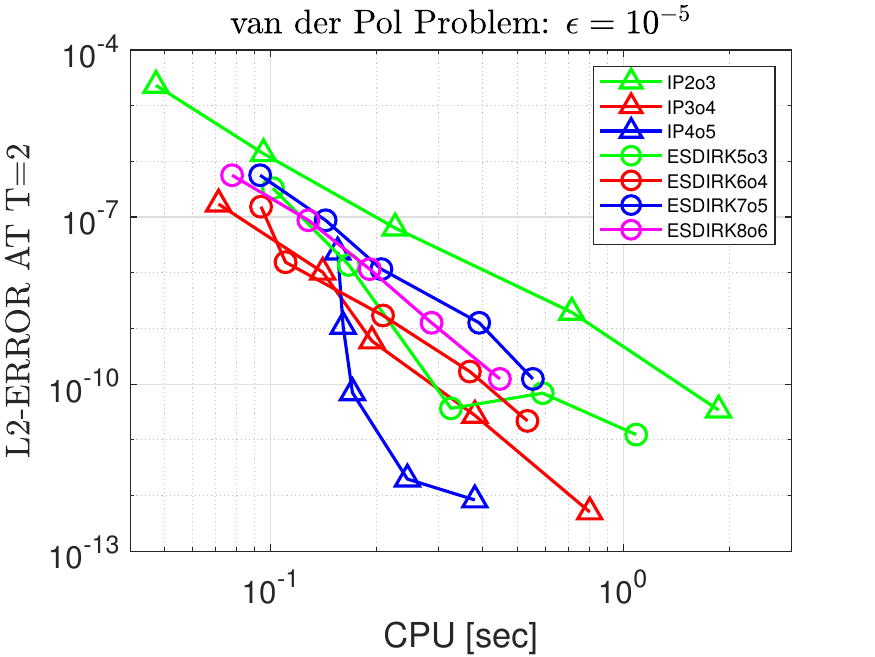}
\parbox{14cm}{
\caption{Van der Pol Problem: Weighted root mean-squared errors $err_2$ defined in \eqref{rms} at the final time $T=2$ 
versus computing time for variable stepsizes with adaptive local error control.}
\label{fig:vanderpol_cpu}
}
\end{figure}
\subsection{Stiff semi-discretized Burgers problem}
As a larger system of stiff ODEs, we consider the semi-discretized Burgers equation which arises 
from a method of lines discretization of the one-dimensional nonlinear convection-diffusion equation 
\begin{align*}
\partial_t u(x,t) =&\; \nu\,\partial_{xx} u - u\partial_xu, \quad (x,t)\in [0,1]\times [0,1].
\end{align*}
We use $\nu=10^{-1}$ and define initial and Dirichlet boundary values from the exact solution given
in \cite[Chapter 4]{Whitham1974},
\begin{align*}
u(x,t) =&\; 1 - 0.9\frac{r_1}{r_1+r_2+r_3}-0.5\frac{r_2}{r_1+r_2+r_3},\\[2mm]
r_1=\exp\left(-\frac{x-0.5}{20\nu}-\frac{99t}{400\nu}\right),\;
r_2=&\;\exp\left(-\frac{x-0.5}{4\nu}-\frac{3t}{16\nu}\right),\;
r_3=\exp\left(-\frac{x-0.375}{2\nu}\right).
\end{align*}
Standard fourth-order central and one-sided finite differences are applied on a uniform grid 
with $M=800$ spatial points. Burgers equation is renowned for its simplicity and ability to 
encapsulate essential phenomena in fluid dynamics and has become a cornerstone in testing
numerical schemes. In Figure \ref{fig:burgers}, we present results from a convergence study 
with uniform time steps $h=2^{-i}/10$, $i=1,\ldots,4$, and from a efficiency test with  
adaptive local error control based on the variable-stepsize implementation described in
Section~\ref{num:vdp}. For uniform stepsizes, all ESDIRK methods show remarkably similar performance and 
demonstrate a clear third-order convergence rate in the maximum norm, exceeding their stage 
order of two by one order. Surprisingly, this is also true for the $l_2$-norm. Hence,
here ESDIRK methods perform better than DIRK methods which reach the fractional order $2.25$ only,
see \cite[Table 5.1]{Verwer1986} and the theoretical discussion in \cite{LubichOstermann1995}. 
No order reduction is observed for the Peer methods, 
whereas \texttt{IP4o5} reaches the level of spatial accuracy $O(10^{-12})$ quite quickly.
\begin{figure}[t!]
\centering
\includegraphics[width=7cm]{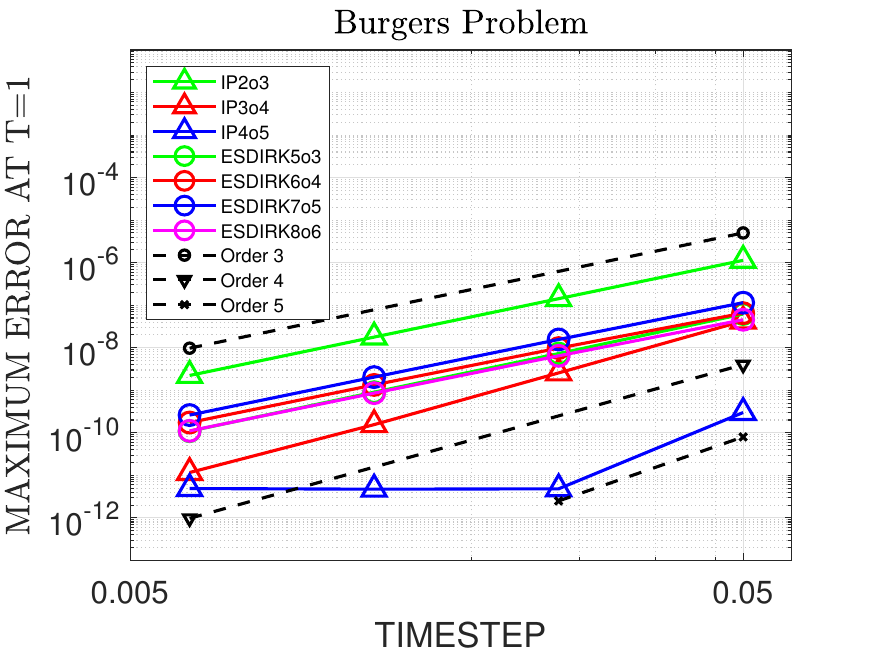}\hspace{0.1cm}
\includegraphics[width=7cm]{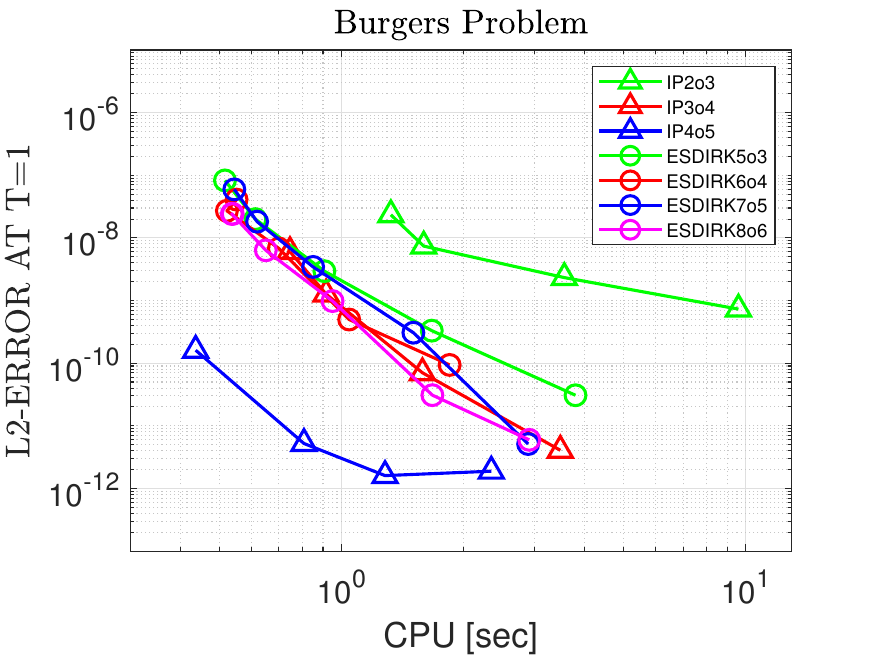}
\parbox{14cm}{
\caption{Burgers Problem with $\nu=10^{-1}$ and $M=800$: Global maximum errors for uniform 
timesteps (left) and weighted root mean-squared errors at final time $T=1$ versus computing 
time for variable stepsizes with adaptive local error control (right).
}
\label{fig:burgers}
}
\end{figure}
\par
Next, we perform a couple of adaptive runs with $atol=rtol=10^{-i}$, $i=8,\ldots,11$ and compute weighted root mean-squared errors at $T=1$ from \eqref{rms} with $m=800$.
The initial step is $h_0=10^{-3}$. From Figure~\ref{fig:burgers} (right), we can draw the following 
conclusions: \texttt{IP2o3} performs less efficiently, whereas \texttt{IP4o5} is by far the most
efficient method, outperforming the others by remarkable three orders of magnitudes. 
The computing times of ESDIRK methods
are very close to each other with the exception of \texttt{ESDIRK3o5} that is less efficient for
sharper tolerances. \texttt{IP3o4} compensates for the greater computational effort required to solve 
the nonlinear stage equations - three LU decompositions instead of one, as in other ESDIRK methods - 
by achieving a higher fourth-order convergence rate.
\section{Conclusion}\label{Concl}
This paper establishes an a priori stability framework for certain variable-stepsize $L(\alpha)$-stable Peer two-step methods applied to stiff semi-linear systems, providing rigorous and uniform stability bounds on variable grids.
The theory applies to a class of Peer methods with a special grid-independent eigenstructure, and we hereby show that $L(\alpha)$-stable methods can achieve order $s+1$ with only $s$ stages, surpassing the traditional stage-order barrier while supporting efficient starting and iteration procedures.
The novel proof technique also applies to
boundary value problems arising in optimal control problems constrained by stiff ODEs and is used to
establish stiff norm estimates for the recently developed extraordinary self-adjoint pulcherrima triplet \texttt{AP4o33vgi}. Numerical experiments on standard stiff benchmark problems confirm the practicality and competitiveness of the proposed novel high-order methods
\texttt{IP3o4} and \texttt{IP4o5}, where especially the latter one clearly outperforms highly respected
and well-established ESDIRK methods.
\vspace{0.5cm}
\par
\noindent {\bf Acknowledgements.}
The first author is supported by the Deutsche Forschungsgemeinschaft
(German Research Foundation) within the collaborative research center
TRR154 {\em ``Mathemati\-cal modeling, simulation and optimisation using
the example of gas networks''} (Project-ID 239904186, TRR154/3-2022, TP B01).
\appendix
\section{Coefficients of the Peer methods}
For the new initial value Peer methods with $s$ stages, denoted by \texttt{IP}$(s)$\texttt{o}$(s+1)$, the node vector  $\cc=(c_1,\ldots,c_s)\T$ and the lower triangular coefficient matrix $K(\sigma)$ will be displayed, the other coefficient matrix $B(\sigma)=\bar B(\sigma)$ may be computed by
\begin{align*}
  B(\sigma)=\big(V_s-K(\sigma)V_s\tilde E_s\big)S(\sigma)\PP_sV_s^{-1},
\end{align*}
where $V_s=(\eins,\cc,\cc^2,\ldots,\cc^{s-1})$ is the Vandermonde matrix, $\tilde E_s=\big(i\delta_{i,j-1}\big),\PP=\big({j-1 \choose i-1}\big)=\exp(\tilde E_s)\in\R^{s\times s}$.
In the step through $[t_n,t_{n+1}]$, $n\ge1$, the coefficients $K_n=K(\sigma_n)$, $B_n=B(\sigma_n)$ depend on the stepsize ratio $\sigma_n=h_n/h_{n-1}$, which is also used in the matrix $S_n=S(\sigma_n)=\diag(1,\sigma_n,\ldots,\sigma_n^{s-1})$.
\par
In addition, the coefficient matrices $A_0$ for the starting step \eqref{Startab} are presented and the diagonals of $\tilde A_0$ to be used in the iterative solution \eqref{Nwtit}.
The sub-diagonals of $A_0$ and $\tilde A_0$ are identical.
\subsection{Coefficients of \texttt{IP2o3}}\label{AIP2o3}
\[ 
\cc=\begin{pmatrix}\frac13\\[1mm]
1\end{pmatrix},
\  K(\sigma)=\begin{pmatrix}
\frac{2+\sigma}{6(1+\sigma)}&0\\[1mm]
\frac34&\frac14
\end{pmatrix},
\ A_0=\begin{pmatrix}6&0\\[1mm]
0&2\end{pmatrix}=\tilde A_0.
\] 
\subsection{Coefficients of \texttt{IP3o4}}\label{AIP3o4}
$\cc\T=\big(\frac19,\frac7{12},1\big)$,
$\diag(\tilde A_0)=\big(16,\frac{17}3,\frac{13}3\big)$,
\[ K(\sigma)=\begin{pmatrix}
 \frac{120+47\sigma+4\sigma^2}{18(60+47\sigma+6\sigma^2)}&0&0\\[1mm]
  \frac{2080+1645\sigma-441\sigma^2}{96(60+47\sigma+6\sigma^2)}&\frac29&0\\[1mm]
  \frac{81}{272}&\frac{48}{85}&\frac{11}{80}
\end{pmatrix},
\ A_0=\begin{pmatrix}
  \frac{270}{17}& \frac{64}{833}& 0\\[1mm]
  -\frac{3969}{68}& \frac{660}{119}& 0\\[1mm]
  -\frac{81}{17}& -\frac{5568}{833}& \frac{13}3
\end{pmatrix}.
\] 
\subsection{Coefficients of \texttt{IP4o5}}\label{AIP4o5}
$\cc\T=\big(\frac16,\frac37,\frac{31}{50},1\big)$,
\[ K(\sigma)\doteq\begin{pmatrix}
  \kappa_{11}&0&0&0\\
  \kappa_{21}&0.1034482758620690&0&0\\
  \kappa_{31}&0.1034482758620690&0.1333333333333333&0\\
  \kappa_{41}&-0.1357417458163727&0.6016742910832833&0.1153508771929825
\end{pmatrix},
\]
\begin{align*}
 q:=&\;1+1.86052631578947\sigma+0.821929824561404\sigma^2+0.10233918128655\sigma^3,\\
q\kappa_{11}=&\;0.166666666666667+0.155043859649123\sigma+0.0456627680311891\sigma^2\\
 &\;+0.00426413255360624\sigma^3,\\
q\kappa_{21}=&\;0.32512315270936+0.530269639616282\sigma+0.21417936114036\sigma^2\\
 &\;+0.00642880953262819\sigma^3,\\
q\kappa_{31}=&\;0.383218390804598+0.727819564428312\sigma+0.271886713848661\sigma^2\\
 &-0.0658600984925368\sigma^3,
\\ \kappa_{41}=&\;0.4187165775401070\,.
\end{align*}
\[ A_0\doteq\begin{pmatrix}
   8.60951871657754& -0.0080439553076369& 0.3529071197439584&  -0.04\\
 -17.18372482682913& 6.667819547538599& 0.02782463527251044& a_{24}^{(0)}\\
  14.41089625668449& -14.79708101914669& 8.94258203382305& 0.12\\
 -11.91176470588235& 25.13484660033167& -23.01826775408627& 6.5625
\end{pmatrix},\]
\begin{align*}
a_{24}^{(0)}=&\;0.09907120743034056,\\
\diag(\tilde A_0)\doteq&\;
(8.691082376542441,6.362899934889681,9.990214081789681,6.610685774659016).
\end{align*}

\bibliographystyle{plain}
\bibliography{bibpeeropt}

\end{document}